\documentclass[a4wide,11pt,reqno]{article}
\usepackage{bm}
\usepackage{mathrsfs}
\usepackage{fullpage}
\usepackage{enumerate}
\usepackage[table]{xcolor}
\usepackage[utf8]{inputenc}
\inputencoding{latin1}
\textwidth 430pt \textheight 600pt
\textheight=8.9in \textwidth=6.2in \oddsidemargin=0.25cm
\evensidemargin=0.25cm \topmargin=-.5cm

\usepackage{amssymb, amsmath, amsfonts}
\usepackage{mathrsfs}
\usepackage[colorlinks]{hyperref}
\usepackage{epsfig,amsbsy,amsthm} % "amsbsy" produce bold math symbol
\usepackage{tikz}
\usetikzlibrary{patterns,calc,decorations.markings}
\usepackage{graphics}
\usepackage{caption}
\usepackage{subcaption}
\usepackage{float,epsfig}
\usepackage{fixmath}
\usepackage{graphics}
\usepackage{pgfpages}
\usepackage{appendix}
\renewcommand{\arraystretch}{3}
\numberwithin{equation}{section}  %It gives the eq. no in fraction format.
\usepackage{graphicx}% the above two gives link
\usepackage{pst-all}
\usepackage{calligra}
\usepackage{tikz}
\usepackage{calligra}
\usepackage{color}
\usepackage{tikz}
\usepackage{cleveref}
\usepackage[utf8]{inputenc}
%\usepackage[autostyle]{csquotes}
%%%%%%%%%%%%%%%%%%%%%%%%%%%%%%%
%\usepackage[margin=5cm]{geometry}
\usepackage{algorithm}
%\RestyleAlgo{ruled}
\usepackage{multirow}
\usepackage{array}
\newcolumntype{H}{>{\setbox0=\hbox\bgroup}c<{\egroup}@{}}
\usepackage{accents}
%%%%%%%%%%%%%%%%%%%%%%%%%%%%%%%%%%%%%
\DeclareMathAlphabet{\mathpzc}{OT1}{pzc}{m}{it}
\DeclareMathAlphabet{\mathcalligra}{T1}{calligra}{m}{n}
\DeclareMathAlphabet{\mathpzc}{OT1}{pzc}{m}{it}
\DeclareMathAlphabet{\mathcalligra}{T1}{calligra}{m}{n}
\DeclareMathAlphabet{\mathpzc}{OT1}{pzc}{m}{it}
\DeclareMathAlphabet{\mathcalligra}{T1}{calligra}{m}{n}
%\usepackage[T4]{fontenc}
%%%%%%%%%%%%%%%%%%%%%%%%%%%%%%%%%%%%%%%%%%
\begin{document}
\newtheorem{theorem}{\bf Theorem}[section]
\newtheorem{proposition}[theorem]{\bf Proposition}
\newtheorem{definition}{\bf Definition}[section]
\newtheorem{corollary}[theorem]{\bf Corollary}
\newtheorem{exam}[theorem]{\bf Example}
\newtheorem{remark}[theorem]{\bf Remark}
\newtheorem{lemma}[theorem]{\bf Lemma}
\newtheorem{assum}[theorem]{\bf Assumption}
%%%%%%%%%%%%%%%%%%%%%%%%%%%%%%%%%%%%%%%%%%%%%
\newcommand{\von}{\vskip 1ex}
\newcommand{\vone}{\vskip 2ex}
\newcommand{\vtwo}{\vskip 4ex}
\newcommand{\ds}{\displaystyle}
\def \noin{\noindent}
%%%%%%%%%%%%%%%%%%%%%%%%%%%%%%%%%%%%%%%%%%%%%%
\newcommand{\be}{\begin{equation}}
\newcommand{\ee}{\end{equation}}
\newcommand{\beno}{\begin{equation*}}
\newcommand{\eeno}{\end{equation*}}
\newcommand{\ba}{\begin{align}}
\newcommand{\ea}{\end{align}}
\newcommand{\bano}{\begin{align*}}
\newcommand{\eano}{\end{align*}}
\newcommand{\bea}{\begin{eqnarray}}
\newcommand{\eea}{\end{eqnarray}}
\newcommand{\beano}{\begin{eqnarray*}}
\newcommand{\eeano}{\end{eqnarray*}}
\newcommand{\chatv}{\hatv{c}}
\def \noin{\noindent}
\def\arraystretch{1.3}
%%%%%%%%%%%%%%%%%%%%%%%%%%% new commands %%%%%%%%%
%%%%%%%%%%%%%%%%%%%%%%%%%%%%%%%%%%%%%%%%%%%%%%%%%%%
\def \tcK{{\tilde {\mathcal K}}}    
\def \O{{\Omega}}
\def \cT{{\mathcal T}}
\def \cV{{\mathcal V}}
\def \cE{{\mathcal E}}
\def \R{{\mathbb R}}
\def \V{{\mathbb V}}
\def \S{{\mathbb S}}
\def \N{{\mathbb N}}
\def \Z{{\mathbb Z}}
\def \Mc{{\mathcal M}}
\def \Cc{{\mathcal C}}
\def \Rc{{\mathcal R}}
\def \Ec{{\mathcal E}}
\def \Gc{{\mathcal G}}
\def \Tc{{\mathcal T}}
\def \Qc{{\mathcal Q}}
\def \Ic{{\mathcal I}}
\def \Pc{{\mathcal P}}
\def \Oc{{\mathcal O}}
\def \Uc{{\mathcal U}}
\def \Yc{{\mathcal Y}}
\def \Ac{{\mathcal A}}
\def \Bc{{\mathcal B}}
\def \k{\mathpzc{k}}
\def \Rp{\mathpzc{R}}
\def \Os{\mathscr{O}}
\def \Js{\mathscr{J}}
\def \Es{\mathscr{E}}
\def \Fs{\mathscr{F}}
\def \Qs{\mathscr{Q}}
\def \Ss{\mathscr{S}}
\def \Cs{\mathscr{C}}
\def \Ds{\mathscr{D}}
\def \Ms{\mathscr{M}}
\def \Ts{\mathscr{T}}
\def \LL{L^{\infty}(L^{2}(\Omega))}
\def \LH{L^{2}(0,T;H^{1}(\Omega))}
\def \B {\mathrm{BDF}}
\def \el {\mathrm{el}}
\def \re {\mathrm{re}}
\def \e {\mathrm{e}}
\def \div {\mathrm{div}}
\def \CN {\mathrm{CN}}
\def \Rs   {\mathbf{R}_{{\mathrm es}}}
\def \Rb {\mathbf{R}}
\def \Jb {\mathbf{J}}
\def  \apos {\emph{a posteriori~}}

\def\mean#1{\left\{\hskip -5pt\left\{#1\right\}\hskip -5pt\right\}}
\def\jump#1{\left[\hskip -3.5pt\left[#1\right]\hskip -3.5pt\right]}
\def\smean#1{\{\hskip -3pt\{#1\}\hskip -3pt\}}
\def\sjump#1{[\hskip -1.5pt[#1]\hskip -1.5pt]}
\def\jumptwo{\jump{\frac{\p^2 u_h}{\p n^2}}}
%%%%%%%%%%%%%%%%%%%%%%%%%%%%%%%%%%%%%%%%%%%%%%%%%%%%%%%%%%%%%%%%%%%%%%%%%%%
%%%%%%%%%%%%%%%%%%%%%%%%%%%%%%%%%%%%%%%%%%%%%%%%%%%%%%%%%%%%%%%%%%%%%%%%%%%%%%%%%%%%%%%%%
\title{An $hp$-adaptive discontinuous Galerkin discretization of a static anti-plane shear crack model}
\author{Ram Manohar\thanks{Department of Mathematics \& Statistics, Texas A\&M University-Corpus Christi, TX- 78412, USA \tt{(ram.manohar@tamucc.edu)}.} 
~~ and ~~S. M. Mallikarjunaiah\thanks{Department of Mathematics \& Statistics, Texas A\&M University-Corpus Christi, TX- 78412, USA \tt{(M.Muddamallappa@tamucc.edu)}.}}

\date{}
\maketitle
\textbf{Abstract.}{{We propose an $hp$-adaptive discontinuous Galerkin finite element method (DGFEM) to approximate the solution of a static crack boundary value problem. The mathematical model describes the behavior of a geometrically linear strain-limiting elastic body. The compatibility condition for the physical variables, along with a specific algebraically nonlinear constitutive relationship, leads to a second-order quasi-linear elliptic boundary value problem. We demonstrate the existence of a unique discrete solution using Ritz representation theory across the entire range of modeling parameters. Additionally, we derive a priori error estimates for the DGFEM, which are computable and, importantly, expressed in terms of natural energy and $L^2$-norms. Numerical examples showcase the performance of the proposed method in the context of a manufactured solution and a non-convex domain containing an edge crack. }
~\\ 
		
\textbf{Key words.} $hp$-discontinuous Galerkin finite element methods; existence of unique discrete solution; apriori error estimate; Strain limiting nonlinear model; quasilinear PDEs.
	
\vspace{.1in}
\textbf{AMS subject classifications.} $65\mathrm{N}12$, $65\mathrm{N}15$, $65\mathrm{N}22$, $65\mathrm{N}30$.

\section{Introduction}
\subsection{Formulation of the model problem.}
Let  $\Omega \subset \mathbb{R}^{d}$, $d\geq 2$, be the bounded polygonal domain with Lipschitz boundary $\Gamma:=\partial \Omega$. Assume that the $(d-1)$-dimensional measure of boundary $\Gamma$ is positive. Let $\Omega$ be occupied by an elastic body whose response is defined by a special nonlinear constitutive relation between Cauchy stress tensor $\boldsymbol{T} \colon \Omega \to \mathbb{R}^{d \times d}_{sym}$ and linearized elasticity tensor $\boldsymbol{\epsilon} \colon \Omega \to \mathbb{R}^{d \times d}_{sym}$. The constitutive class of relations within the setting of \textit{strain-limiting theories of elasticity} is of the form \cite{rajagopal2007elasticity}: 
\begin{equation}\label{model1}
\mathcal{F}(\boldsymbol{T}, \; \boldsymbol{B}) = \boldsymbol{0},
\end{equation}
where $\mathcal{F} \colon \mathbb{R}^{d \times d}_{sym} \times \mathbb{R}^{d \times d}_{sym} \to \mathbb{R}^{d \times d}_{sym}$ is  a nonlinear, tensor-valued function. By performing a linearization under the assumption of small displacement gradients, one can arrive at nonlinear constitutive relations such as \cite{mallikarjunaiah2015direct}: 
\begin{equation}\label{model2}
\boldsymbol{\epsilon} :=\mathcal{F}(\boldsymbol{T}).
\end{equation}
The system of equations governing the material behavior is: 
\begin{align}\label{model_1}
\begin{cases}
\div \, \boldsymbol{T} = \boldsymbol{f}, \quad \boldsymbol{T}=\boldsymbol{T}^T & \mbox{in} \quad \Omega,  \\
\boldsymbol{\epsilon} :=\mathcal{F}(\boldsymbol{T})  & \mbox{in} \quad \Omega. 
\end{cases}
\end{align}
The first part of the first equation in Eq-\eqref{model_1} is the \textit{balance of linear momentum}, and the second part of the first equation implies that the stress tensor is \textit{symmetric} under no body couples. For the case of anti-plane shear loading, the displacement vector has only one nonzero component (i.e., in $z$-direction). Hence, stress and strains are planar with only a nonzero third component. Let $u(x,y)$ be the Airy's stress function which satisfies
\begin{equation}\label{Tcompo}
\boldsymbol{T}_{13} = \dfrac{\partial u}{\partial y}, \quad \boldsymbol{T}_{23} = - \dfrac{\partial u}{\partial x},
\end{equation}
therefore, the equilibrium equation is automatically satisfied. Then, the compatibility condition is reduced to 
\begin{equation}\label{sc}
\dfrac{ \partial \boldsymbol{\epsilon}_{13} }{\partial y} - \dfrac{ \partial \boldsymbol{\epsilon}_{23} }{\partial x} =0. 
\end{equation}
The current study is devoted to analyzing a special subclass of the above general class of models 
In the current study, we consider a material model in which the linearized strain is a nonlinear function of stress via
\begin{equation}\label{model_2}
\boldsymbol{\epsilon} := \dfrac{\boldsymbol{T}}{2\mu \left( 1 + \beta^{\alpha} \, |\boldsymbol{T}|^{\alpha}     \right)^{1/\alpha}}. 
\end{equation}
$\beta \geq 0$ and $\alpha >0$ are the modeling parameters, and $\mu>0$ is the shear modulus. A similar constitutive class has been analyzed in previous studies \cite{mallikarjunaiah2015direct,vasilyeva2024generalized,kulvait2013,gou2023MMS,HCY_SMM_MMS2022,gou2023computational,yoon2022finite,yoon2021quasi}. Then, substituting \eqref{Tcompo} into \eqref{model_2} and then using the components of the strains in the compatibility condition \eqref{sc} yield a second-order quasi-linear elliptic partial differential equation.  Then, to study the crack-tip fields in an isotropic, homogeneous,  strain-limiting solid under anti-pane shear loading (or Model-III), we need to solve the following boundary value problem: 

To find the Airy stress potential funciton $u(\boldsymbol{x}) \colon \Omega \to \mathbb{R}$ that satisfies the following boundary value problem:
\begin{subequations}
	\begin{eqnarray}
		-\nabla \cdot \Big(\mathscr{G}(x,|\nabla u|)\nabla u\Big)&=&f \quad ~~~~~\text { in }~ \Omega, \label{contmodel}\\
		u &=& g \quad ~~~~ \text { on }~ \Gamma, \label{dbdry}	
	\end{eqnarray}
\end{subequations}
for fixed $\mu > 0$, $\alpha > 0$, and the function $\mathscr{G}(x, |\nabla u|)$ is defined as:
\begin{equation}
\mathscr{G}(x, |\nabla u|):=\frac{1}{ 2 \mu ( 1+\beta^\alpha |\nabla u |^\alpha )^{1/ \alpha}}.  \label{defH}
\end{equation}
We will later specify the ranges for $\alpha$ and $\beta$. Additionally, we assume that there exist positive constants $C_{B_1}$ and $C_{B_2}$ such that, for every $s\in \mathbb{R}_{+}\cup \{0\}$:
\begin{eqnarray}
|\mathscr{G}(s)|\leq C_{B_1} \quad \text{and}
\quad |\bar{\mathscr{G}}_s(\xi)|\leq C_{B_2}. \label{hbdd}
\end{eqnarray}
One can obtain an estimate based on the Mean Value Property 
\begin{eqnarray}
\mathscr{G}(\zeta)-\mathscr{G}(s)~=~(\zeta-s)\int_0^1\mathscr{G}_s(t\zeta+(1-t)s)\,dt:= (\zeta-s)\bar{\mathscr{G}}_s(\zeta).\nonumber
\end{eqnarray}
Thus, we obtain
\begin{equation*}
\big|\mathscr{G}(\zeta)-\mathscr{G}(s)\big|~\leq~C_{B_2}|\zeta-s|.    
\end{equation*}
This property will be used extensively in the analysis, even though it is not explicitly retained in all steps. 

%%%%%%%%%%%%%%%%%%
\subsection{Background and Motivation.}
 Here, we briefly review existing research on quasi-linear problems and relevant sources (cf., books \cite{Amann1993, bernardi2003, brenner2008, ciar78, Ciarlet1991, lasis2003, riviere2008discontinuous} and articles \cite{Ainsworthkay1999, Ainsworthkay2000, andre1996, babuska1987, Castillo2000, Feistauer1993, Hlava1994, houston2005, Douglas1975, Perugia2001, Poussin1994, Riviere2001}). Finite element methods (FEM) have garnered considerable attention in both science and engineering due to their ability to handle complex geometries and domains. One of the FEM techniques, the discontinuous Galerkin (DG) method, is particularly effective in solving partial differential equations with discontinuous coefficients, as it approximates and discretizes boundary value problems while controlling the effects of discontinuities through penalty parameters.

Arnold, Douglas, and Wheeler introduced the symmetric interior penalty discontinuous Galerkin (SIPG) method for elliptic and parabolic problems, based on Nitsche's symmetric formulation \cite{Arnold1982, Douglasjr1976, Wheeler1978}. Although SIPG techniques are adjoint-consistent, their stabilizing parameters depend on problem-specific coefficient bounds and constants from inverse inequalities. Oden, Babuska, and Baumann later developed a nonsymmetric interior penalty discontinuous Galerkin (NIPG) technique, offering unconditional stability for the penalty parameter choice \cite{odenbabuska1988}. NIPG methods, explored by Riviere, Wheeler, and Girault \cite{Riviere2001} and Houston, Schwab, and Siili \cite{houstonschwab2002}, have drawn interest for their ability to solve a variety of partial differential equations.

For linear self-adjoint elliptic equations, optimal a priori error estimates using the broken $H^1$-norm have been established for SIPG and NIPG methods \cite{arnoldbrezzi2002, Riviere2001}. However, there are few studies on DG approximations for nonlinear elliptic problems, with exceptions such as \cite{gudi2007discontinuous, gudi2008hp, houston2005}. In \cite{houston2005}, a family of DG methods is applied to quasi-linear elliptic problems involving Lipschitz-continuous, strongly monotone differential operators.

Numerous DG formulations for elliptic problems exist \cite{arnoldbrezzi2002, gudi2008hp, gudi2007discontinuous, houston2005, riviere2008discontinuous}. The local discontinuous Galerkin (LDG) method, initially designed for first-order hyperbolic problems, has been extended to elliptic problems through mixed DG formulations \cite{Castillo2000}. Studies like \cite{Bustinza2004} have used the LDG method for quasilinear elliptic problems with mixed boundary conditions and uniformly monotone nonlinearities, demonstrating the existence of approximate solutions and providing a priori error estimates. In \cite{houston2005}, a one-parameter DG family was used to address quasilinear problems with monotone operators, yielding optimal error estimates in $h$ but suboptimal ones in $p$ for the broken Sobolev $H^1$-norm.

The problem considered in this work frequently arises in material science, engineering, and geophysics to forecast and examine fracture behavior under out-of-plane shear forces. The static anti-plane shear crack model is the primary approach for studying shear fracture in materials.  This model assists in failure prediction and provides information for the design of more robust building components and materials by utilizing techniques such as stress intensity factors and fracture toughness (see, for example, \cite{irwin1957analysis, lawn1993fracture, anderson2005fracture, scholz2019mechanics}).

\subsection{Our Contribution.}
Using a DG finite element approach, our study investigates the static anti-plane shear crack model \eqref{contmodel}-\eqref{dbdry}. We discretize the model problem using the DG method, allowing for hanging nodes in the domain discretization. Invoking Ritz representation theory, we establish the existence of a unique discrete solution and derive an optimal a priori error estimate in the broken Sobolev space. The linearized form of the model is handled using the Abuin-Nitsche procedure to yield the $L^2$-norm error estimate, which shows the optimal order convergence. Consequently, the optimal order a priori error estimate for the energy norm is also established. The numerical results confirm the theoretical predictions and demonstrate the singular behavior of the stresses at the crack. However, crack-tip strains' growth is slower than the linearized model.

\noindent
The entire paper is arranged as follows: Section 2 introduces the preliminary function spaces, Section 3 presents the DG finite element formulation, and Section 4 contains the error analysis. Section 5 concludes with numerical experiments verifying our analysis and a summary of the findings.

%%%%%%%%%%%%%%%%%%%%%%%%%%%%%%%%%%%%%%%%%%%%%%%%
\section{Precursory}
A few essential assets that are crucial to our upcoming study are included in this section. In this work, we employ the usual notation from Lebesgue and Sobolev space theory \cite{adams1975}. The Sobolev space of order $(k,p)$, abbreviated by $W^k_p(\Omega)$, is defined by  
\begin{equation*}
 W^k_p(\Omega)~:=\big\{\phi\in L^p(\Omega):\quad D^\iota \phi \in L^p(\Omega), |\iota|\leq k\big\}.
\end{equation*}
for $1\leq p \leq \infty$, equipped with inner product and the norm
\begin{equation*}
(\phi, \psi)_{k,p,\Omega}~=~  \sum_{\iota \leq k}\int_\Omega  D^\iota \phi \cdot D^\iota \psi\, dx,   
\end{equation*}
and 
\begin{equation*}
\|\phi\|_{W^k_p(\Omega)}~=~ \Big(\sum_{\iota \leq k}\int_\Omega  |D^\iota \phi|^p\, dx\Big)^{1/p},   
\end{equation*}
respectively. For $p=\infty$, the norm is given by
\begin{equation*}
\|\phi\|_{W^k_\infty(\Omega)}~=~ ess\sup_{\iota \leq k}\|D^\iota \phi\|_{L^\infty(\Omega)}.   
\end{equation*}
Moreover, we write $W^k_2(\Omega):=H^k(\Omega)$, for $p=2$, and the norm is denoted by $\|\cdot\|_{H^k(\Omega)}$. Let us define the function space $\mathcal{S}$ and the test function space $T$ for our problem, by
\begin{equation*}
\mathcal{S}~:=~\big\{ \phi \in H^2(\Omega)\cap W^{1,\infty}(\Omega):~ \phi|_{\Gamma}=g\big\},
\end{equation*}
 and  $T~:=~H^1_0(\Omega),$ respectively. Hence, the variational formulation for the problem \eqref{contmodel}-\eqref{dbdry} is read as: Find  $u\in\mathcal{S}$ such that 
\begin{equation}
\mathcal{A}(u; u, \phi)~=~ \mathcal{L}(\phi), \quad \text{for all}\quad \phi\in T. \label{abstLform2.9}
\end{equation}
However, the semilinear $\mathcal{A}$ and the linear forms are given by
\begin{eqnarray}
 \mathcal{A}(z; u, \phi)&=:&\int_{\Omega}\mathscr{G}(x, |\nabla z|)\nabla u\cdot \nabla \phi\,dx, \quad \forall \phi\in T, \label{variationalform2.8} 
\end{eqnarray}
and 
\begin{equation}
 \mathcal{L}(\phi)=:~\int_{\Omega}f \phi\,dx,\quad \forall \phi\in T,\label{Lform2.9}
\end{equation}
respectively. Additionally, we consider the function  $f \in L^2(\Omega),$ and $\; g \in H^{3/2} (\Gamma)$, respectively. Then, the problem   \eqref{abstLform2.9} has a unique weak solution $u \in \mathcal{S}$. Then, there is a positive constant $C_{R_1}$ such that the following regularity results holds,
\begin{equation}
  \|u\|_{H^2(\Omega)}  \leq~ C_{R_1} \big( \|f\|_{L^2(\Omega)}+\|g\|_{H^{\frac{3}{2}}(\Gamma)}\big).\label{ctsubd}
\end{equation}
For the estimation of the nonlinear term, we define $v \longmapsto \mathcal{J}(x,v),\,$ $x\in \bar{\Omega},$ such that
\begin{equation*}
\mathcal{J}(x, v)=: \frac{v}{2\,\mu \,( 1+\beta^\alpha |v|^\alpha )^{1/ \alpha}}, \quad \forall \, v \in (W^{1,\infty}(\Omega))^d, 
\end{equation*}
One may observed that $\mathcal{J}(x, v)=\mathscr{G}(x, |v|)v$. Throughout the paper, we write $\mathscr{G}(|v|)$ instead of $\mathscr{G}(x, |v|)$ for simplicity.
%%%%%%%%%%%%%%%%%%%%%%%%%%%%%%%%%

For all $x\in \bar{\Omega}$, $v_1,\,v_2 \in (W^{1,\infty}(\Omega))^d$ with $v_1\geq v_2$, and $t\in [0,1]$, then, from the mean value theorem, we have 
\begin{eqnarray}
\mathcal{J}(x, v_1)-\mathcal{J}(x, v_2)&=&\int_0^1\frac{1}{dt}\mathcal{J}(tv_1+(1-t)v_2)\,dt\nonumber\\
&=&\frac{1}{2\,\mu} \,\int_0^1\frac{1}{dt}\Big(\frac{(tv_1+(1-t)v_2)}{( 1+\beta^\alpha |tv_1+(1-t)v_2|^\alpha )^{1/ \alpha}}\Big)\,dt\nonumber\\
&=&\frac{1}{2\,\mu} \int_0^1\frac{(v_1-v_2)}{( 1+\beta^\alpha |tv_1+(1-t)v_2|^\alpha )^{1/ \alpha}}\,dt,  \nonumber
\end{eqnarray}
and hence,
\begin{eqnarray}
\frac{1}{2\,\mu\,( 1+\beta^\alpha (|v_1|+|v_2|)^\alpha )^{1/ \alpha}}\times(v_1-v_2)\leq~ \mathcal{J}(x, v_1)-\mathcal{J}(x, v_2)
\leq~\frac{1}{2\,\mu}\times(v_1-v_2). \label{2.5inq} 
\end{eqnarray}
Since $|tv_1+(1-t)v_2|\leq |v_1|+|v_2|$ and $( 1+\beta^\alpha (|v_1|+|v_2|)^\alpha )^{1/ \alpha}\geq 1$. 
%%%%%%%%%%%%%%%%%%%%%%%%%%%%%%%%%%
Thus, we have 
\begin{equation}
C_{M_1} (v_1-v_2)\leq~\mathcal{J}(x, v_1)-\mathcal{J}(x, v_2)\leq~ C_{M_2} (v_1-v_2)
\end{equation}
with the precise constants $C_{M_1}=1/[2 \mu( 1+\beta^\alpha (|v_1|+|v_2|)^\alpha )^{1/ \alpha}]$ and $C_{M_2}=1/2\mu$. Then, the functional $\mathcal{J}(\cdot, \cdot)$ satisfies the following inequalities 
\begin{equation}\label{lpsctsinq}
|\mathcal{J}(x, v_1)-\mathcal{J}(x, v_2)|\leq C_{M_3} |v_1-v_2|, 
\end{equation}
and 
\begin{equation}\label{monoinq}
(\mathcal{J}(x, v_1)-\mathcal{J}(x, v_2), v_1-v_2)\geq C_{M_4}  |v_1-v_2|^2,
\end{equation}
where $C_{M_3}$ and $C_{M_4}$ are the positive constant.
The inequality  \eqref{lpsctsinq} refers to the Lipschtiz continuity of $\mathcal{J}(\cdot, \cdot)$, while the inequality \eqref{monoinq} is known as the monotonicity.

The following section concerns the discontinuous Galerking setting of the problem \eqref{contmodel}--\eqref{dbdry}.
%%%%%%%%%%%%%%%%%%%%%%%%%%%%%%%%%%%%%%%%%%%%%%%
%%%%%%%%%%%%%%%%%%%%%%%%%%%%%%%%%%%%%%%%%%%%%%%%%
\section{Discretization}
This section is devoted to the discontinuous Galerkin finite elements approximation of the problem \eqref{contmodel}--\eqref{dbdry}. 
%%%%%%%%%%%%%%%%%%%%%%%%%%%%%%%%%555
%%%%%%%%%%%%%%%%%%%%%%%%%%%%%%%%%%%%%%%%%%%%%%%%%%%%%%%%%%%
Let $\mathcal{T}_{h}$ represent the partitions of $\bar{\Omega}$ into the disjoint open elements $\tau_i$, where $i\in [1:N_h]$, and $\bar{\Omega}=\bigcup_{i=1}^{N_h} \bar{\tau}_i$ for all $i$. Only $d$-simplex or $d$-parallelepiped components are permitted. Keeping in mind that $\mathcal{T}_h$ may be regular or 1-irregular, that is, each face of $\tau_i \in \mathcal{T}_h$ permits a single hanging node, which is the barycentre of the face. It is likely because $\mathcal{T}_h$ is a shape-regular family of partitions (see. \cite[Braess, 1997, pp. 61, 113, and Remark 2.2, pages. 114]{braess1997}). Moreover, we make the assumption that $\mathtt{F}_{i}(\hat{\tau}_i)=\tau_i,\; \forall \tau_i \in \mathcal{T}_h$ is an affine image of a fixed component $\hat{\tau}_i$ for each $\tau_i \in \mathcal{T}_h$. In this case, $\hat{\tau}_i$ indicates either the open unit simplex or the open unit hypercube in $\mathbb{R}^d$.
Further, the collection of all inner and boundary edges/faces are identified by $\mathscr{E}_{int,h}$ and $\mathscr{E}_{bd,h}$, respectively. 
 It follows that the collection of all edges/faces $\mathscr{E}_{h}$ is such that $\mathscr{E}_{h}=\mathscr{E}_{int, h} \cup \mathcal{E}_{bd,h}$. We assume that any edge/face $e_i\in \mathscr{E}_h$ that precisely lies on the boundary $\Gamma$ will be included in the boundary $\Gamma$ decomposition. For each element $\tau_i$, let $h_{\tau_i}=diam(\tau_i)$ represent its diameter, and $h_{e_i}$ represent the length (diameter) of the edge (face) $e_i$ according to $d=2\, \text{or}\, d\geq 3$. Take for granted that $h=\max\{h_{\tau_i}: \tau_i \in \mathcal{T}_h\}$. We set $h_{\tau_i}=h_i$ for ease.

First, we present the finite element space and then go on to additional analysis. The set of polynomials with a total degree of $\mathtt{n}$ (a non-negative integer) on $\hat{\tau}$ is represented by $\mathcal{P}_{\mathtt{n}}(\hat{\tau})$. The set of all tensor-product polynomials on $\hat{\tau}$ of degree $\mathtt{n}$ in each coordinate direction is denoted by $\mathcal{Q}_{\mathtt{n}}(\hat{\tau})$ in the unit hypercube case. Additionally, for each $\tau_i\in \mathcal{T}_h$, let $n_i$ and $m_i$ be two non-negative numbers that define the local Sobolev index and the local polynomial degree, respectively. The vectors, using $n_i,\; m_i$ and $\mathtt{F}_i$, are now defined as
$$\mathtt{n}:=\{n_i: \hat{\tau}_i \in \mathcal{T}_h\}, \quad \mathtt{m}:=\{m_i:\hat{\tau}_i \in \mathcal{T}_h\} \quad \text{and} \quad \mathbf{F}:=\{\mathtt{F}_i:~ \hat{\tau}_i \in \mathcal{T}_h\},$$  respectively. Further, let $\mathcal{R}_{n_i}$ represent a collection of polynomials, such as $\mathcal{P}_{n_i}(\hat{\tau}_i)$ or $\mathcal{Q}_{n_i}(\hat{\tau}_i)$, then the discontinuous finite element space is defined by 
\begin{equation*}
\mathcal{S}^{\mathtt{n}}_h(\mathcal{T}_h):=\{\phi \in L^2(\Omega): ~ \phi|_{\tau_i}*\mathtt{F}_{i} \in \mathcal{R}_{n_i}(\hat{\tau}_i), \quad \forall \tau_i \in \mathcal{T}_h\},
\end{equation*}
and $\mathscr{B}^{\mathtt{m}}_\mathtt{n}(\Omega; \mathcal{T}_h)$ identifies the broken Sobolev space with composite index $\mathtt{m}$, as
\begin{equation*}
\mathscr{B}^{\mathtt{m}}_\mathtt{n}(\Omega; \mathcal{T}_h):=\big\{\phi \in L^2(\Omega): ~ \phi|_{\tau_i} \in H^{m_i}(\hat{\tau_i}), \quad \forall \tau_i \in \mathcal{T}_h\big\} %\mathscr{G}^{\mathtt{m}}(\Omega, \mathcal{T}_h)
\end{equation*}
equipped with the broken Sobolev norm and the semi-norm as 
\begin{equation*}
\|\phi\|^2_{\mathtt{m}, \mathcal{T}_h}:=\sum_{\tau \in \mathcal{T}_h} \|\phi\|^2_{H^{m_i}(\tau_i)} \quad \text{and} \quad |\phi|^2_{\mathtt{m}, \mathcal{T}_h}:=\sum_{\tau_i \in \mathcal{T}_h} |\phi|^2_{H^{m_i}(\tau_i)},
\end{equation*}
respectively. For error analysis, we define the discontinuous finite element space 
\begin{equation*}
\accentset{\ast}{\mathcal{S}}^n_h(\mathcal{T}_h)= \{\phi \in L^2(\Omega): ~ \phi|_{\tau_i}*\mathtt{F}_{i} \in \mathcal{R}_{n_i}(\hat{\tau}_i), \quad \forall \tau_i \in \mathcal{T}_h\}.
\end{equation*}
For all $\tau_i \in \mathcal{T}_h$, we adopt the notation $\mathscr{B}^{m}_\mathtt{n}(\Omega; \mathcal{T}_h)$ in case of $m_i= m$, while the norm and the semi-norm will be identified by $\|\cdot\|_{m, \mathcal{T}_h}$ and $|\cdot|_{m, \mathcal{T}_h}$, respectively.
%%%%%%%%%%%%%%%%%%%%%%%%%
 \begin{figure}
   \begin{minipage}[h]{0.45\linewidth}
   %\centering
\begin{tikzpicture}
 \filldraw[fill=brown!100,draw=black, thick] {(0,0)node[below]{A}--(6,0)node[below]{B}--(6,6)node[above]{C}--(0,6)node[above]{D}--(0,0)};
     \filldraw[fill=gray!100, draw=black, thick] {(0,0)--(1,0)--(1,1)--(0,0)};
     \filldraw[fill=gray!80, draw=black, thick] {(1,1)--(2,1)--(2,2)--(1,1)};
     \filldraw[fill=gray!100, draw=black, thick] {(2,2)--(3,2)--(3,3)--(2,2)};
     \filldraw[fill=gray!80, draw=black, thick] {(3,3)--(6,3)--(6,6)--(3,3)};
     \filldraw[fill=gray!100, draw=black, thick] {(3,3)--(6,3)--(6,0)--(3,3)};
      \filldraw[fill=gray!80, draw=black, thick] {(3,3)--(3,4)--(2,4)--(3,3)};
      \filldraw[fill=gray!100, draw=black, thick] {(2,4)--(2,5)--(1,5)--(2,4)};
      \filldraw[fill=gray!80, draw=black, thick] {(1,5)--(1,6)--(0,6)--(1,5)};
       \filldraw[fill=gray!100, draw=black, thick] {(1,0)--(3,0)--(3,2)--(1,0)};
        \filldraw[fill=gray!80, draw=black, thick] {(3,0)--(3,3)--(4.5,1.5)--(3,0)};
     \filldraw[fill=gray!80, draw=black, thick] {(1,6)--(3,6)--(3,4)--(1,6)};
        \filldraw[fill=gray!100, draw=black, thick] {(3,6)--(3,3)--(4.5,4.5)--(3,6)};
     \draw [thick](0,0)--(6,6);
     \draw [thick](0,1)--(2,1);
    \draw [thick](0,2)--(3,2);
     \draw [thick](0,4)--(3,4);
     \draw [thick](0,5)--(2,5);
    \draw [thick](0,3)--(6,3);
    \draw[thick] {(6,0)--(0,6)};
    \draw[thick] {(3,0)--(3,6)};
    \draw[thick] {(1,0)--(1,6)};
     \draw[thick] {(2,1)--(2,5)};
     %\draw[thick] {(4,0)--(4,2)};
     % \draw[thick] {(5,0)--(5,1)};
     % \draw[thick] {(5,5)--(5,6)};
     % \draw[thick] {(4,4)--(4,6)};
       \draw[thick] {(0,4)--(2,2)};
       \draw[thick] {(2.5,2.5)--(0,5)};  
       \draw[thick] {(0,1)--(2.5,3.5)};
       \draw[thick] {(0,2)--(2,4)};
       \draw[thick] {(0,3)--(1.5,4.5)};
       \draw[thick] {(0,4)--(1,5)};
        \draw[thick] {(0,5)--(0.5,5.5)}; 
       \draw[thick] {(0,3)--(1.5,1.5)};
       \draw[thick] {(0,2)--(1,1)};
        \draw[thick] {(0,1)--(0.5,0.5)};
       \draw[thick] {(1,6)--(3,4)};
         \draw[thick] {(3,6)--(4.5,4.5)};
    \draw[thick] {(1,0)--(3,2)};
       \draw[thick] {(3,0)--(4.5,1.5)};
    \draw (2.2,1.2) node[above, thick]{\Large $e_{i}$};
    \draw (2.3,4.3) node[very thick, black]{\Large $\tau_{i}$};
    \draw (5.2,3.9) node[very thick, black]{Hanging};
    \draw (5.2,3.4) node[very thick, black]{Nodes};
    \foreach \x/\y in {4.3/3.9}\filldraw[red] (\x,\y) circle (0.9mm);
    \draw (5.2,2.5) node[very thick, black]{Corner};
    \draw (5.2,2.0) node[very thick, black]{Nodes};
    \foreach \x/\y in {4.3/2.5}\filldraw[thick] (\x,\y) circle (0.9mm); 
       \foreach \x/\y in {0/0,1/0,3/0,6/0,0.5/0.5,
       0.5/1.5,1.5/1.5, 0.5/2.5,1.5/2.5,2.5/2.5,
       0.5/3.5,1.5/3.5, 0.5/4.5,   0/1,0/2,0/3,0/4,0/5,0/6,   1/1,1/2,1/3,1/4,1/5,1/6,
        2/1,2/2,2/3,2/4,2/5,  3/2,3/3,3/4,3/6,   
        6/6}\fill (\x,\y) circle (0.9mm);
    \foreach \x/\y in {0.5/5.5, 2/5,3/4,4.5/1.5, 4.5/4.5,
    1.5/4.5,2.5/3.5,2.5/2.5,0.5/0.5,1.5/1.5,2/1,3/2}\filldraw[red] (\x,\y) circle (0.9mm); 
    \draw (2.8,-1.0) node[thick]{(a) Hanging node in trianglulation};
    \end{tikzpicture}
  \end{minipage}
  \vspace{0.00mm}
     %%%%%%%%%%%%%%%%%  
\begin{minipage}[h]{0.3\linewidth} %
\centering
 \begin{tikzpicture}
    \filldraw[fill=brown!100, draw=black, thick]  {(0,0)node[below]{E} --(8,0)node[below]{F}--(8,6)node[above]{G}--(0,6)node[above]{H}--(0,0)};
     \filldraw[fill=gray!80,draw=black, thick] {(2,0)--(4,0)--(4,2)--(2,2)--(2,0)};
      \filldraw[fill=gray!80,draw=black, thick] {(4,2)--(6,2)--(6,4)--(4,4)--(4,2)};
      \filldraw[fill=gray!100,draw=black, thick] {(6,4)--(8,4)--(8,6)--(6,6)--(6,4)};
    \filldraw[fill=gray!100,draw=black, thick] {(2,2)--(4,2)--(4,4)--(2,4)--(2,2)};
     \filldraw[fill=gray!80,draw=black, thick] {(0,2)--(2,2)--(2,4)--(0,4)--(0,2)};
      \filldraw[fill=gray!100,draw=black, thick] {(2,4)--(0,4)--(0,6)--(2,6)--(2,4)};
    \draw[ thick] {(0,4)--(8,4)};
    \draw[ thick] {(3,4)--(3,6)};
    \draw[ thick] {(0,1)--(2,1)};
    \draw[ thick] {(1,0)--(1,2)};
    \draw[thick] {(4,2)--(4,4)};
    \draw[thick] {(2,5)--(6,5)};
    \draw[ thick] {(0,4)--(4,4)};
    \draw[ thick] {(5,4)--(5,6)};
    \draw[thick] {(0, 2)--(8, 2)};
    \draw[thick] {(6, 0)--(6, 6)};
    \draw[thick] {(4,0)--(4,6)};
    \draw[thick] {(2,0)--(2,6)};
    \draw (1.7,2.9) node[very thick, black]{\Large $e_{i}$};
    \draw (3,3)  node[very thick, black]{\Large $\tau_{i}$};
    \draw (7.15,1.2) node[very thick, black]{Corner};
    \draw (7.15,0.7) node[very thick, black]{Nodes};
    \draw (7.15,3.1) node[very thick, black]{Hanging};
    \draw (7.12,2.6) node[very thick, black]{Nodes};
    \foreach \x/\y in {0/0,0/1, 0/6, 8/0,0/4,4/0,4/4,0/2,2/0,2/4,8/4,4/2,3/4,3/5,3/6,2/5,2/6,5/5,5/6,5/4,4/5,4/6,6/0,6/2,6/4,6/6,1/1,1/2,2/2,1/0}\fill (\x,\y) circle (0.9mm);
   \foreach \x/\y in { 6.3/1.2}\fill (\x,\y) circle (0.9mm);
    \foreach \x/\y in {6.25/3.1}\filldraw[red] (\x,\y) circle (0.9mm); 
    \foreach \x/\y in {2/1,3/4,1/2,2/5,5/4,6/5}\filldraw[red] (\x,\y) circle (0.9mm);   
    \draw (4.06,-1.0) node[thick]{(b) Hanging node in Qudrilateral};
    \end{tikzpicture}
\end{minipage}
\caption{Red color dotes denote the hanging nodes in both families of subdivisions $\mathcal{T}_h$.}
\label{mshfig}
\end{figure}
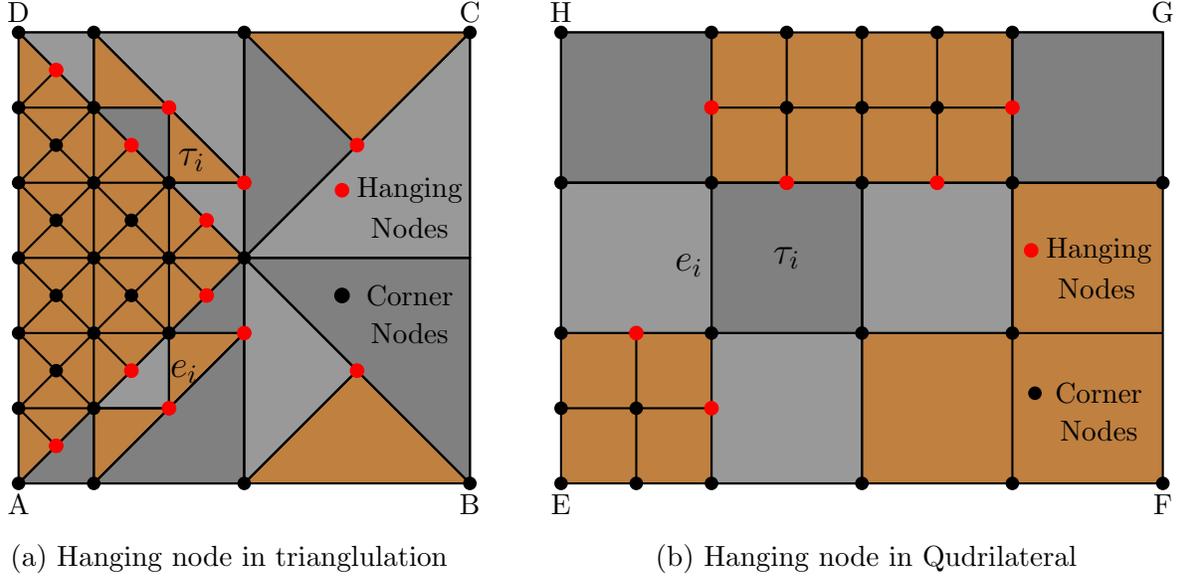
%%%%%%%%%%%%%%%%%%%%%
Note that the discretization $\mathcal{T}_h$ and the discontinuous finite element space $\mathcal{S}^\mathtt{n}_h$ must satisfy the following properties:
\begin{itemize} \label{Thp1p2}
    \item[($A_1$)] The bounded local variation property is satisfied by the family of subdivisions of $\mathcal{T}_h$. That is, there exists a positive constant $\delta$, independent of  $h_i$ and $h_j$, such that
    $$1/\delta\leq h_i/h_j\leq \delta$$ for any $\tau_i,\, \tau_j \in \mathcal{T}_h$ if $\partial\tau_i \cap \partial \tau_j$ has positive measure. 
    %%%%%%%%%%%%%%%
    \item[($A_2$)] For a  given  pair of $\tau_i,\, \tau_j$ in $\mathcal{T}_h$ that share a $(d-1)$-dimensional face and the polynomial degree vector $\mathtt{n}$, then there is a positive constant $\varrho$ that is independent of both $n_i$ and $n_j$,  is assumed to satisfy a constrained local variation property  $$1/\varrho \leq n_i/n_j \leq \varrho$$ with $n_i \geq 1$, for each $\tau_i \in \mathcal{T}_h$.
\end{itemize}
Additionally, assume that $n_{ij}$ denote the degree of the polynomial that is confined to $e_{ij}$, namely, a common edge of $\tau_i$ and $\tau_j$, which means that $e_{ij}=\partial \tau_i \cap \partial \tau_j$, and defined as $n_{ij}=(n_i+n_j)/2$. While $n_i$ is the degree of the polynomial on the edge $e_i$ if $e_i=\partial \tau_i \cap \Gamma$.

We now define the jump and mean of $\phi\in \mathscr{B}^1_\mathtt{n}(\Omega; \mathcal{T}_h)$ across an element edge/face $e_{ij}\in \mathscr{E}_{int,h}$, which is a common edge/face of $\tau_i$ and $\tau_j$ with $i>j$ as
$$\sjump{\phi}=:\phi|_{e_{ij}\cap \partial\tau_i}-\phi|_{e_{ij}\cap \partial\tau_j}$$ and $$\smean{\phi}=:\frac{1}{2}\{\phi|_{e_{ij}\cap\partial\tau_i}+\phi|_{e_{ij}\cap \partial\tau_j}\},$$
respectively. If $\e_i \in \partial \tau_i \cap \Gamma$. that is, $e_i \subset \mathscr{E}_{bd,h}\cup \mathscr{E}_{d,h}$, then the mean and the jump on $e_i$ are defined as 
$$\sjump{\phi}|_{e_{i}}=:\phi|_{\partial \tau_{i}\cap \Gamma} \quad \text{and} \quad \smean{\phi}|_{e_{i}}=:\phi|_{\partial\tau_{i}\cap \Gamma},$$
respectively. Thus, DG-approximation of \eqref{contmodel}-\eqref{dbdry} is read as: To find $u_h^{dg}\in \mathcal{S}^{\mathtt{n}}_h(\mathcal{T}_h)$ such that 
\begin{equation}
\mathcal{A}_h(u_h^{dg};u_h^{dg}, \phi_h)= \mathcal{L}_h(\phi_h),\quad \forall \phi_h \in \mathcal{S}^{\mathtt{n}}_h(\mathcal{T}_h).\label{dgformulation}
\end{equation}
Here, the semilinear form  $\mathcal{A}_h(\cdot,\cdot,\cdot)$ is given by
\begin{eqnarray}
\mathcal{A}_h(u_h^{dg}; u_h^{dg},\phi_h)&=:& \sum_{\tau_i \in \mathcal{T}_h}\int_{\tau_i}\mathscr{G}(|\nabla u_h^{dg}|)\nabla u_h^{dg}\cdot \nabla \phi_h\,dx-\sum_{e_i \in \mathscr{E}_{int, h}} \int_{e_i} \smean{\mathscr{G}(|\nabla u_h^{dg}|)\nabla u_h^{dg} \cdot \bar{\bf n}} \sjump{\phi_h}\,ds\nonumber\\
 &&- \sum_{{e_i} \in \mathscr{E}_{int, h}} \int_{e_i} \smean{\mathscr{G}(|\nabla u_h^{dg}|)\nabla \phi_h \cdot \bar{\bf n}}\sjump{u_h^{dg}}ds+ \sum_{{e_i} \in \mathscr{E}_{h}} \int_{e_i} \sigma_i\frac{n_i^2}{|e_i|^\gamma}\sjump{u_h^{dg}} \sjump{\phi_h}\,ds,\nonumber\\ \label{semilnrform3.1}
\end{eqnarray}
and 
\begin{equation}
 \mathcal{L}_h(\phi_h)=:~\sum_{\tau_i \in \mathcal{T}_h}\int_{\tau_i}f \phi_h\,dx +\sum_{e_i \in \mathscr{E}_{bd,h}} \int_{e_i} \sigma_i\frac{n_i^2}{|e_i|^\gamma} \, g \phi_h\,ds,\nonumber \label{lnrform3.2}
\end{equation}
in which $\bar{\bf n} $ is defined as the unit outward normal vector such that $\bar{\bf n} =( \nu_1, \nu_2, \ldots, \nu_d )$, and $ \sigma|_{e_i} :=\sigma_i,$ $n_i$ and $\gamma$ are the positive constants and independent of $h$.

From the equivalence relation between the model problem \eqref{contmodel}-\eqref{dbdry} and the DG-formulation \eqref{dgformulation}, the solution $u\in H^2(\Omega)$ of the problem \eqref{contmodel}-\eqref{dbdry} satisfies the Eq. \eqref{dgformulation}, i.e., $\mathcal{A}_h(u; u, \phi_h)=\mathcal{L}_h(\phi_h)$, since $u\in H^1(\Omega)\cap H^2(\Omega,\mathcal{T}_h)$. Moreover, by recalling the consistency with $\phi_h \in \mathcal{S}^{\mathtt{n}}_h(\mathcal{T}_h) \subset H^2(\Omega; \mathcal{T}_h)$, one can easily obtain the following condition 
\begin{equation}
\mathcal{A}(u; u, \phi_h)~=~\mathcal{A}_h(u_h^{dg}; u_h^{dg},\phi_h) \quad \text{for all}\;\;  \phi_h \in \mathcal{S}^{\mathtt{n}}_h(\mathcal{T}_h). \label{orthogotype}
\end{equation}
Here $u_h^{dg}$ is the solution of \eqref{dgformulation}.

We are going to accumulate some fundamental approximation properties (see \cite{babuska1987}), trace inequalities (see \cite[Lemma 2.1]{Riviere2001}, \& \cite[Appendix A.2]{oden1998}), and inverse inequalities (see \cite[Theorem 6.1]{bernardi2003} \& \cite[pp. 6]{lasis2003}) that will be helpful in our upcoming analysis. 
We refer \cite{babuska1987, bernardi2003, lasis2003, Riviere2001, oden1998} for a description of the proof of that. The proofs of the following lemmas are skipped over here. Moreover, for $ \tau_i \in \mathcal{T}_h$, with the degree of the polynomial $n_i$ on $\tau_i$, define $\mathcal{I}_hu$ as $$\mathcal{I}_hu|_{\tau_i}=u_{n_i}^{h_i},\quad \text{for} \quad u|_{\tau_i} \in H^{m_i}(\tau_i),\, m_i \geq 2.$$ 
%%%%%%%%%%%%%%%%%
\begin{lemma}[\cite{babuska1987}] \label{apprxlm3.2}
For $\phi|_{\tau_i} \in H^{m_i}(\tau_i), \quad \tau_i \in \mathcal{T}_h$ with $m_i \geq 0$, then there are two positive constants $C_{ap, 1}$ and $C_{ap,2}$ independent of the mesh parameters and a sequence $\mathcal{I}_h\phi|_{\tau_i}=\phi^{h_i}_{n_i} \in \mathcal{R}_{n_i}(\tau_i)$ with $n_i=1,\,2,\, \ldots,$ such that 
\begin{enumerate}
\item[(i)] for any $0\leq j \leq m_i$,
\begin{equation}\label{appforelmnts}
\|\phi-\mathcal{I}_h\phi\|_{H^{j}(\tau_i)}\leq~C_{ap, 1} \frac{h_i^{{q_i}-j}}{n_i^{{m_i}-j}}\|\phi\|_{H^{m_i}(\tau_i)},
\end{equation}
\item[(ii)] for any $j+\frac{1}{2} < m_i$,
\begin{equation}\label{appforedge}
\|\phi-\mathcal{I}_h\phi\|_{H^{j}(e_i)}\leq~C_{ap,2} \frac{h_i^{{q_i}-j-\frac{1}{2}}}{n_i^{{m_i}-j-\frac{1}{2}}}\|\phi\|_{H^{m_i}(\tau_i)},
\end{equation}
\end{enumerate}
where $1\leq q_i\leq \min\{m_i, n_i+1\}$ with $n_i \geq 1$.
\end{lemma}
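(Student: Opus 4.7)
The plan is to follow the standard scaling argument that underpins all $hp$-approximation results of Babuška–Suri type, combined with shape-regularity and the affine structure of the mesh. Since the lemma is stated as a quotation of \cite{babuska1987}, the intent here is to outline a self-contained route rather than to prove anything genuinely new.

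First I would transport the problem to the reference element $\hat{\tau}_i$ via the affine map $\mathtt{F}_i$. Writing $\hat{\phi}:=\phi\circ \mathtt{F}_i$, shape regularity of $\{\mathcal{T}_h\}$ yields
\begin{equation*}
\|\hat{\phi}\|_{H^{k}(\hat{\tau}_i)} \simeq h_i^{k-d/2}\,|\det D\mathtt{F}_i|^{-1/2}\,\|\phi\|_{H^{k}(\tau_i)},
\end{equation*}
with constants depending only on the reference element and the shape-regularity constant. The construction of $\mathcal{I}_h\phi$ then proceeds element-by-element by setting $\mathcal{I}_h\phi|_{\tau_i}:=\hat{\phi}_{n_i}\circ \mathtt{F}_i^{-1}$, where $\hat{\phi}_{n_i}\in \mathcal{R}_{n_i}(\hat{\tau}_i)$ is the Babuška–Suri projector on $\hat{\tau}_i$ (built from Legendre/Jacobi expansions in the hypercube case and from their simplex analogues in the $d$-simplex case).

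The key ingredient is the reference-element estimate
\begin{equation*}
\|\hat{\phi}-\hat{\phi}_{n_i}\|_{H^{j}(\hat{\tau}_i)}\;\le\; C\,n_i^{-(m_i-j)}\,\|\hat{\phi}\|_{H^{m_i}(\hat{\tau}_i)},\qquad 0\le j\le m_i,
\end{equation*}
whose proof uses orthogonality of the tensor-product polynomial basis on $\hat{\tau}_i$ and interpolation between integer Sobolev indices. Rescaling both sides of this inequality with the change-of-variable factors above and tracking the derivative scaling $h_i^{-j}$ produces
\begin{equation*}
\|\phi-\mathcal{I}_h\phi\|_{H^{j}(\tau_i)}\;\le\; C\,\frac{h_i^{m_i-j}}{n_i^{m_i-j}}\,\|\phi\|_{H^{m_i}(\tau_i)}.
\end{equation*}
To recover the sharp exponent $q_i=\min\{m_i,n_i+1\}$ in the $h$-power, one truncates the polynomial degree: if $m_i>n_i+1$, one reduces the ambient Sobolev index to $n_i+1$ before invoking the reference-element bound and absorbs the remaining derivatives into $\|\phi\|_{H^{m_i}(\tau_i)}$ via the continuous embedding $H^{m_i}\hookrightarrow H^{n_i+1}$; this exchange is exactly what forces the minimum in the definition of $q_i$.

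For part (ii) I would combine the element estimate with the scaled multiplicative trace inequality on $\hat{\tau}_i$,
\begin{equation*}
\|\hat{v}\|_{H^{j}(\hat{e}_i)}\;\le\; C\,\|\hat{v}\|_{H^{j+1/2}(\hat{\tau}_i)},
\end{equation*}
applied to $\hat{v}=\hat{\phi}-\hat{\phi}_{n_i}$, and then rescale using $|e_i|\simeq h_i^{d-1}$. The surface Jacobian contributes the additional $h_i^{-1/2}$ factor while the trace inequality contributes the $n_i^{1/2}$ factor (this last point requires an inverse estimate on $\hat{\phi}_{n_i}$ and a direct trace estimate on $\hat{\phi}$, which is why one also needs the regularity hypothesis $j+\tfrac12<m_i$). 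The same truncation trick as above converts the $h$-exponent from $m_i-j-\tfrac12$ to $q_i-j-\tfrac12$.

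The main obstacle is the reference-element estimate with explicit and simultaneously sharp dependence on $n_i$ and on the Sobolev index $m_i$: it is where the Babuška–Suri machinery (expansion in orthogonal polynomials, estimate of tail coefficients, interpolation in $m_i$) is genuinely needed. Once that is granted, the scaling and trace steps are mechanical and the minimum $q_i=\min\{m_i,n_i+1\}$ appears automatically from the interplay between smoothness available in $\phi$ and polynomial degree available in $\mathcal{R}_{n_i}$.
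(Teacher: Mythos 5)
The paper does not prove this lemma at all: it is imported verbatim from Babu\v{s}ka--Suri \cite{babuska1987}, and the text explicitly states that the proofs of these auxiliary lemmas are skipped. So there is no in-paper argument to compare yours against; what you have written is the standard scaling route by which the cited result is actually established, and as an outline it is essentially sound: affine pull-back to $\hat{\tau}_i$, the reference-element projector with explicit $n_i^{-(m_i-j)}$ decay, push-forward with the derivative scaling, degree truncation to produce $q_i=\min\{m_i,n_i+1\}$, and a multiplicative trace step for part (ii) with the correct bookkeeping of the extra $h_i^{-1/2}$ and $n_i^{1/2}$ factors and the correct provenance of the hypothesis $j+\tfrac12<m_i$.

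Two caveats. First, your scaling identity double-counts the Jacobian: since $|\det D\mathtt{F}_i|\simeq h_i^d$, the factor should be either $h_i^{k}\,|\det D\mathtt{F}_i|^{-1/2}$ or, equivalently, $h_i^{k-d/2}$ alone, not their product (which would give $h_i^{k-d}$); moreover the clean one-power-of-$h$ equivalence holds for the seminorms $|\cdot|_{H^k}$, not the full norms, so the rescaling must be run seminorm by seminorm before reassembling $\|\cdot\|_{H^j}$. These are routine repairs. Second, and more substantively, the entire content of the lemma --- the simultaneous, sharp dependence on $n_i$ and $m_i$ --- lives in the reference-element estimate that you state but do not prove; you acknowledge this, and deferring it to \cite{babuska1987} is exactly what the paper itself does, but one should be clear that your argument is a reduction of the lemma to its own hardest ingredient rather than a proof of it.
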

%%%%%%%%%%%%%%%%%%%%%%%%%%%%%%%
%%%%%%%%
%%%%%%%%%%%%%%%
\begin{lemma}[\cite{Riviere2001}]\label{t2lemma4.3}
 Let $\phi_h\in \mathcal{R}_{n_i}(\tau_i)$ and $r=0 \;\text{or}\; 1$, then there is a positive constant $C_{tr,1},$ such that the following inequality
 \begin{equation}
\|\nabla^r \phi_h\|_{L^2(e_i)} \leq~C_{tr,1}\frac{n_i}{\sqrt{h_i}}\,\|\nabla^r \phi_h\|_{L^2(\tau_i)}, \label{tr1invh}
 \end{equation}
 holds.
\end{lemma}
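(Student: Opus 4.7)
The plan is to reduce the inequality to a reference element via the affine map $\mathtt{F}_i\colon \hat{\tau}_i\to\tau_i$, establish the bound for polynomials on $\hat{\tau}_i$ (where the sharp $n$-dependence comes from), and then track the Jacobians back to the physical element.

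First I would handle the case $r=0$. Set $\hat{\phi}:=\phi_h\circ\mathtt{F}_i\in\mathcal{R}_{n_i}(\hat{\tau}_i)$. Writing $\mathtt{F}_i(\hat{x})=B_i\hat{x}+b_i$, shape-regularity of the family $\{\mathtt{F}_i\}$ yields the standard bounds $|\det B_i|\simeq h_i^{d}$ and, for the restriction of $\mathtt{F}_i$ to the face $\hat{e}\subset\partial\hat{\tau}_i$ with image $e_i$, the surface Jacobian behaves like $h_i^{d-1}$. Consequently
\begin{equation*}
\|\phi_h\|_{L^2(\tau_i)}^{2}\;\simeq\;h_i^{d}\,\|\hat{\phi}\|_{L^2(\hat{\tau}_i)}^{2},\qquad
\|\phi_h\|_{L^2(e_i)}^{2}\;\simeq\;h_i^{d-1}\,\|\hat{\phi}\|_{L^2(\hat{e})}^{2}.
\end{equation*}

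Second, on the fixed reference element $\hat{\tau}_i$ I would invoke the polynomial trace inequality: for every $\hat{\psi}\in\mathcal{R}_{n}(\hat{\tau}_i)$ and every face $\hat{e}\subset\partial\hat{\tau}_i$,
\begin{equation*}
\|\hat{\psi}\|_{L^2(\hat{e})}\;\leq\;\hat{C}\,n\,\|\hat{\psi}\|_{L^2(\hat{\tau}_i)},
\end{equation*}
with $\hat{C}$ independent of $n$. On the unit hypercube this follows by expanding $\hat{\psi}$ in a tensor-product Legendre basis and using the one–dimensional boundary-value bound $|L_n(\pm1)|^2\leq(2n+1)/2$, while on the simplex the corresponding Dubiner/Jacobi basis gives the same linear dependence on $n$; this is the classical Warburton–Hesthaven-type estimate cited from \cite{Riviere2001}. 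Combining with the scaling identities above,
\begin{equation*}
\|\phi_h\|_{L^2(e_i)}^{2}\;\lesssim\;h_i^{d-1}\hat{C}^{2}n_i^{2}\|\hat{\phi}\|_{L^2(\hat{\tau}_i)}^{2}\;\lesssim\;h_i^{d-1}\hat{C}^{2}n_i^{2}\,h_i^{-d}\|\phi_h\|_{L^2(\tau_i)}^{2}\;=\;C_{tr,1}^{2}\,\frac{n_i^{2}}{h_i}\,\|\phi_h\|_{L^2(\tau_i)}^{2},
\end{equation*}
which is the claim for $r=0$.

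Third, for $r=1$ I would observe that each component of $\nabla\phi_h$ is itself a polynomial on $\tau_i$ of degree at most $n_i-1$, so applying the $r=0$ result componentwise produces a factor $(n_i-1)h_i^{-1/2}\leq n_i h_i^{-1/2}$; summing over components gives the stated bound. One mild technicality is that the affine pull-back of $\nabla\phi_h$ is $B_i^{-T}\widehat{\nabla}\hat{\phi}$, so the scaling of $\|\nabla\phi_h\|_{L^2(\tau_i)}$ carries an extra factor $|B_i^{-1}|\simeq h_i^{-1}$ on both sides, which cancels and leaves the estimate unchanged.

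The main obstacle, and the only non-routine step, is securing the sharp linear dependence on $n$ in the reference-element trace inequality; once that is in hand, all remaining work is bookkeeping of Jacobians. Since this inequality is cited verbatim from \cite[Lemma 2.1]{Riviere2001}, I would simply quote it and carry out the scaling argument, which is why the paper omits the full derivation.
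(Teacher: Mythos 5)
Your proposal is correct. The paper itself gives no proof of this lemma --- it explicitly skips the proofs of Lemmas \ref{apprxlm3.2}--\ref{tracelemma4.2} and simply cites \cite[Lemma 2.1]{Riviere2001} --- and your argument (affine scaling to the reference element, the $O(n)$ polynomial trace inequality on $\hat{\tau}_i$ via the Legendre/Dubiner expansion, and reduction of the $r=1$ case to $r=0$ applied componentwise to $\nabla\phi_h$) is precisely the standard derivation underlying that citation, with the Jacobian bookkeeping handled correctly.
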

%%%%%%%%%%%%%%%%%%%%%%%%%%
\begin{lemma}[\cite{oden1998}]\label{tracelemma4.2}
Let $\phi \in H^{r+1}(\tau_i),$ $\tau_i \in \mathcal{T}_h$. Then, there is a positive constant $C_{tr,2},$ such that
\begin{equation}\label{2tr3.8}
 \|\phi\|^l_{W^r_l(e_i)} \leq~C_{tr,2}~\big\{h^{-1}_i \|\phi\|^l_{W^r_l(\tau_i)}+\|\phi\|^{l-1}_{W^r_{2l-2}(\tau_i)} \|\nabla^{r+1}\phi\|_{L^2(\tau_i)} \big\},  
\end{equation}
where $r=0,\,1$ and $l=2,\,4$.
\end{lemma}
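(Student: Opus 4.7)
The plan is to prove the multiplicative trace inequality by the standard two-step pattern: establish the inequality on the reference element $\hat{\tau}_i$ via the divergence theorem plus H\"older, then transfer to $\tau_i$ by the affine map $\mathtt{F}_i$ and track the powers of $h_i$ that appear.

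\textbf{Step 1 (Reference element, case $r=0$).} Fix a smooth vector field $\boldsymbol{\eta}\colon\hat{\tau}_i\to\mathbb{R}^d$ with $\boldsymbol{\eta}\cdot\hat{\mathbf{n}}\geq 1$ on $\partial\hat{\tau}_i$ (such a field exists since $\hat{\tau}_i$ is the unit simplex or the unit hypercube). For smooth $\hat{\phi}$, the divergence theorem yields
\begin{align*}
\int_{\partial\hat{\tau}_i}|\hat\phi|^{l}\,d\hat s
 \leq \int_{\partial\hat{\tau}_i}|\hat\phi|^{l}\,\boldsymbol{\eta}\!\cdot\!\hat{\mathbf{n}}\,d\hat s
 = \int_{\hat{\tau}_i}\nabla\!\cdot\!\bigl(|\hat\phi|^{l}\boldsymbol{\eta}\bigr)\,d\hat x
 \leq C\,\|\hat\phi\|_{L^{l}(\hat{\tau}_i)}^{l}
     + C\,l\!\int_{\hat{\tau}_i}|\hat\phi|^{l-1}|\nabla\hat\phi|\,d\hat x.
\end{align*}
Applying H\"older's inequality with exponents $(2l-2)/(l-1)=2$ and $2$ to the last integral gives $\int|\hat\phi|^{l-1}|\nabla\hat\phi|\,d\hat x\leq \|\hat\phi\|_{L^{2l-2}(\hat{\tau}_i)}^{l-1}\|\nabla\hat\phi\|_{L^{2}(\hat{\tau}_i)}$, which is the desired estimate for $r=0$ and $l\in\{2,4\}$. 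Density extends this to $\hat\phi\in H^{1}(\hat{\tau}_i)$.

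\textbf{Step 2 (Case $r=1$).} Apply the $r=0$ inequality componentwise to each first-order derivative $\partial_{\hat x_k}\hat\phi$. Summing over $k$ and over $|\alpha|\leq 1$ produces
\[
 \|\hat\phi\|_{W^{1}_{l}(\partial\hat\tau_i)}^{l}\leq C\bigl\{\|\hat\phi\|_{W^{1}_{l}(\hat\tau_i)}^{l}+\|\hat\phi\|_{W^{1}_{2l-2}(\hat\tau_i)}^{l-1}\|\nabla^{2}\hat\phi\|_{L^{2}(\hat\tau_i)}\bigr\},
\]
and then restrict the left side to the chosen face $\hat{e}_i\subset\partial\hat{\tau}_i$.

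\textbf{Step 3 (Affine scaling back to $\tau_i$).} Set $\hat\phi=\phi\circ\mathtt{F}_i$. Since $\mathtt{F}_i$ is affine and $\tau_i$ is shape-regular with diameter $h_i$, one has $|\det D\mathtt{F}_i|\sim h_i^{d}$, $\|D\mathtt{F}_i\|\sim h_i$, and for any multi-index $\alpha$, $|D^{\alpha}\hat\phi|\sim h_i^{|\alpha|}\,|(D^{\alpha}\phi)\circ\mathtt{F}_i|$. A change of variables gives, for $q\in\{2,l,2l-2\}$,
\[
 \|\hat\phi\|_{W^{r}_{q}(\hat\tau_i)}^{q}\sim h_i^{-d+rq}\,\|\phi\|_{W^{r}_{q}(\tau_i)}^{q},\qquad
 \|\hat\phi\|_{W^{r}_{l}(\hat e_i)}^{l}\sim h_i^{-(d-1)+rl}\,\|\phi\|_{W^{r}_{l}(e_i)}^{l},
\]
and similarly $\|\nabla^{r+1}\hat\phi\|_{L^{2}(\hat\tau_i)}\sim h_i^{-d/2+(r+1)}\|\nabla^{r+1}\phi\|_{L^{2}(\tau_i)}$. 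Substituting into the reference-element bound and dividing through by $h_i^{-(d-1)+rl}$, the first term on the right contributes the factor $h_i^{-1}$ while the mixed term contributes $h_i^{[-d/2+r(l-1)]+[-d/2+(r+1)]-[-(d-1)+rl]}=h_i^{0}$, matching exactly the form \eqref{2tr3.8}.

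\textbf{Main obstacle.} The only delicate point is bookkeeping the $h_i$-powers in the mixed H\"older term: the cancellation that makes the second summand scale-invariant is specific to the pairing $(l,2l-2,2)$ and uses that $(l-1)+1=l$ in the Sobolev index and $2(l-1)+2\cdot 1=2l$ in the integrability weight. Once this is verified for $(r,l)=(0,2),(0,4),(1,2),(1,4)$, the estimate follows with a constant $C_{tr,2}$ depending only on $\hat\tau_i$ and the shape-regularity constant of $\mathcal{T}_h$, and is therefore independent of $h_i$ and $n_i$.
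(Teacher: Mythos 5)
The paper never proves this lemma: it is quoted from Oden--Babu\v{s}ka--Baumann \cite{oden1998} and the authors state explicitly that ``the proofs of the following lemmas are skipped over here,'' so there is no in-paper argument to compare yours against. On its own merits, your strategy --- divergence theorem with a vector field satisfying $\boldsymbol{\eta}\cdot\hat{\mathbf n}\geq 1$ plus H\"older on the reference element, followed by affine scaling --- is the standard route, and your $r=0$ argument is correct for both $l=2$ and $l=4$; the exponent bookkeeping in Step 3 (the mixed term scaling to $h_i^{0}$ and the volume term to $h_i^{-1}$) also checks out for the top-order derivatives.

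The gap is in Step 2. Applying the $r=0$ inequality to each $D^{\alpha}\hat\phi$ and ``summing over $|\alpha|\leq 1$'' does not produce the display you wrote: the $|\alpha|=0$ contribution yields the cross term $\|\hat\phi\|^{l-1}_{L^{2l-2}(\hat\tau_i)}\|\nabla\hat\phi\|_{L^{2}(\hat\tau_i)}$, which involves $\nabla\hat\phi$ rather than $\nabla^{2}\hat\phi$ and cannot be absorbed into $\|\hat\phi\|^{l-1}_{W^{1}_{2l-2}(\hat\tau_i)}\|\nabla^{2}\hat\phi\|_{L^{2}(\hat\tau_i)}$ (take $\hat\phi$ affine and nonconstant: the latter vanishes while the former does not). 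For $l=2$ this is harmless, since $2l-2=l$ makes the cross term at most $\|\hat\phi\|^{2}_{W^{1}_{2}(\hat\tau_i)}$, which after scaling is dominated by the $h_i^{-1}\|\phi\|^{2}_{W^{1}_{2}(\tau_i)}$ term for $h_i\lesssim 1$. For $l=4$, however, the same cross term scales (in the notation of your Step 3) to $h_i^{d/3-1}\|\phi\|^{4}_{W^{1}_{6}(\tau_i)}$, which is neither of the two terms on the right of \eqref{2tr3.8}, so the case $(r,l)=(1,4)$ does not close as written. Your argument is complete if the $W^{r}_{l}$ quantities in \eqref{2tr3.8} are read as seminorms (only $|\alpha|=r$), since then the offending zeroth-order trace never enters; with the full norms, the $\|\phi\|^{4}_{L^{4}(e_i)}$ part for $r=1$, $l=4$ needs a separate estimate rather than the claimed summation. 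Either state and prove the seminorm version (which is what the error analysis actually uses) or supply that extra step.
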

%%%%%%%%%%%%%
\begin{lemma}[\cite{bernardi2003, lasis2003}]
Let $\phi_h\in \mathcal{R}_{n_i}(\tau_i)$ and $l \geq 1$, then there is a positive constant $C_{Inv},$ such that the following inequality
 \begin{equation}
|\phi_h|_{H^l(\tau_i)} \leq~C_{Inv}\frac{n_i^2}{h_i}\, | \phi_h|_{H^{l-1}(\tau_i)}, \label{tr1invh3.8}
 \end{equation}
 holds.
\end{lemma}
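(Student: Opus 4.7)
The plan is to reduce the estimate on $\tau_i$ to the reference element $\hat{\tau}_i$ via the affine map $\mathtt{F}_i$, apply a polynomial inverse inequality in the reference configuration, and then rescale back. Because the map $\mathtt{F}_i$ is affine, the Jacobian $\mathsf{J}_i$ is constant on $\tau_i$ and satisfies the standard shape-regular bounds $\|\mathsf{J}_i\| \lesssim h_i$ and $\|\mathsf{J}_i^{-1}\|\lesssim h_i^{-1}$; consequently for $\hat\phi_h(\hat x):=\phi_h(\mathtt{F}_i(\hat x))$, the chain rule yields, for every integer $k\ge 0$,
\begin{equation*}
|\phi_h|_{H^k(\tau_i)} \;\simeq\; h_i^{\,d/2-k}\,|\hat\phi_h|_{H^k(\hat\tau_i)},
\end{equation*}
with constants depending only on the shape-regularity of $\hat\tau_i$.

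Next, on the fixed reference element $\hat\tau_i$ (either the unit simplex or the unit hypercube), $\hat\phi_h$ belongs to the finite-dimensional space $\mathcal{R}_{n_i}(\hat\tau_i)$. For this class the sharp one-dimensional Markov inequality $\|p'\|_{L^2(-1,1)}\le C n^2\|p\|_{L^2(-1,1)}$ for $p\in\mathcal{P}_n$ extends by tensor-product and trace arguments to the multivariate $L^2$ estimate
\begin{equation*}
|\hat\phi_h|_{H^l(\hat\tau_i)} \;\le\; C\, n_i^{2}\, |\hat\phi_h|_{H^{l-1}(\hat\tau_i)},\qquad l\ge 1,
\end{equation*}
which is the classical $p$-inverse inequality on a fixed domain (this is precisely the ingredient borrowed from \cite{bernardi2003, lasis2003}). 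I would invoke this as a black box rather than rederive Markov's inequality from scratch.

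Finally, combining the scaling relation at levels $k=l$ and $k=l-1$ with the reference-element inverse estimate gives
\begin{equation*}
|\phi_h|_{H^l(\tau_i)} \;\lesssim\; h_i^{\,d/2-l}\,|\hat\phi_h|_{H^l(\hat\tau_i)}
\;\lesssim\; h_i^{\,d/2-l} n_i^{2}\,|\hat\phi_h|_{H^{l-1}(\hat\tau_i)}
\;\lesssim\; \frac{n_i^{2}}{h_i}\,|\phi_h|_{H^{l-1}(\tau_i)},
\end{equation*}
which is \eqref{tr1invh3.8} with a constant $C_{Inv}$ independent of $h_i$ and $n_i$. The only nontrivial ingredient—and the step I would flag as the main obstacle if one were to write a fully self-contained argument—is establishing the sharp $n_i^{2}$ (rather than $n_i^{4}$) dependence on the reference element; the standard route is Markov's inequality for univariate polynomials lifted by a tensor-product reduction in the cube case and by an affine covering / Duffy-type transform in the simplex case. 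Because the lemma cites \cite{bernardi2003, lasis2003}, I would simply quote the polynomial inverse estimate from there and focus the write-up on the scaling argument above.
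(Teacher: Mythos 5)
Your proposal is correct, and it matches the paper's treatment: the paper explicitly skips the proof of this lemma and simply cites \cite{bernardi2003, lasis2003}, which is exactly the ingredient you invoke as a black box for the sharp $n_i^2$ dependence on the reference element. Your added affine-scaling argument (picking up the single factor $h_i^{-1}$ from the mismatch between the $H^l$ and $H^{l-1}$ scalings) is the standard and correct way to localize that reference-element estimate to $\tau_i$, so there is nothing to flag.
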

%%%%%%%%%%%%%%%%%%%%%%%%%%%%%%%%%%%%%%%%%%%%%%%%%%%%%%%%%%%
\subsection{{Existence of a unique discrete solution $u_h^{dg}$ in $\mathcal{S}^{\mathtt{n}}_h(\mathcal{T}_h)$}} Let us first define the inner product and norm on $\mathcal{S}^{\mathtt{n}}_h(\mathcal{T}_h)$, for all $\phi,\; \psi \in \mathcal{S}^{\mathtt{n}}_h(\mathcal{T}_h)$, as
\begin{equation*}
(\phi, \psi)_{\mathcal{E}}:=\sum_{\tau_i \in \mathcal{T}_h}\int_{\tau_i} \nabla \phi \cdot \nabla \psi\,dx 
+ \sum_{{e_i} \in \mathscr{E}_{h}} \int_{e_i} \sigma_i\frac{n_i^2}{|e_i|^\gamma}\sjump{u} \sjump{\phi}\,ds,
\end{equation*}
and
\begin{equation*}
\|\phi\|_{\mathcal{E}}:=\Big(\sum_{\tau \in \mathcal{T}_h}\int_\tau | \nabla \phi|^2\,dx 
+ \sum_{{e_i} \in \mathscr{E}_{h}} \int_{e_i} \sigma_i\frac{n_i^2}{|e_i|^\gamma}|\sjump{\phi}|^2\,ds\Big)^{\frac{1}{2}},
\end{equation*}
respectively.

Before embarking on the existence of a unique discrete discontinuous Galerkin solution using the theory of monotone operators (cf. \cite{necas1986}). We first prove the Lipschitz-continuity of the semi-linear form $\mathcal{A}_h$ in the following lemma.
%%%%%%%%%%%%%%%%%%%%%%%%%%%%%%%%%%%%%%%%%%%%
%%%%%%%%%%%%%%%%%%%%%%%%%%%%%%%%%%%%%%%%%%%%%%%%%%%
\begin{lemma}\label{lpscntofbilinear}
Assume that inequalities \eqref{lpsctsinq}--\eqref{monoinq} and the assumptions $A_1-A_2$ (Sect. \ref{Thp1p2}) hold. For $u_1,\,u_2 \in \mathcal{S}^{\mathtt{n}}_h(\mathcal{T}_h)$, the semi-linear form $\mathcal{A}_h$ is Lipschitz-continuous in the first argument, in the sense that 
\begin{equation}
|\mathcal{A}_h(u_1; u_1, \phi)- \mathcal{A}_h(u_2; u_2, \phi)|\leq C_{31} \|u_1-u_2\|_{\mathcal{E}}\|\phi\|_{\mathcal{E}}, \quad \text{for all} \quad \phi \in \mathcal{S}^{\mathtt{n}}_h(\mathcal{T}_h),
\end{equation}
where $C_{31}=\sqrt{2} \times \,\sqrt{\Big(}\max\big\{1, C^2_{M_3}\big\}+\frac{C^2_{M_3}C_{tr,1}^2\,  (1+\varrho^2\, \delta)}{2\,\sigma_i\,|e_i|^{1-\gamma}}\Big)$, $i\in [1:N_h]$ and $\gamma \geq 1$.
\end{lemma}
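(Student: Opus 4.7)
The plan is to decompose $\mathcal{A}_h(u_1;u_1,\phi)-\mathcal{A}_h(u_2;u_2,\phi) = \mathcal{I}_1 + \mathcal{I}_2 + \mathcal{I}_3 + \mathcal{I}_4$ according to the four summands in \eqref{semilnrform3.1} (volume, primal flux with $\sjump{\phi}$, adjoint flux with $\sjump{u_h^{dg}}$, and penalty), and to control each by a multiple of $\|u_1-u_2\|_{\mathcal{E}}\|\phi\|_{\mathcal{E}}$. Throughout, abbreviate $\mathscr{G}_j := \mathscr{G}(|\nabla u_j|)$ and exploit the identity $\mathscr{G}(|v|)v=\mathcal{J}(x,v)$, so that the Lipschitz bound \eqref{lpsctsinq} reads $|\mathscr{G}_1\nabla u_1-\mathscr{G}_2\nabla u_2|\leq C_{M_3}|\nabla(u_1-u_2)|$; this is the workhorse for the volume and primal-flux pieces.

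The volume piece $\mathcal{I}_1$ is handled by Cauchy--Schwarz together with the Lipschitz identity, giving $|\mathcal{I}_1|\leq C_{M_3}\|u_1-u_2\|_{\mathcal{E}}\|\phi\|_{\mathcal{E}}$, while the penalty piece $\mathcal{I}_4$ is handled directly by Cauchy--Schwarz in the weighted edge inner product, yielding $|\mathcal{I}_4|\leq \|u_1-u_2\|_{\mathcal{E}}\|\phi\|_{\mathcal{E}}$. Taken together, these two contributions produce the $\sqrt{2\max\{1,C_{M_3}^2\}}$ portion of $C_{31}$ after the elementary inequality $(a+b)^2\leq 2(a^2+b^2)$.

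For the primal-flux piece $\mathcal{I}_2$ the integrand contains the factor $\smean{[\mathcal{J}(\nabla u_1)-\mathcal{J}(\nabla u_2)]\cdot\bar{\bf n}}$, which the Lipschitz estimate bounds by $C_{M_3}|\smean{\nabla(u_1-u_2)}|$. I would then attach the weight $\sqrt{\sigma_i n_i^2/|e_i|^\gamma}$ to $\sjump{\phi}$ so that it is absorbed into the edge portion of $\|\phi\|_{\mathcal{E}}$, and apply the discrete trace inequality of Lemma \ref{t2lemma4.3} to each of the two neighbouring elements contributing to $\smean{\nabla(u_1-u_2)}$. The bounded local variation hypotheses $A_1$ and $A_2$ are then used to exchange $n_j^2/n_i^2$ for $\varrho^2$ and $1/h_j$ for $\delta/h_i$ on the neighbour side; this is precisely the manipulation that generates the factor $(1+\varrho^2\delta)$ and $C_{tr,1}^2$ inside the square root of the stated constant.

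The main obstacle is the adjoint-flux piece $\mathcal{I}_3$, because here the nonlinearity $\mathscr{G}_j$ multiplies $\nabla\phi$ rather than $\nabla u_j$, and it is jumps of $u_j$ (not of $\phi$) that occur, so the Lipschitz bound on $\mathcal{J}$ does not apply out of the box. My plan is to rearrange
\[
\mathscr{G}_1\nabla\phi\,\sjump{u_1}-\mathscr{G}_2\nabla\phi\,\sjump{u_2} = \mathscr{G}_1\nabla\phi\,\sjump{u_1-u_2} + (\mathscr{G}_1-\mathscr{G}_2)\nabla\phi\,\sjump{u_2},
\]
invoke $|\mathscr{G}_j|\leq C_{B_1}$ from \eqref{hbdd} on the first term and the mean-value estimate $|\mathscr{G}_1-\mathscr{G}_2|\leq C_{B_2}|\nabla(u_1-u_2)|$ on the second, and in each case apply the trace inequality to $\nabla\phi$ together with the penalty-weight trick to absorb the remaining jump into $\|u_1-u_2\|_{\mathcal{E}}$. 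The resulting bound is structurally identical to that for $\mathcal{I}_2$, so the two contribute the $\frac{C_{M_3}^2C_{tr,1}^2(1+\varrho^2\delta)}{2\sigma_i|e_i|^{1-\gamma}}$ term of $C_{31}^2/2$ once the constants are collapsed into the global Lipschitz constant $C_{M_3}$. Finally, a discrete Cauchy--Schwarz applied to the pair of grouped contributions $(\mathcal{I}_1+\mathcal{I}_4)$ and $(\mathcal{I}_2+\mathcal{I}_3)$ delivers the stated constant $C_{31}$.
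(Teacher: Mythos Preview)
Your treatment of $\mathcal{I}_1$, $\mathcal{I}_2$, and $\mathcal{I}_4$ matches the paper's argument for $I_1$, $I_2$, $I_4$ essentially line for line, including the use of the trace inequality and the local-variation factors $(1+\varrho^2\delta)$ that arise when passing between neighbouring elements.

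The gap is in $\mathcal{I}_3$. In your additive splitting
\[
\mathscr{G}_1\nabla\phi\,\sjump{u_1}-\mathscr{G}_2\nabla\phi\,\sjump{u_2}
=\mathscr{G}_1\nabla\phi\,\sjump{u_1-u_2}+(\mathscr{G}_1-\mathscr{G}_2)\nabla\phi\,\sjump{u_2},
\]
the second summand, after invoking $|\mathscr{G}_1-\mathscr{G}_2|\leq C_{B_2}|\nabla(u_1-u_2)|$, carries a factor $\sjump{u_2}$ that has nothing to do with $u_1-u_2$; it cannot be ``absorbed into $\|u_1-u_2\|_{\mathcal{E}}$'' as you write. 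The resulting edge integral contains the triple product $|\nabla(u_1-u_2)|\,|\nabla\phi|\,|\sjump{u_2}|$, so any bound you obtain will depend on $\|u_2\|_{\mathcal{E}}$ and hence fail to give a uniform Lipschitz constant $C_{31}$. (Your remark that the various constants can be ``collapsed into $C_{M_3}$'' does not repair this, since the issue is an uncontrolled factor, not a constant.)

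The paper avoids this problem by handling $I_3$ differently: rather than splitting additively, it rewrites the integrand so that the nonlinearity acts on the jumps themselves, replacing $\smean{\mathscr{G}(|\nabla u_j|)\nabla\phi\cdot\bar{\bf n}}\sjump{u_j}$ by $\smean{\nabla\phi\cdot\bar{\bf n}}\,\mathscr{G}(h^{-1}\sjump{u_j})\sjump{u_j}$ (the ``resemblance'' between $\mathscr{G}(h^{-1}|\sjump{\varphi}|)$ and $\mathscr{G}(|\nabla\varphi|)$ is invoked explicitly). This makes the difference a genuine $\mathcal{J}$-difference in the jump variable, so the Lipschitz bound \eqref{lpsctsinq} applies directly and produces $C_{M_3}|\sjump{u_1-u_2}|$, after which $\smean{\nabla\phi\cdot\bar{\bf n}}$ is controlled by the trace inequality exactly as in $I_2$. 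That is the step you need to reproduce; your current $\mathcal{I}_3$ argument does not close.
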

\begin{proof}
For any $u_1,\,u_2\in \mathcal{S}^{\mathtt{n}}_h(\mathcal{T}_h)$, and the use of Eq. \eqref{semilnrform3.1} to have 
\begin{eqnarray}
&&\big|\mathcal{A}_h(u_1;u_1, \phi)-\mathcal{A}_h(u_2;u_2, \phi)\big|
\leq~ \sum_{\tau_i \in \mathcal{T}_h}\int_{\tau}\big|\mathscr{G}(|\nabla u_1|)\nabla u_1-\mathscr{G}(|\nabla u_2|)\nabla u_2\big|\big|\nabla \phi \big|\,dx\nonumber\\
&&~~~~+\sum_{e_i \in \mathscr{E}_{int, h}} \int_{e_i} \big|\smean{\mathscr{G}(|\nabla u_1|)\nabla u_1 \cdot \bar{\bf n}}-\smean{\mathscr{G}(|\nabla u_2|)\nabla u_2 \cdot \bar{\bf n}}\big| \big|\sjump{\phi}\big|\,ds\nonumber\\
&&~~~~+\sum_{e_i \in \mathscr{E}_{int, h}} \int_{e_i} \big|\smean{\mathscr{G}(|\nabla u_1|)\nabla \phi \cdot \bar{\bf n}}\sjump{u_1}-\smean{\mathscr{G}(|\nabla u_2|)\nabla \phi \cdot \bar{\bf n}}\sjump{u_2}\big|\,ds\nonumber\\
&&~~~~+ \sum_{e_i \in \mathscr{E}_{h}} \int_{e_i} \sigma_i\frac{n_i^2}{|e_i|^\gamma} \big|\sjump{u_1-u_2}\big| \big| \sjump{\phi}\big|\,ds~=: I_1+I_2+I_3+I_4. \quad (Say) \label{sumofbdd}
\end{eqnarray}
We now bind each of the terms $I_k,\;k=1,\,2,\;3,\;4,$ independently. Firstly, we will examine $I_1$ using the Cauchy-Schwartz inequality and the Lipschitz continuity  \eqref{lpsctsinq} with the fact $|\nabla \varphi \cdot \bar{\bf n}|\leq |\nabla \varphi |$, for any $\varphi \in \mathcal{S}^{\mathtt{n}}_h(\mathcal{T}_h)$, as
\begin{eqnarray}
I_1&=&\sum_{\tau_i \in \mathcal{T}_h}\int_{\tau_i}\big|\mathscr{G}(|\nabla u_1|)\nabla u_1-\mathscr{G}(|\nabla u_2|)\nabla u_2\big|\big|\nabla \phi \big|\,dx\nonumber\\
&\leq& C_{M_3}\,\Big(\sum_{\tau_i \in \mathcal{T}_h}\|\nabla (u_1-u_2)\|^2_{L^2(\tau_i)}\Big)^{\frac{1}{2}} \times \Big(\sum_{\tau_i \in \mathcal{T}_h}\|\nabla \phi\|^2_{L^2(\tau_i)}\Big)^{\frac{1}{2}}.\nonumber
\end{eqnarray}
The bound of the term $I_2$ is obtained by applying the Cauchy-Schwartz inequality, and hence 
\begin{eqnarray}
I_2 &\leq&\Big(\sum_{e_i \in \mathscr{E}_{int, h}} \|\smean{(\mathscr{G}(|\nabla u_1|)\nabla u_1-\mathscr{G}(|\nabla u_2|)\nabla u_2) \cdot \bar{\bf n})}\|^2_{L^2(e_i)}\Big)^{\frac{1}{2}}\times\Big(\sum_{e_i \in \mathscr{E}_{int, h}} \|\sjump{\phi}\|^2_{L^2(e_i)}\Big)^{\frac{1}{2}}. \nonumber \label{intdiri3.9}
\end{eqnarray}
To compute the term $\smean{(\mathscr{G}(|\nabla u_1|)\nabla u_1-\mathscr{G}(|\nabla u_2|)\nabla u_2) \cdot \bar{\bf n})}$ on the interior edges, i.e., $e_i\in \mathscr{E}_{int,h}$, we apply the trace inequality \eqref{tr1invh}, Lipschitz continuity \eqref{lpsctsinq} 
and the local bounded variation properties $A_1-A_2$ (Sect.  \ref{Thp1p2}), to have
\begin{eqnarray}
&&\sum_{e_i \in \mathscr{E}_{int,h}} \|\smean{(\mathscr{G}(|\nabla u_1|)\nabla u_1-\mathscr{G}(|\nabla u_2|)\nabla u_2) \cdot \bar{\bf n})}\|^2_{L^2(e_i)}\nonumber\\ &&~~~~~~~~\leq ~\sum_{\tau_i \in \mathscr{T}_{h}}\frac{C_{tr,1}^2C_{M_3}^2n^2_{i}(1+\varrho^2 \, \delta)}{2h_{i}} \times \|\nabla (u_1-u_2)\|^2_{L^2(\tau_i)}.\label{intedg3.10}
\end{eqnarray}
Utilizing the bound \eqref{intdiri3.9}, we get 
\begin{eqnarray}
 I_2~\leq~ \Big(\sum_{\tau_i \in \mathscr{T}_{h}}\frac{C_{tr,1}^2C_{M_3}^2(1+\varrho^2\, \delta)}{2\sigma_i|e_i|^{1-\gamma}}\,\|\nabla (u_1-u_2)\|^2_{L^2(\tau_i)}\Big)^{\frac{1}{2}}\times\Big(\sum_{e_i \in \mathscr{E}_{int, h}} \sigma_i\frac{n_i^2}{|e_i|^\gamma}\|\sjump{\phi}\|^2_{L^2(e_i)}\Big)^{\frac{1}{2}}.\nonumber 
\end{eqnarray}
Since $|e_i|\leq h_i,\; \forall \, i$.
%%%%%%%%%%%%%%%%
By applying the Cauchy-Schwartz inequality to the component $I_3$, one may obtain
\begin{eqnarray}
I_3&=&\sum_{e_i \in \mathscr{E}_{int, h}} \int_{e_i} \big|\smean{\mathscr{G}(|\nabla u_1|)\nabla \phi \cdot \bar{\bf n}}\sjump{u_1}-\smean{\mathscr{G}(|\nabla u_2|)\nabla \phi \cdot \bar{\bf n}}\sjump{u_2}\big|\,ds\nonumber\\
&=& \sum_{e_i \in \mathscr{E}_{int, h}} \int_{e_i} \big| \smean{\nabla \phi \cdot\bar{\bf n}} \big|\, \big|\mathscr{G}(h^{-1}\sjump{u_1})\sjump{u_1}-\mathscr{G}(h^{-1} \sjump{u_2})\sjump{u_2}\big|\, ds \nonumber\\
&\leq& \Big(\sum_{e_i \in \mathscr{E}_{int, h}}C_{M_3}^2 \| \smean{\nabla \phi \cdot \bar{\bf n}}\|^2_{L^2(e_i)}\Big)^{\frac{1}{2}} \times \Big( \sum_{e_i \in \mathscr{E}_{int, h}} \|\sjump{u_1-u_2}\|^2_{L^2(e_i)} \Big)^{\frac{1}{2}}. \label{B-3-bd3.15}
\end{eqnarray}
Since the term $\mathscr{G}(h^{-1}|\sjump{\varphi}|)$ is resemble to $\mathscr{G}(|\nabla \varphi|)$, for any $\varphi \in \mathcal{S}^{\mathtt{n}}_h(\mathcal{T}_h)$. Evaluating the term $\smean{\nabla \phi \cdot \bar{\bf n}}$ on the interior edges, (say)  $e_{ij} \in \mathscr{E}_{int,h}$,  applying the trace inequality \eqref{tr1invh} to determine
\begin{eqnarray}
\|\smean{\nabla \phi \cdot \bar{\bf n}}\|^2_{L^2(e_{ij})}
&\leq&\frac{1}{2}\|\mathscr{G}(|\sjump{\phi}|)\nabla \phi \cdot \bar{\bf n}|_{\tau_i^e} \|^2_{L^2(e_{ij})}+\frac{1}{2}\|\mathscr{G}(|\sjump{\phi}|)\nabla \phi \cdot \bar{\bf n}|_{\tau_j^e}\|^2_{L^2(e_{ij})}\nonumber\\
&\leq&\frac{C_{tr,1}^2n_i^2(1+\varrho^2\,\delta)}{2h_i}\|\nabla \phi\|^2_{L^2(\tau_i)}.\label{b3bd3.1613}
\end{eqnarray}
From \eqref{B-3-bd3.15} and \eqref{b3bd3.1613}, we obtain 
\begin{eqnarray}
I_3&\leq& \Big( \sum_{\tau_i \in \mathcal{T}_h}\frac{C_{M_3}^2C_{tr,1}^2(1+\varrho^2\,\delta)}{2\sigma_i|e_i|^{1-\gamma}}\,\|\nabla \phi\|^2_{L^2(\tau_i)}\Big)^{\frac{1}{2}} \times \Big( \sum_{e_i \in \mathscr{E}_{int, h}} \sigma_i\frac{n_i^2}{|e_i|^\gamma}\, 
 \|\sjump{u_1-u_2}\|^2_{L^2(e_i)} \Big)^{\frac{1}{2}}. \label{B3bdd}\nonumber
\end{eqnarray}
The last term, $I_4$, is ultimately determined utilizing the Cauchy-Schwartz inequality as
\begin{eqnarray}
I_4~\leq~\Big(\sum_{e_i \in \mathscr{E}_h} \sigma_i\frac{n_i^2}{|e_i|^\gamma} \|\sjump{u_1-u_2}\|^2_{L^2(e_i)} \Big)^{\frac{1}{2}}\times \Big(\sum_{e_i \in \mathscr{E}_h} \sigma_i\frac{n_i^2}{|e_i|^\gamma}\|\sjump{\phi}\|^2_{L^2(e_i)} \Big)^{\frac{1}{2}}.
\end{eqnarray}
Combine all bounds $I_1-I_4$ with utilizing \eqref{sumofbdd}, and then adjusting the terms to have 

\begin{eqnarray}
&&\big|\mathcal{A}_h(u_1;u_1, \phi)-\mathcal{A}_h(u_2;u_2, \phi)\big| \leq~\sqrt{2}\,\Big(\sum_{\tau_i \in \mathscr{T}_{h}} \Big\{C_{M_3}^2+\frac{C_{tr,1}^2 C_{M_3}^2 (1+\varrho^2\, \delta)}{2\sigma_i|e_i|^{1-\gamma}} \Big\}\nonumber\\
&&~~~~~~~~~ \times \|\nabla (u_1-u_2)\|^2_{L^2(\tau_i)}+\sum_{e_i \in \mathscr{E}_h}\Big\{1+\frac{C_{M_3}^2C_{tr,1}^2  (1+\varrho^2\, \delta)}{2\sigma_i|e_i|^{1-\gamma}} \Big\}\, \sigma_i \frac{n_i^2}{|e_i|^\gamma} \|\sjump{u_1-u_2}\|^2_{L^2(e_i)}\Big)^{\frac{1}{2}}\nonumber\\
&&~~~~~~~~~\times\Big(\sum_{\tau_i \in \mathscr{T}_{h}}  \|\nabla \phi\|^2_{L^2(\tau_i)}+\sum_{e_i \in \mathscr{E}_{h}} \sigma_i \frac{n_i^2}{|e_i|^\gamma} \|\sjump{\phi}\|^2_{L^2(e_i)}\Big)^{\frac{1}{2}}.\nonumber
\end{eqnarray}
Implementing $C^2_{31}=2\times \max\Big\{ C_{M_3}^2+\frac{C_{tr,1}^2 C_{M_3}^2 (1+\varrho^2\, \delta)}{2\sigma_i|e_i|^{1-\gamma}},\, 1+\frac{C_{M_3}^2C_{tr,1}^2 (1+\varrho^2\, \delta)}{2\sigma_i|e_i|^{1-\gamma}}\Big\}$, this achieves the required inequality.
\end{proof}
%%%%%%%%%%%%%%555555555555555555555555555555555555555555555555
The following lemma shows the monotonicity condition of the operator $\mathcal{A}_h(\cdot, \cdot)$.
%%%%%%%%%%%%%%%%%%%%%%%%%%%%%%%%%%%%
%%%%%%%%%%%%%%%%%%%%%%%%%%%%%%%%%%%%%%%%%%%%%%%%%%%
\begin{lemma}\label{lm3.23mono}
Assume that the inequalities \eqref{lpsctsinq}--\eqref{monoinq} and the assumptions $A_1-A_2$ (Sect. \ref{Thp1p2}) hold. Further, let $\sigma_i>4\,|e_i|^{\gamma-1}C_{tr,1}^2C_{M_3}^2C_{M_4}^{-1}(1+\varrho^2\, \delta)$ with $\varrho,\, \delta,\,\mu>0$ and $\gamma \geq 1$, then the semi-linear form $\mathcal{A}_h(\cdot, \cdot)$ satisfies the strong monotonicity condition, for all $\phi_1,\,\phi_2 \in \mathcal{S}^{\mathtt{n}}_h(\mathcal{T}_h)$, in the sense that 
\begin{equation}
\mathcal{A}_h(\phi_1; \phi_1, \phi_1-\phi_2)- \mathcal{A}_h(\phi_2; \phi_2, \phi_1-\phi_2)~\geq~ C_{32}\,  \|\phi_1-\phi_2\|^2_{\mathcal{E}},\label{stmonocd3.19} %\quad \text{for all} \quad \phi \in \mathcal{S}^{\mathtt{n}}_h(\mathcal{T}_h), 
\end{equation}
where $C_{32}=\min \big\{\frac{C_{M_4}}{2},\, 1-\frac{4\,C_{tr,1}^2C_{M_3}^2\,(1+\varrho^2\, \delta)}{C_{M_4}\sigma_i\,|e_i|^{1-\gamma}}\big\}$, for $i\in [1:N_h]$.
\end{lemma}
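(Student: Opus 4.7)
The plan is to expand $\mathcal{A}_h(\phi_1;\phi_1,\phi_1-\phi_2)-\mathcal{A}_h(\phi_2;\phi_2,\phi_1-\phi_2)$ termwise according to the definition \eqref{semilnrform3.1}, producing four contributions $J_1, J_2, J_3, J_4$ from the volume integral, the consistency edge term, the symmetry edge term, and the interior penalty, respectively. The volume piece is bounded below immediately: applying the strong monotonicity inequality \eqref{monoinq} pointwise on each $\tau_i$ with $v_1 = \nabla\phi_1$, $v_2 = \nabla\phi_2$ yields $J_1 \ge C_{M_4}\sum_{\tau_i \in \mathcal{T}_h}\|\nabla(\phi_1-\phi_2)\|^2_{L^2(\tau_i)}$. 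The penalty piece is, by construction, exactly $J_4 = \sum_{e_i \in \mathscr{E}_h}\sigma_i n_i^2/|e_i|^\gamma \|\sjump{\phi_1-\phi_2}\|^2_{L^2(e_i)}$, which already accounts for the jump part of $\|\phi_1-\phi_2\|^2_{\mathcal{E}}$.

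The two sign-indefinite edge terms $J_2$ and $J_3$ must then be absorbed into these two positive quantities. I would estimate each by mimicking the strategy of Lemma \ref{lpscntofbilinear}: apply Cauchy--Schwarz on each edge, use the trace inequality \eqref{tr1invh} with the local bounded variation assumptions $A_1,A_2$ to transfer the normal derivative factor from $e_{ij}$ to the adjacent elements, and invoke the Lipschitz-type bound \eqref{lpsctsinq} (with the same surrogate $\mathscr{G}(h^{-1}|\sjump{\cdot}|)\sjump{\cdot}$ used in the symmetry piece of Lemma \ref{lpscntofbilinear}) to dominate the nonlinear differences by $C_{M_3}$ times the gradient or the jump of $\phi_1-\phi_2$. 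After using $|e_i| \le h_i$, one arrives at a combined estimate of the shape
\begin{equation*}
|J_2| + |J_3| \le 2\Bigl(\sum_{\tau_i \in \mathcal{T}_h}\frac{C_{M_3}^2 C_{tr,1}^2 (1+\varrho^2\delta)}{2\,\sigma_i |e_i|^{1-\gamma}}\|\nabla(\phi_1-\phi_2)\|^2_{L^2(\tau_i)}\Bigr)^{1/2}\,Y^{1/2},
\end{equation*}
where $Y$ denotes the penalty sum making up the jump portion of $\|\phi_1-\phi_2\|^2_{\mathcal{E}}$.

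The final step is Young's inequality applied to this product, with the parameter chosen so that the resulting gradient contribution is no larger than $\tfrac12 C_{M_4}\sum_{\tau_i}\|\nabla(\phi_1-\phi_2)\|^2_{L^2(\tau_i)}$, i.e.\ exactly half of the monotonicity margin from $J_1$. The residual coefficient on $Y$ then takes the form $1 - c\,(\sigma_i|e_i|^{1-\gamma})^{-1}$ with $c$ proportional to $C_{M_3}^2 C_{tr,1}^2 (1+\varrho^2\delta)/C_{M_4}$, and is strictly positive precisely under the hypothesis on $\sigma_i$ stated in the lemma; identifying the worst of the two surviving coefficients gives $C_{32}$. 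The step I expect to be most delicate is the handling of $J_3$: the nonlinearity inside the average is evaluated at $|\nabla\phi_1|$ in one summand and at $|\nabla\phi_2|$ in the other, so it does not factor cleanly through the linear average and jump operators, and the surrogate identification from Lemma \ref{lpscntofbilinear} must be applied carefully to ensure that the constants line up with the announced form of $C_{32}$ (in particular, that the $\sigma_i$-threshold emerges with the factor $4/C_{M_4}$).
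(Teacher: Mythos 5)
Your proposal follows the paper's proof essentially line for line: the same four-term decomposition, the monotonicity bound \eqref{monoinq} on the volume term, the trace-inequality/local-variation/Lipschitz treatment of the two indefinite edge terms exactly as in the bounds for $I_2$ and $I_3$ of Lemma \ref{lpscntofbilinear}, and the concluding $\epsilon$-Young step calibrated so the gradient loss is $\tfrac12 C_{M_4}$, which yields precisely the stated $C_{32}$ and $\sigma_i$-threshold. The delicate point you flag (the symmetry term with the nonlinearity evaluated at different arguments) is handled in the paper by the same surrogate identification, so no further comment is needed.
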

\begin{proof}
For $\phi_1,\,\phi_2 \in \mathcal{S}^{\mathtt{n}}_h(\mathcal{T}_h)$, we have
\begin{eqnarray}
&&\mathcal{A}_h(\phi_1; \phi_1, \phi_1-\phi_2)-\mathcal{A}_h(\phi_2; \phi_2, \phi_1-\phi_2)\nonumber\\
&&~~~= \sum_{\tau_i \in \mathcal{T}_h}\int_{\tau_i}\big(\mathscr{G}(|\nabla \phi_1|)\nabla \phi_1-\mathscr{G}(|\nabla \phi_2|)\nabla \phi_2\big)\cdot \nabla (\phi_1-\phi_2)\,dx\nonumber\\
&&~~~-\sum_{e_i \in \mathscr{E}_{int, h}} \int_{e_i} \big(\smean{\mathscr{G}(|\nabla \phi_1|)\nabla \phi_1 \cdot \bar{\bf n}}-\smean{\mathscr{G}(|\nabla \phi_2|)\nabla \phi_2 \cdot \bar{\bf n}}\big) \sjump{\phi_1-\phi_2}\,ds\nonumber\\
&&~~~- \sum_{e_i \in \mathscr{E}_{int, h}} \int_{e_i} \big(\smean{\mathscr{G}(|\nabla \phi_1|)\nabla (\phi_1-\phi_2) \cdot \bar{\bf n}}\sjump{\phi_1}-\smean{\mathscr{G}(|\nabla \phi_2|)\nabla (\phi_1-\phi_2) \cdot \bar{\bf n}}\sjump{\phi_2}\big)\,ds \nonumber\\
&&~~~+ \sum_{e_i \in \mathscr{E}_{h}} \int_{e_i} \sigma_i \,\frac{n_i^2}{|e_i|^\gamma}\, \sjump{\phi_1-\phi_2}\, \sjump{\phi_1-\phi_2}\, ds. \nonumber
\end{eqnarray}    
Utilizing the inequality \eqref{monoinq}, one  may get
\begin{eqnarray}
&&\mathcal{A}_h(\phi_1; \phi_1, \phi_1-\phi_2)-\mathcal{A}_h(\phi_2; \phi_2, \phi_1-\phi_2)
\geq ~C_{M_4}\sum_{\tau_i \in \mathcal{T}_h}\int_{\tau_i}|\nabla(\phi_1-\phi_2)|^2\,dx \nonumber\\
&&~~~-\sum_{e_i \in \mathscr{E}_{int, h}} \int_{e_i} \big(\smean{\mathscr{G}(|\nabla \phi_1|)\nabla \phi_1 \cdot \bar{\bf n}}-\smean{\mathscr{G}(|\nabla \phi_2|)\nabla \phi_2 \cdot \bar{\bf n}}\big) \sjump{\phi_1-\phi_2}\,ds\nonumber\\
&&~~~- \sum_{e_i \in \mathscr{E}_{int, h}} \int_{e_i} \big(\smean{\mathscr{G}(|\nabla \phi_1|)\nabla (\phi_1-\phi_2) \cdot \bar{\bf n}}\sjump{\phi_1}-\smean{\mathscr{G}(|\nabla \phi_2|)\nabla (\phi_1-\phi_2) \cdot \bar{\bf n}}\sjump{\phi_2}\big)\,ds 
\nonumber\\
&&~~~+ \sum_{e_i \in \mathscr{E}_{h}} \int_{e_i} \sigma_i\, \frac{n_i^2}{|e_i|^\gamma}\, |\sjump{\phi_1-\phi_2}|^2\,ds\nonumber\\ &&~~~:= I_5+I_6+I_7+I_8. \label{blbdd3.20}
\end{eqnarray}
We bind the terms $I_6$ and $I_7$ of  \eqref{blbdd3.20} on the right-hand side terms by making an argument very similar to the bound of $I_2$ and $I_3$ as in Lemma \ref{lpscntofbilinear}. Initially, we investigate the bound of the expression $I_6$ as
\begin{eqnarray}
I_6&=&\sum_{e_i \in \mathscr{E}_{int, h}} \int_{e_i} \big(\smean{\mathscr{G}(|\nabla \phi_1|)\nabla \phi_1 \cdot \bar{\bf n}}-\smean{\mathscr{G}(|\nabla \phi_2|)\nabla \phi_2 \cdot \bar{\bf n}}\big) \, \sjump{\phi_1-\phi_2}\,ds\nonumber\\
&\leq&\Big(\sum_{\tau_i \in \mathscr{T}_{h}}\frac{C_{tr,1}^2C_{M_3}^2\,(1+\varrho^2\, \delta)}{2\, \sigma_i\,|e_i|^{1-\gamma}}\,\|\nabla (\phi_1-\phi_2)\|^2_{L^2(\tau_i)}\Big)^{\frac{1}{2}}\Big(\sum_{e_i \in \mathscr{E}_{int, h}}\sigma_i\,\frac{n_i^2}{|e_i|^\gamma}\,\|\sjump{\phi_1-\phi_2}\|^2_{L^2(e_i)}\Big)^{\frac{1}{2}}.\nonumber
\end{eqnarray}
One may bound the term $I_7$ by arguing the same lines as $I_3$ to have
\begin{eqnarray}
I_7&=&\sum_{e_i \in \mathscr{E}_{int, h}} \int_{e_i} \big(\smean{\mathscr{G}(|\nabla \phi_1|)\nabla (\phi_1-\phi_2) \cdot \bar{\bf n}}\sjump{\phi_1}-\smean{\mathscr{G}(|\nabla \phi_2|)\nabla (\phi_1-\phi_2) \cdot \bar{\bf n}}\sjump{\phi_2}\big)\,ds \nonumber\\
&\leq&\Big( \sum_{\tau_i \in \mathcal{T}_h}\frac{C_{tr,1}^2C_{M_3}^2(1+\varrho^2\,\delta)}{2\, \sigma_i\, |e_i|^{1-\gamma}}\,\|\nabla (\phi_1-\phi_2)\|^2_{L^2(\tau_i)}\Big)^{\frac{1}{2}} \Big(\sum_{e_i \in \mathscr{E}_{int, h}}\sigma_i\,\frac{n_i^2}{|e_i|^\gamma}\,\|\sjump{\phi_1-\phi_2}\|^2_{L^2(e_i)}\Big)^{\frac{1}{2}}.\nonumber
\end{eqnarray}
Thus, Eq. \eqref{blbdd3.20} implies
\begin{eqnarray}
&&\mathcal{A}_h(\phi_1; \phi_1, \phi_1-\phi_2)-\mathcal{A}_h(\phi_2; \phi_2, \phi_1-\phi_2)
\geq C_{M_4}\sum_{\tau_i \in \mathcal{T}_h}\|\nabla (\phi_1-\phi_2)\|^2_{L^2(\tau_i)} \nonumber\\
&&~~~- \Big(\sum_{\tau_i \in \mathscr{T}_{h}}\frac{2C_{tr,1}^2C_{M_3}^2\,(1+\varrho^2\, \delta)}{\sigma_i\,|e_i|^{1-\gamma}}\,\|\nabla (\phi_1-\phi_2)\|^2_{L^2(\tau_i)}\Big)^{\frac{1}{2}}\Big(\sum_{e_i \in \mathscr{E}_{h}} \sigma_i\,\frac{n_i^2}{|e_i|^\gamma}\,\|\sjump{\phi_1-\phi_2}\|^2_{L^2(e_i)}\Big)^{\frac{1}{2}}\nonumber\\
&&~~~+ \sum_{e_i \in \mathscr{E}_{h}} \sigma_i\, \frac{n_i^2}{|e_i|^\gamma}\, \|\sjump{\phi_1-\phi_2}\|^2_{L^2(e_i)}. \nonumber
\end{eqnarray}
An application of the $\epsilon-$form of Young's inequality to the second term on the right-hand side, followed by the arrangement of the terms, one may obtain
\begin{eqnarray}
&&\mathcal{A}_h(\phi_1; \phi_1, \phi_1-\phi_2)-\mathcal{A}_h(\phi_2; \phi_2, \phi_1-\phi_2)\geq \sum_{\tau_i \in \mathcal{T}_h}C_{M_4}\,\|\nabla (\phi_1-\phi_2)\|^2_{L^2(\tau_i)} 
\nonumber\\
&&~~~- \sum_{\tau_i \in \mathscr{T}_{h}}\frac{4\,C_{tr,1}^2C_{M_3}^2\,(1+\varrho^2\, \delta)}{\epsilon\, \sigma_i\,|e_i|^{1-\gamma}}\,\|\nabla (\phi_1-\phi_2)\|^2_{L^2(\tau_i)}\nonumber\\
&&~~~ +\sum_{e_i \in \mathscr{E}_{h}} \sigma_i\,\frac{(1-\frac{\epsilon}{2})\, n_i^2}{|e_i|^\gamma}\, \|\sjump{\phi_1-\phi_2}\|^2_{L^2(e_i)}. \nonumber
\end{eqnarray}
Selecting $\epsilon=\frac{8\,C_{tr,1}^2C_{M_3}^2(1+\varrho^2\, \delta)}{C_{M_4}\,\sigma_i\,|e_i|^{1-\gamma}}$ and $C_{32}=\min \big\{C_{M_4}-\frac{4\,C_{tr,1}^2C_{M_3}^2\,(1+\varrho^2\, \delta)}{\epsilon\,\sigma_i\,|e_i|^{1-\gamma}},\quad  1-\frac{\epsilon}{2} \big\}$, this implies the desired inequality \eqref{stmonocd3.19}.
\end{proof}
%%%%%%%%%%%%%%%%%%%%%%%%%%%%%%%%%%
As a consequence,  the finite dimensional space $\mathcal{S}^{\mathtt{n}}_h(\mathcal{T}_h)$ is a Hilbert space equipped with the norm $\| \cdot\|_{\mathcal{E}}$ induced by the inner product space $(\cdot,\cdot)_{\mathcal{E}}$, which is easily demonstrable. Furthermore, all norms are comparable in the finite-dimensional space. 

Let us define the norm  $|\|\cdot|\|$ on $\mathcal{S}^{\mathtt{n}}_h(\mathcal{T}_h)$ by
\begin{equation*}
|\|\psi |\|:=\Big(\sum_{\tau_i \in \mathcal{T}_h}\int_{\tau_i} (| \nabla \psi|^2+|\psi|^2)\,dx+ \sum_{{e_i} \in \mathscr{E}_{int, h}} \int_{e_i} \sigma_i\frac{n_i^2}{|e_i|^\gamma}|\sjump{\psi}|^2\,ds+\sum_{e\in \mathscr{E}_{bd,h}}\int_e  |\psi|^2\,ds\Big)^{\frac{1}{2}},
\end{equation*}
which is equivalent to the norm $\|\cdot\|_{\mathcal{E}}$. Hence, $\exists$ a constant $C_{33}>0$ such that 
\begin{equation}
 \|\psi\|_{\mathcal{E}}\leq |\|\psi |\|\leq C_{33}\|\psi\|_{\mathcal{E}} \quad \text{for all} \quad \phi \in \mathcal{S}^{\mathtt{n}}_h(\mathcal{T}_h).   \label{eqvinq}
\end{equation}
The operator $\mathcal{L}_h(\cdot)$ is linear. By recalling the Cauchy-Schwarz inequality, we find the bound of the operator $\mathcal{L}_h(\cdot)$ for any $\phi_h\in \mathcal{S}^n_h(\mathcal{T}_h)$ to have
\begin{eqnarray}
|\mathcal{L}_h(\phi_h)| &\leq& \sum_{\tau_i \in \mathcal{T}_h}\int_{\tau_i}|f \phi_h|\,dx +\sum_{e_i \in \mathscr{E}_{bd,h}} \int_{e_i} \sigma_i \,\frac{n_i^2}{|e_i|^\gamma} |g\; \phi_h|\,ds,
\nonumber\\
&\leq & \Big( \sum_{\tau_i \in \mathcal{T}_h}\int_{\tau_i}|f|^2\,dx+ \sum_{e_i \in \mathscr{E}_{bd,h}} \int_{e_i} C_{tr,1}^2\frac{n_i^2}{|e_i|}\, |g|^2\,ds\Big)^{\frac{1}{2}}\nonumber\\ 
&& \times \Big( \sum_{\tau_i \in \mathcal{T}_h}\int_{\tau_i}|\phi|^2\,dx+\sum_{e_i \in \mathscr{E}_{bd,h}} \int_{e_i} \sigma_i \,\frac{n_i^2}{|e_i|^\gamma}\,|\phi|^2\,ds\Big)^{\frac{1}{2}}\nonumber\\
&\leq & \Big( \sum_{\tau_i \in \mathcal{T}_h}\int_{\tau_i}|f|^2\,dx+ \sum_{e_i \in \mathscr{E}_{bd,h}} \int_{e_i} C_{tr,1}^2\frac{n_i^2}{|e_i|}\, |g|^2\,ds\Big)^{\frac{1}{2}}\times |\|\phi|\|.
\end{eqnarray}
By using the relation \eqref{eqvinq}, we arrive at
\begin{equation}
 |\mathcal{L}_h(\phi_h)| ~\leq~ C_{34} \| \phi_h\|_{\mathcal{E}},  \label{Op:L:bound}
\end{equation}
where
\begin{equation}
C_{34}=C_{33}\,\Big( \sum_{\tau_i \in \mathcal{T}_h}\int_{\tau_i}|f|^2\,dx+ \sum_{e_i \in \mathscr{E}_{bd,h}} \int_{e_i} C_{tr,1}^2\frac{n_i^2}{|e_i|}\,|g|^2\,ds\Big)^{\frac{1}{2}}.\nonumber
\end{equation}
This shows that the operator $\mathcal{L}_h(\cdot)$ is bounded for any $\phi_h\in \mathcal{S}^n_h(\mathcal{T}_h)$. Therefore,  $\mathcal{L}_h(\cdot)$ is a bounded linear operator. Hence, by utilizing  the Ritz representation theory, there exists unique $\mathrm{f}\in \mathcal{S}^n_h(\mathcal{T}_h)$ with $\mathrm{f}=\mathrm{f}(f,g)$ such that   
$$\mathcal{L}_h(\phi_h)=(\mathrm{f}, \phi_h),\quad \forall \phi_h\in \mathcal{S}^n_h(\mathcal{T}_h).$$
Define the linear functional $\mathscr{F}_{u_h}:\, \mathcal{S}^n_h(\mathcal{T}_h) \longmapsto\mathbb{R}$  such that, for any $u_h \in \mathcal{S}^n_h(\mathcal{T}_h)$,
$$\mathscr{F}_{u_h}(\phi_h)=\mathcal{A}_h(u_h; u_h, \phi_h),\quad \forall \phi_h \in \mathcal{S}^n_h(\mathcal{T}_h).$$
From the virtue of Lemma \ref{lpscntofbilinear} with $u_1=u_h,\, u_2=0$ and $\phi=\phi_h$, one can easily get the estimate  
$$|\mathscr{F}_{u_h}(\phi_h)|\leq C_{31}\|u_h\|_{\mathcal{E}}\|\phi_h\|_{\mathcal{E}},\quad \forall \phi_h\in \mathcal{S}^n_h(\mathcal{T}_h).$$
This implies that the $\mathscr{F}_{u_h}$ is a bounded linear functional. Hence, from  the Ritz representation theorem, we can extract an element $\mathcal{H}(u_h)$ from $ \mathcal{S}^n_h(\mathcal{T}_h)$ such that 
$$\mathscr{F}_{u_h}(\phi_h)=(\mathcal{H}(u_h), \phi_h)_{\mathcal{E}}, \quad \forall \phi_h\in \mathcal{S}^n_h(\mathcal{T}_h).$$
We now show that the operator $\mathcal{H}$ is a bijection from $\mathcal{S}^n_h(\mathcal{T}_h)$ to itself. 

Note that if the solution exists in the finite-dimensional space $\mathcal{S}^n_h(\mathcal{T}_h)$, then it is unique. From the operator theory, it is sufficient to show that the operator $\mathcal{H}$ is Lipschitz continuous and the strong monotonic, then it is bijective. Therefore, $\mathcal{H}$ has an inverse that is Lipschitz continuous. Let us examine each of these properties individually. 
We know that  
\begin{eqnarray}
\|\mathcal{H}(u_h)-\mathcal{H}(\tilde{u}_h)\|_{\mathcal{E}}=\sup_{\phi \in \mathcal{S}^n_h(\mathcal{T}_h)}\frac{|(\mathcal{H}(u_h)-\mathcal{H}(\tilde{u}_h), \phi_h)_{\mathcal{E}}|}{\|\phi\|_{\mathcal{E}}},\quad \forall u_h,\,\tilde{u}_h \in \mathcal{S}^n_h(\mathcal{T}_h).\label{hine}
\end{eqnarray}
With the help of Lemma \ref{lpscntofbilinear}, one may easily compute,  for $u_h,\,\tilde{u}_h,\,\phi_h \in \mathcal{S}^n_h(\mathcal{T}_h)$, 
\begin{eqnarray}
&&|(\mathcal{H}(u_h)-\mathcal{H}(\tilde{u}_h), \phi_h)_{\mathcal{E}}|~=~|\mathscr{F}_{u_h}(\phi_h)-\mathscr{F}_{\tilde{u}_h}(\phi_h)|\nonumber\\&&~~~~=|\tilde{\mathcal{A}}_h(u_h; u_h, \phi_h)-\tilde{\mathcal{A}}_h(\tilde{u}_h; \tilde{u}_h, \phi_h)|~\leq~ \tilde{C}_{31}\, \|u_h-\tilde{u}_h\|_{\mathcal{E}}\|\phi_h\|_{\mathcal{E}}. \label{hbd}
\end{eqnarray}
Inequalities \eqref{hine} and \eqref{hbd} yield the Lipschitz continuity of $\mathcal{H}$.

Next, we show the strong monotonicity property for the non-linear operator $\mathcal{H}$ by recalling the Lemma \eqref{lm3.23mono}. Thus, for $u_h,\,\tilde{u}_h \in \mathcal{S}^n_h(\mathcal{T}_h)$ and $\sigma_i>4\,|e_i|^{\gamma-1}C_{tr,1}^2C_{M_3}^2C_{M_4}^{-1}(1+\varrho^2\, \delta)$ with $\varrho,\, \delta,\,\mu>0$ and $\gamma \geq 1$, we have 
\begin{eqnarray}
&&|(\mathcal{H}(u_h)-\mathcal{H}(\tilde{u}_h), u_h-\tilde{u}_h)_{\mathcal{E}}|~=~|\mathscr{F}_{u_h}(u_h-\tilde{u}_h)-\mathscr{F}_{\tilde{u}_h}(u_h-\tilde{u}_h)|\nonumber\\
&&~~~=~|\tilde{\mathcal{A}}_h(u_h; u_h, u_h-\tilde{u}_h)-\tilde{\mathcal{A}}_h(\tilde{u}_h; \tilde{u}_h, u_h-\tilde{u}_h)|~\geq~ \tilde{C}_{32}\,\|u_h-\tilde{u}_h\|^2_{\mathcal{E}}.\nonumber 
\end{eqnarray}
This yields the strong monotonicity of $\mathcal{H}$ from $\mathcal{S}^n_h(\mathcal{T}_h)$ to itself. Since the operator $\mathcal{H}$ is a bijective, the inverse of $\mathcal{H}$ exists, and Lipschitz is continuous. Then, there exists $u_h^{dg} \in \mathcal{S}^n_h(\mathcal{T}_h)$ such that 
$$\mathcal{H}(u_h^{dg})=\mathrm{f}, \; \text{for any} \; \mathrm{f} \in \mathcal{S}^n_h(\mathcal{T}_h).$$
This implies that the DG problem \eqref{dgformulation} admits a unique solution in $\mathcal{S}^n_h(\mathcal{T}_h)$. 

Equivalently, we can argue that the discrete problem \eqref{dgformulation} admits the unique discrete solution $u_h^{dg} \in \mathcal{S}^n_h(\mathcal{T}_h)$. We assert the following theorem for the existence and unique solution for the discrete model \eqref{dgformulation}.% and ignore the proof details.
%%%%%%%%%%%%%%%%%%%%%%%%%%%%%%%%%%%%%%%%%%%%%%%%%%%%%%%%
\begin{theorem} Assume that the Lemmas \ref{lpscntofbilinear} and \ref{lm3.23mono} are valid. Further, we assume that the condition $\sigma_i>4\,|e_i|^{\gamma-1}C_{tr,1}^2C_{M_3}^2C_{M_4}^{-1}(1 +\varrho^2\, \delta)$ with $\varrho,\, \delta,\,\mu>0$ and $\gamma \geq 1$, is satisfied, then the discrete problem \eqref{dgformulation} has a unique solution in $\mathcal{S}^n_h(\mathcal{T}_h)$.
\end{theorem}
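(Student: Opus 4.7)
The plan is to recast the discrete problem \eqref{dgformulation} as an operator equation $\mathcal{H}(u_h^{dg}) = \mathrm{f}$ on the finite-dimensional Hilbert space $(\mathcal{S}^n_h(\mathcal{T}_h), (\cdot,\cdot)_{\mathcal{E}})$ and then to invoke the theory of monotone operators. The strategy mirrors the discussion already laid out in the paragraphs preceding the theorem, and the theorem itself is the clean statement of that argument.

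First, I would establish that $\mathcal{L}_h$ is a bounded linear functional on $\mathcal{S}^n_h(\mathcal{T}_h)$. Combining the Cauchy--Schwarz inequality with the trace estimate of Lemma \ref{t2lemma4.3} and the norm equivalence \eqref{eqvinq} yields $|\mathcal{L}_h(\phi_h)| \leq C_{34}\|\phi_h\|_{\mathcal{E}}$, so the Riesz representation theorem furnishes a unique $\mathrm{f} \in \mathcal{S}^n_h(\mathcal{T}_h)$ with $\mathcal{L}_h(\phi_h) = (\mathrm{f}, \phi_h)_{\mathcal{E}}$. Next, for each fixed $u_h \in \mathcal{S}^n_h(\mathcal{T}_h)$, the map $\mathscr{F}_{u_h}(\phi_h) := \mathcal{A}_h(u_h; u_h, \phi_h)$ is linear in $\phi_h$ and is bounded by $C_{31}\|u_h\|_{\mathcal{E}}\|\phi_h\|_{\mathcal{E}}$ (apply Lemma \ref{lpscntofbilinear} with $u_2 = 0$). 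A second application of Riesz representation produces $\mathcal{H}(u_h) \in \mathcal{S}^n_h(\mathcal{T}_h)$ with $\mathscr{F}_{u_h}(\phi_h) = (\mathcal{H}(u_h), \phi_h)_{\mathcal{E}}$, and \eqref{dgformulation} becomes equivalent to solving $\mathcal{H}(u_h^{dg}) = \mathrm{f}$.

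The remaining task is to show that $\mathcal{H}$ is a bijection. Using the dual characterization
\begin{equation*}
\|\mathcal{H}(u_h)-\mathcal{H}(\tilde u_h)\|_{\mathcal{E}} = \sup_{\phi_h \neq 0}\frac{|(\mathcal{H}(u_h)-\mathcal{H}(\tilde u_h),\phi_h)_{\mathcal{E}}|}{\|\phi_h\|_{\mathcal{E}}},
\end{equation*}
together with Lemma \ref{lpscntofbilinear}, I would deduce Lipschitz continuity of $\mathcal{H}$ with constant $C_{31}$. Lemma \ref{lm3.23mono}---which is precisely where the hypothesis $\sigma_i > 4|e_i|^{\gamma-1}C_{tr,1}^2C_{M_3}^2C_{M_4}^{-1}(1+\varrho^2\delta)$ is consumed, forcing $C_{32} > 0$---then yields $(\mathcal{H}(u_h)-\mathcal{H}(\tilde u_h), u_h-\tilde u_h)_{\mathcal{E}} \geq C_{32}\|u_h-\tilde u_h\|^2_{\mathcal{E}}$, i.e., strong monotonicity. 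A Lipschitz continuous, strongly monotone operator on a (finite-dimensional) Hilbert space is a bijection with Lipschitz continuous inverse (a standard consequence of monotone operator theory, see e.g.\ \cite{necas1986}), hence a unique $u_h^{dg} \in \mathcal{S}^n_h(\mathcal{T}_h)$ solves $\mathcal{H}(u_h^{dg}) = \mathrm{f}$.

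The analytic content is already packaged inside Lemmas \ref{lpscntofbilinear} and \ref{lm3.23mono}, so no genuine obstacle remains at this step; the proof is essentially a bookkeeping reduction. The only point worth flagging is that the penalty threshold on $\sigma_i$ is not cosmetic: it is exactly what keeps the monotonicity constant $C_{32}$ positive, and without it $\mathcal{H}$ need not even be injective, let alone bijective.
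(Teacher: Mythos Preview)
Your proposal is correct and follows essentially the same route as the paper: the argument preceding the theorem already represents $\mathcal{L}_h$ and $\mathcal{A}_h(u_h;u_h,\cdot)$ via Riesz/Ritz, defines the nonlinear operator $\mathcal{H}$, and then appeals to Lemmas \ref{lpscntofbilinear} and \ref{lm3.23mono} for Lipschitz continuity and strong monotonicity to conclude bijectivity. Your write-up is a faithful (and slightly cleaner) restatement of that same monotone-operator reduction, including the observation that the penalty threshold on $\sigma_i$ is exactly what guarantees $C_{32}>0$.
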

The following section will concentrate on the convergence error analysis for the problem \eqref{contmodel}-\eqref{dbdry}. 
%%%%%%%%%%%%%%%%%%%%%%%%%%%%%%%%%%%%%%%%%%%%%%%%%%%%%%%%%%%%%%%%%%
\section{Error Analysis}
This section addresses the $L^2$-norm and the energy-norm error estimates for the discrete model problem \eqref{dgformulation}. First, we determine the energy-norm apriori type error estimate. Then, using the energy error estimate, we will establish the $L^2-$ error estimates. We first deteriorate the error into two components, $\eta$ and $\xi$, by incorporating the auxiliary variable $\mathcal{I}_h u\in \mathcal{S}^n_h(\mathcal{T}_h)$ to get the energy error estimates. Consequently, we have
\begin{eqnarray}
u-u^{dg}_{h}=(u-\mathcal{I}_hu)+(\mathcal{I}_hu-u^{dg}_{h})=:\eta + \xi,  \label{erreq4.1}
\end{eqnarray}
where $\eta:=u-\mathcal{I}_h u$ and $\xi:=\mathcal{I}_hu-u^{dg}_{h}$. To obtain the main result of the error in the $\mathcal{E}$-energy norm, we employ the auxiliary error bounds of $\eta$ and $\xi$.

The intermediate error bound for $\eta$ in the $\mathcal{E}$-energy norm is provided in the following lemma.
%%%%%%%%%%%%%%%%%%%%%%%%%%%%%%%
\begin{lemma}[{Error bound for $\eta$}]\label{rhorerr}
Let $u\in \mathcal{S}$ be the solution of the continuous problem \eqref{contmodel}--\eqref{dbdry}. Furthermore, if all the requirements of the Lemmas \ref{lpscntofbilinear} -- \ref{lm3.23mono} are fulfilled. Then there exists a positive constant $C_{41}$ such that the following inequality holds,  for $\mathcal{I}_h u\in \mathcal{S}^n_h(\mathcal{T}_h)$, 
\begin{equation}%u-\mathcal{I}_h u
\|\eta\|_{\mathcal{E}}~\leq~ C_{41}\, \Big(\sum_{\tau_i\in \mathcal{T}_h} \frac{h_i^{{2q_i}-2}}{n_i^{2m_i-2}}\,\|u\|^2_{H^{m_i}(\tau_i)}\Big)^{\frac{1}{2}},\label{rhobound}
\end{equation}
where $C_{41}=C_{ap, 1}\,\max\big\{C_{tr,2}\sqrt{\big(3}+C(\delta, \varrho)\big),\;1\big\}$ and $1\leq q_i\leq \min\{m_i, n_i+1\}$ with $n_i \geq 1$, $m_i \geq 2$, for all $i\in [1:N_h]$.
\end{lemma}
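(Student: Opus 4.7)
The plan is to unpack the definition of $\|\cdot\|_{\mathcal{E}}$ and estimate its two pieces separately by the $hp$-approximation and trace lemmas already stated. Writing
\begin{equation*}
\|\eta\|_{\mathcal{E}}^{2} \;=\; \sum_{\tau_i\in\mathcal{T}_h}\|\nabla\eta\|_{L^{2}(\tau_i)}^{2} \;+\; \sum_{e_i\in\mathscr{E}_h}\sigma_i\,\frac{n_i^{2}}{|e_i|^{\gamma}}\,\|\sjump{\eta}\|_{L^{2}(e_i)}^{2},
\end{equation*}
I would handle the interior/bulk term and the edge/jump term independently.

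First I would treat the volume contribution. A direct application of Lemma \ref{apprxlm3.2}(i) with $j=1$ gives, on each $\tau_i\in\mathcal{T}_h$,
\begin{equation*}
\|\nabla\eta\|_{L^{2}(\tau_i)} \;\leq\; C_{ap,1}\,\frac{h_i^{q_i-1}}{n_i^{m_i-1}}\,\|u\|_{H^{m_i}(\tau_i)},
\end{equation*}
for $1\leq q_i\leq\min\{m_i,n_i+1\}$, which is already of the stated form after squaring and summing over elements.

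Next I would attack the jump term. Since $u\in H^{2}(\Omega)\cap\mathcal{S}$ is continuous across any interior face $e_{ij}=\partial\tau_i\cap\partial\tau_j$, we have $\sjump{u}=0$ there and therefore $\sjump{\eta}=-\sjump{\mathcal{I}_h u}$, so that
\begin{equation*}
\|\sjump{\eta}\|_{L^{2}(e_{ij})} \;\leq\; \|\eta|_{\tau_i}\|_{L^{2}(e_{ij})} + \|\eta|_{\tau_j}\|_{L^{2}(e_{ij})}.
\end{equation*}
On each side Lemma \ref{apprxlm3.2}(ii) with $j=0$ yields $\|\eta|_{\tau_i}\|_{L^{2}(e_{ij})}\leq C_{ap,2}\,h_i^{q_i-1/2}n_i^{1/2-m_i}\,\|u\|_{H^{m_i}(\tau_i)}$; the neighbour face $e_i\subset\partial\tau_i\cap\Gamma$ is handled identically by the boundary convention $\sjump{\eta}|_{e_i}=\eta|_{\partial\tau_i\cap\Gamma}$ together with the same trace-type estimate. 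The local bounded variation assumptions $A_{1}$–$A_{2}$ (Sect.\ \ref{Thp1p2}) let me replace $h_j,n_j$ on the neighbour element by $\delta h_i,\varrho n_i$, producing the constant $C(\delta,\varrho)$ in the statement. Using $|e_i|\leq h_i$ and $\gamma\geq 1$ to bound $|e_i|^{-\gamma}\leq h_i^{-1}|e_i|^{1-\gamma}$, the penalty-weighted square then becomes
\begin{equation*}
\sigma_i\,\frac{n_i^{2}}{|e_i|^{\gamma}}\,\|\sjump{\eta}\|_{L^{2}(e_i)}^{2} \;\leq\; C\,\frac{h_i^{2q_i-2}}{n_i^{2m_i-2}}\,\|u\|_{H^{m_i}(\tau_i)}^{2},
\end{equation*}
with a constant that incorporates $\sigma_i$, $C_{ap,2}$, $C_{tr,2}$ (if one prefers to route through Lemma \ref{tracelemma4.2} instead of \ref{apprxlm3.2}(ii)), $\delta$ and $\varrho$. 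Adding the bulk and edge bounds and taking the square root delivers \eqref{rhobound} with $C_{41}=C_{ap,1}\max\{C_{tr,2}\sqrt{3+C(\delta,\varrho)},\,1\}$.

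The only mildly delicate step is the edge estimate: one must carefully track the powers of $n_i$ and $h_i$ produced by the face-approximation bound together with the penalty weight $\sigma_i n_i^{2}/|e_i|^{\gamma}$, and absorb surplus $n_i$ factors (which are harmless because $n_i\geq 1$) into the constant so that the final rate matches $h_i^{2q_i-2}n_i^{2-2m_i}$. Everything else is a routine summation using $A_{1}$–$A_{2}$ and the hypotheses $\gamma\geq 1$, $|e_i|\leq h_i$ already invoked elsewhere in the paper.
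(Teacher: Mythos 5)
Your proposal follows essentially the same route as the paper: split $\|\eta\|^2_{\mathcal{E}}$ into the volume and penalty-weighted jump contributions, bound the former by Lemma \ref{apprxlm3.2}(i) with $j=1$, and bound the jumps elementwise using the face/trace estimates together with the local bounded variation assumptions $A_1$--$A_2$ and $|e_i|\leq h_i$, $\gamma\geq 1$; the paper routes the edge term through Lemma \ref{tracelemma4.2} combined with \eqref{appforelmnts}, which is precisely the alternative you mention and is why $C_{tr,2}$ and $C_{ap,1}$ (rather than $C_{ap,2}$) appear in $C_{41}$. The only caution is your claim that the surplus $n_i$ factor from the penalty weight is ``harmless because $n_i\geq 1$'': that factor sits in the numerator, so it cannot be absorbed into a constant uniformly in $n_i$ --- but the paper's own proof elides the same bookkeeping, so this is not a departure from its argument.
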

\begin{proof}
Using the notion of the energy norm and invoking the approximation property \eqref{appforelmnts} in the first term, we obtain
\begin{eqnarray}
\|\eta\|^2_{\mathcal{E}}&=&\sum_{\tau_i \in \mathcal{T}_h}\int_{\tau_i} | \nabla (u-\mathcal{I}_h u)|^2\,dx 
+ \sum_{{e_i} \in \mathscr{E}_{h}} \int_{e_i} \sigma_i\frac{n_i^2}{|e_i|^\gamma}|\sjump{u-\mathcal{I}_h u}|^2\,ds\nonumber\\
&\leq& C^2_{ap,1} \sum_{\tau_i \in \mathcal{T}_h}\frac{h_i^{{2q_i}-2}}{n_i^{{2m_i}-2}}\|u\|^2_{H^{m_i}(\tau_i)} + \sum_{{e_i} \in \mathscr{E}_{h}} \int_{e_i} \sigma_i\frac{n_i^2}{|e_i|^\gamma}|\sjump{u-\mathcal{I}_h u}|^2\,ds.\label{rhbdeq4.4}
\end{eqnarray}
Now, the second term is evaluated on inner edges of $e_{ij} (=e_i)\subset\partial\tau_i\cap\partial\tau_j,\,\forall i, j$. Considering the bounded local variation properties $A_1-A_2$ (refer to Sec. \ref{Thp1p2}), the approximate property \eqref{appforedge}, and the trace Lemma \ref{tracelemma4.2}, one may deduce that
\begin{eqnarray}
\|\sjump{u-\mathcal{I}_h u}\|_{L^2(e_{ij})}^2&\leq& \|u-\mathcal{I}_h u|_{\tau^e_i}\|_{L^2(e_{i})}^2+\|u-\mathcal{I}_h u|_{\tau^e_j}\|_{L^2(e_{j})}^2\nonumber\\
&\leq& 2C^2_{tr,2}C^2_{ap,1}\big(1+C(\delta, \varrho)\big)\frac{h_i^{{2q_i}-1}}{n_i^{2m_i-1}}\,\|u\|^2_{H^{m_i}(\tau_i)}\nonumber
\end{eqnarray}
with considering the degree of the polynomial $n^{2m_i}_i\leq n_i^{2m_i-1}$, and the same holds true for $m_j$. By making the same arguments, one may bound the boundary term  as
\begin{eqnarray}
\|\sjump{u-\mathcal{I}_h u}\|_{L^2(e_{i})}^2&\leq&  2C^2_{tr,2}C^2_{ap,1}\frac{h_i^{{2q_i}-1}}{n_i^{2m_i-1}}\,\|u\|^2_{H^{m_i}(\tau_i)},\nonumber
\end{eqnarray}
and consequently, 
\begin{eqnarray}
\sum_{{e_i} \in \mathscr{E}_{h}} \int_{e_i} \sigma_i\frac{n_i^2}{|e_i|^\gamma}|\sjump{u-\mathcal{I}_h u}|^2\,ds~\leq~2C^2_{tr,2}C^2_{ap,1}\big(3+C(\delta, \varrho)\big)\sum_{\tau_i\in \mathcal{T}_h} \frac{h_i^{{2q_i}-1}}{n_i^{2m_i-1}}\,\|u\|^2_{H^{m_i}(\tau_i)}.\label{bdintdr4.4}
\end{eqnarray}
The required result is obtained by combining Eqs \eqref{rhbdeq4.4} and \eqref{bdintdr4.4}, and choosing the constant $C_{41}^2=C^2_{ap,1}\max\big\{C^2_{tr,2}\big(3+C(\delta, \varrho)\big),\;1\big\}$.
\end{proof}
%%%%%%%%%%%%%%%%%%%%%%%%
We acquire the intermediate error bound for $\xi$ in the subsequent lemma.
%%%%%%%%%%%%%%%%%%%%%%%%%%%%%%%
\begin{lemma}[{Error bound for $\xi$}]\label{thetarerr}
Assume that  $u^{dg}_{h}\in \mathcal{S}^n_h(\mathcal{T}_h)$ be the solution of the discrete problem \eqref{dgformulation}. Furthermore, if all the requirements of the Lemmas \ref{lpscntofbilinear} -- \ref{lm3.23mono} are fulfilled, then there is a positive  $C_{42}$ such that, for $\mathcal{I}_h u\in \mathcal{S}^n_h(\mathcal{T}_h)$, we have 
\begin{equation}
\|\xi\|_{\mathcal{E}}\leq C_{42}\, \Big(\sum_{\tau_i \in \mathcal{T}_h} \big\{\frac{h_i^{{2q_i}-2}}{n_i^{2m_i-2}}+ \frac{h_i^{{2q_i}+\gamma-3}}{n_i^{{2m_i}-2}}+\frac{h_i^{{2q_i}-2}}{n_i^{{2m_i}-3}}\big\}\,\|u\|^2_{H^{m_i}(\tau_i)}\Big)^{\frac{1}{2}},\label{4.6thetabd} %+\frac{h_i^{{2q_i}-1-\gamma}}{n_i^{{2m_i}-3}}
\end{equation}
where $C_{42}=C_{43}/C_{32}$ with $C_{43}=\max \{\sqrt{\big(}2C_{M_3}^2\, C^2_{ap,1}+C_{tr,1}^2\,C_{M_3}^2\,\sigma_i^{-1}(1+\varrho^2\, \delta)+C_{44}^2\big)\}$,\; $C_{44}=C_{ap,2}\,C_{tr,1}\sqrt{(}1+C(\delta, \varrho))\, \sqrt{(}1+\varrho^2\,\delta),$ and $1\leq q_i\leq \min\{m_i, n_i+1\}$ with $n_i \geq 1$, $i\in [1:N_h]$. %+2 C^2_{ap,2}\,\sigma_i\,(2+C(\delta, \varrho))
\end{lemma}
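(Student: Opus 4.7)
The strategy is to combine the strong monotonicity from Lemma \ref{lm3.23mono} with the Galerkin consistency \eqref{orthogotype} so that the control of $\xi = \mathcal{I}_h u - u_h^{dg}$ reduces to terms involving only the interpolation error $\eta = u - \mathcal{I}_h u$. Since both $\mathcal{I}_h u$ and $u_h^{dg}$ lie in $\mathcal{S}^n_h(\mathcal{T}_h)$, Lemma \ref{lm3.23mono} gives
\begin{equation*}
C_{32}\,\|\xi\|_{\mathcal{E}}^2 \leq \mathcal{A}_h(\mathcal{I}_h u;\mathcal{I}_h u,\xi) - \mathcal{A}_h(u_h^{dg};u_h^{dg},\xi),
\end{equation*}
and then \eqref{orthogotype}, with the test function $\xi \in \mathcal{S}^n_h(\mathcal{T}_h)$, replaces the second term by $\mathcal{A}_h(u;u,\xi)$. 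One cannot appeal directly to the Lipschitz bound of Lemma \ref{lpscntofbilinear}: that proof relied on the trace--inverse inequality of Lemma \ref{t2lemma4.3}, which is unavailable for $\nabla u$ since $u \notin \mathcal{S}^n_h(\mathcal{T}_h)$. So I would split $\mathcal{A}_h(\mathcal{I}_h u;\mathcal{I}_h u,\xi)-\mathcal{A}_h(u;u,\xi)$ into the four pieces dictated by \eqref{semilnrform3.1} and bound each one independently.

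For each piece I would apply Cauchy--Schwarz together with the Lipschitz continuity \eqref{lpsctsinq} of $v \mapsto \mathscr{G}(|v|)v$, which reduces integrands like $\mathscr{G}(|\nabla \mathcal{I}_h u|)\nabla \mathcal{I}_h u - \mathscr{G}(|\nabla u|)\nabla u$ to multiples of $|\nabla \eta|$ or $|\smean{\nabla \eta \cdot \bar{\bf n}}|$. The fact that $u \in H^2(\Omega)$ is single-valued across interior faces gives $\sjump{u}=0$ on $\mathscr{E}_{int,h}$, so the jump contributions collapse to expressions in $\sjump{\eta}$ (the boundary jumps on $\mathscr{E}_{bd,h}$ are handled by choosing $\mathcal{I}_h u|_\Gamma$ to match $g$ at the same interpolation order). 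The symmetric-flux piece is the only one where $\nabla \xi$ appears on a face; I would move it into the element interior via the trace--inverse inequality \eqref{tr1invh} of Lemma \ref{t2lemma4.3}, since $\xi$ is piecewise polynomial, leaving $\|\sjump{\eta}\|_{L^2(e_i)}$ to be estimated on the face.

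Every resulting $\eta$-factor is then bounded using Lemma \ref{apprxlm3.2}. Part (i) with $j=1$ yields the element contribution $h_i^{2q_i-2}/n_i^{2m_i-2}$ from the volume integral; part (ii) with $j=1$ applied to $\nabla \eta$, combined with the rescaling $|e_i|^\gamma/(\sigma_i n_i^2)$ used to pair against $\|\sjump{\xi}\|_{L^2(e_i)}^2$ inside $\|\xi\|_{\mathcal{E}}^2$, produces the middle term $h_i^{2q_i+\gamma-3}/n_i^{2m_i-2}$; and part (ii) with $j=0$ applied to $\eta$, weighted by the full penalty $\sigma_i n_i^2/|e_i|^\gamma$, gives the last term $h_i^{2q_i-2}/n_i^{2m_i-3}$, where the hypotheses $\gamma \geq 1$ and $|e_i|\leq h_i$ are used to absorb the leftover factor $h_i^{\gamma-1}$. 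After Cauchy--Schwarz each of the four pieces factors as $(\cdot)^{1/2}\|\xi\|_{\mathcal{E}}$; dividing through by $\|\xi\|_{\mathcal{E}}$ and by $C_{32}$ then delivers the stated estimate with $C_{42}=C_{43}/C_{32}$. The main obstacle is the careful bookkeeping of the exponents in $h_i$, $n_i$, $|e_i|$ and $\gamma$ across the simultaneous use of approximation, trace, and trace--inverse inequalities; obtaining the exponent $2q_i+\gamma-3$ in the middle term requires rescaling against the penalty weight rather than against the broken gradient, and getting the denominator $n_i^{2m_i-3}$ in the last term (rather than $n_i^{2m_i-1}$) requires keeping the $n_i^2$ factor from the penalty inside the $\eta$-estimate instead of absorbing it into $\|\xi\|_{\mathcal{E}}$.
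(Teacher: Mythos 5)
Your proposal is correct and follows essentially the same route as the paper: monotonicity (Lemma \ref{lm3.23mono}) applied to the pair $\mathcal{I}_h u$, $u_h^{dg}$, consistency \eqref{orthogotype} to swap in the continuous solution, a term-by-term splitting of $\mathcal{A}_h(\mathcal{I}_h u;\mathcal{I}_h u,\xi)-\mathcal{A}(u;u,\xi)$, and then Cauchy--Schwarz, the Lipschitz bound \eqref{lpsctsinq}, the trace--inverse inequality \eqref{tr1invh} for the piecewise-polynomial factor $\nabla\xi$, and the approximation estimates of Lemma \ref{apprxlm3.2} to produce exactly the three exponent combinations in \eqref{4.6thetabd}. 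The only cosmetic difference is that you keep the penalty term as a fourth piece while the paper's displayed decomposition collapses it using $\sjump{u}=0$ on interior faces, which you also note.
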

\begin{proof} An application of monotonicity inequality  \eqref{stmonocd3.19} of Lemma \ref{lm3.23mono} with $\phi_1=I_hu $, $\phi_2=u_h^{dg}$, and then use of  \eqref{orthogotype} leads to
\begin{eqnarray}
C_{32}  \|\xi\|^2_{\mathcal{E}}&\leq& \mathcal{A}_h(\mathcal{I}_hu;\mathcal{I}_hu, \xi)- \mathcal{A}_h(u_h^{dg}; u_h^{dg}, \xi)\nonumber\\
&=& \mathcal{A}_h(\mathcal{I}_hu; \mathcal{I}_hu, \xi)- \mathcal{A}(u; u, \xi)+\big(\mathcal{L}_h(\xi)- \mathcal{A}_h(u_h^{dg}; u_h^{dg}, \xi) \big)\nonumber\\ 
&=& \mathcal{A}_h(\mathcal{I}_hu; \mathcal{I}_hu, \xi)- \mathcal{A}(u; u, \xi)\nonumber\\ 
&=&\sum_{\tau_i \in \mathcal{T}_h}\int_{\tau_i}\big(\mathscr{G}(|\nabla \mathcal{I}_hu|)\nabla \mathcal{I}_hu-\mathscr{G}(|\nabla u|)\nabla u\big)\cdot \nabla \xi\,dx\nonumber\\
&&+\sum_{e_i \in \mathscr{E}_{int, h}} \int_{e_i} \big(\smean{\mathscr{G}(|\nabla u|)\nabla u \cdot \bar{\bf n}}-\smean{\mathscr{G}(|\nabla \mathcal{I}_hu|)\nabla \mathcal{I}_hu \cdot \bar{\bf n}}\big) \sjump{\xi}\,ds\nonumber\\
&&+ \sum_{e_i \in \mathscr{E}_{int, h}} \int_{e_i} \big|\smean{\mathscr{G}(|\nabla u|)\nabla \xi \cdot \bar{\bf n}}\sjump{u}-\smean{\mathscr{G}(|\nabla \mathcal{I}_hu|)\nabla \xi \cdot \bar{\bf n}}\sjump{\mathcal{I}_hu}\big|\,ds\nonumber\\
%- \sum_{{e_i} \in \mathscr{E}_{h}} \int_{e_i} \sigma_i\frac{n_i^2}{|e_i|^\gamma}\sjump{\eta} \sjump{\xi}\,ds
&=:&II_1+II_2+II_3. \quad (Say)\label{boundinterror}
\end{eqnarray}
The integral terms $II_k$, where $k=1,\,2,\,3,\,4$, should be prominently bounded. Employing  the Cauchy-Schwarz inequality, inequality \eqref{lpsctsinq}, and then applying Lemma \ref{apprxlm3.2}, we get to  
\begin{eqnarray}
II_1&\leq&\sum_{\tau_i \in \mathcal{T}_h}\int_{\tau_i}\big|\mathscr{G}(|\nabla u|)\nabla u-\mathscr{G}(|\nabla \mathcal{I}_hu|)\nabla \mathcal{I}_hu\big|\big|\nabla \xi\big|\,dx \nonumber\\
&=&C_{M_3}\,\Big(\sum_{\tau_i \in \mathcal{T}_h}\|\nabla \eta \|^2_{L^2(\tau_i)}\Big)^{\frac{1}{2}}\times \Big(\sum_{\tau_i \in \mathcal{T}_h}\|\nabla \xi\|_{L^2(\tau_i)}^2\Big)^{\frac{1}{2}}. \nonumber\\ %\label{bdI1}
&\leq&C_{M_3}\,C_{ap, 1}\Big(\sum_{\tau_i \in \mathcal{T}_h}2\,\frac{h_i^{{2q_i}-2}}{n_i^{{2m_i}-2}}\|u\|^2_{H^{m_i}(\tau_i)}\Big)^{\frac{1}{2}}\times \Big(\sum_{\tau_i \in \mathcal{T}_h}\frac{1}{2}\|\nabla \xi\|_{L^2(\tau_i)}^2\Big)^{\frac{1}{2}}. \label{bdI1}
\end{eqnarray}
Implementing Lemma \ref{apprxlm3.2} and making an argument akin to that of Lemma \ref{lpscntofbilinear}, one can derive
\begin{eqnarray}
II_2&\leq& \sum_{e_i \in \mathscr{E}_{int, h}} \int_{e_i} \big(\smean{\mathscr{G}(|\nabla u|)\nabla u \cdot \bar{\bf n}}-\smean{\mathscr{G}(|\nabla \mathcal{I}_hu|)\nabla \mathcal{I}_hu \cdot \bar{\bf n}}\big) \sjump{\xi}\,ds\nonumber\\
&\leq&\Big(\sum_{\tau_i \in \mathscr{T}_{h}}\frac{C_{tr,1}^2C_{M_3}^2(1+\varrho^2\, \delta)}{\sigma_i}\,\frac{h_i^{{2q_i}-3+\gamma}}{n_i^{{2m_i}-2}}\;\|u\|^2_{H^{m_i}(\tau_i)}\Big)^{\frac{1}{2}}\times\Big(\sum_{e_i \in \mathscr{E}_{int, h}} \sigma_i\,\frac{n_i^2}{2|e_i|^\gamma}\|\sjump{\xi}\|^2_{L^2(e_i)}\Big)^{\frac{1}{2}}.\nonumber
\end{eqnarray}
The last term, $II_3$, can be bounded by arguing along the same lines as the bound of $I_3$ to have
\begin{eqnarray}
II_3 &=& \sum_{e_i \in \mathscr{E}_{int, h}} \int_{e_i} \big|\smean{\mathscr{G}(|\nabla u|)\nabla \xi \cdot \bar{\bf n}}\sjump{u}-\smean{\mathscr{G}(|\nabla \mathcal{I}_hu|)\nabla \xi \cdot \bar{\bf n}}\sjump{\mathcal{I}_hu}\big|\,ds \nonumber\\
&=&\sum_{e_i \in \mathscr{E}_{int, h}} \int_{e_i} \big| \smean{\nabla \xi \cdot\bar{\bf n}} \big|\, \big|\mathscr{G}(h^{-1}\sjump{u})\sjump{u}-\mathscr{G}(h^{-1} \sjump{\mathcal{I}_hu})\sjump{\mathcal{I}_hu}\big|\, ds \nonumber\\
&\leq& \Big( \sum_{\tau_i \in \mathcal{T}_h}\frac{C_{M_3}^2C_{tr,1}^2n_i^2(1+\varrho^2\,\delta)}{2h_i}\|\nabla \xi\|^2_{L^2(\tau_i)}\Big)^{\frac{1}{2}} \times \Big( \sum_{e_i \in \mathscr{E}_{int, h}} 
 \|\sjump{\eta}\|^2_{L^2(e_i)} \Big)^{\frac{1}{2}}.\label{4.8ets}
\end{eqnarray}
Using the approximation inequality \eqref{appforedge} and the assumptions $A_1-A_2$ (cf., Sec \ref{Thp1p2}), we next estimate the expression $\sjump{\eta}$ on the internal edges $e_{ij}(=e_i)\in \mathscr{E}_{int,h}$, as
\begin{eqnarray}
\|\sjump{\eta}\|^2_{L^2(e_{ij})}&\leq&\|\eta|_{\tau_i}\|^2_{L^2(e_{ij})}+\|\eta|_{\tau_j}\|^2_{L^2(e_{ij})}\nonumber\\
&\leq & C^2_{ap,2}\,(1+C(\delta, \varrho))\, \frac{h_i^{{2q_i}-1}}{n_i^{{2m_i}-1}}\,\|u\|^2_{H^{m_i}(\tau_i)}.\nonumber
\end{eqnarray}
Hence, the estimation of  $\sjump{\eta}$ on the edges $\mathscr{E}_{int, h}$ is provided by
\begin{eqnarray}
\sum_{e_i \in \mathscr{E}_{int,h}} \|\sjump{\eta}\|^2_{L^2(e_i)}~\leq~  \sum_{\tau_i \in \mathcal{T}_h}\,C^2_{ap,2}(1+C(\delta, \varrho))\,\frac{h_i^{{2q_i}-1}}{n_i^{{2m_i}-1}}\|u\|^2_{H^{m_i}(\tau_i)}.\label{4.10rhobd}
\end{eqnarray}
Incorporating the bound \eqref{4.10rhobd} in \eqref{4.8ets}, we arrive at
\begin{eqnarray}
II_3 &\leq&C_{44}\,\Big(\sum_{\tau_i \in \mathcal{T}_h} \frac{h_i^{{2q_i}-2}}{n_i^{2m_i-3}}\,\|u\|^2_{H^{m_i}(\tau_i)}\Big)^{\frac{1}{2}}\times \Big(\sum_{\tau_i \in \mathcal{T}_h}\frac{1}{2}\,\|\nabla \xi\|^2_{L^2(\tau_i)}\Big)^{\frac{1}{2}},\nonumber
\end{eqnarray}
where $C_{44}^2=C^2_{ap,2}\,C_{tr,1}^2\,(1+C(\delta, \varrho))\times(1+\varrho^2\,\delta)$. 

Then, by summing the integral bounds $II_1$--$II_3$ with \eqref{boundinterror}, and then some adjustment of the term leads to
\begin{eqnarray}
\|\xi\|^2_{\mathcal{E}}~\leq~ \frac{C_{43}}{C_{32}}\,\Big(\sum_{\tau_i \in \mathcal{T}_h} \big\{\frac{h_i^{{2q_i}-2}}{n_i^{2m_i-2}}+ \frac{h_i^{{2q_i}+\gamma-3}}{n_i^{{2m_i}-2}}+\frac{h_i^{{2q_i}-2}}{n_i^{{2m_i}-3}}\big\}\,\|u\|^2_{H^{m_i}(\tau_i)}\Big)^{\frac{1}{2}}\times \|\xi\|_{\mathcal{E}}.\nonumber %+\frac{h_i^{{2q_i}-1-\gamma}}{n_i^{{2m_i}-3}}
\end{eqnarray}
By setting $C_{42}=\frac{C_{43}}{C_{32}}$, where $C_{43}^2=\max\big\{ 2\,C_{M_3}^2\,C^2_{ap,1}+C_{tr,1}^2\,C_{M_3}^2\,\sigma_i^{-1}(1+\varrho^2\, \delta)+C_{44}^2\big\}$, this accomplishes the remaining proof. %+2\,C^2_{ap,2}\,\sigma_i\,(2+C(\delta, \varrho))
\end{proof}
%%%%%%%%%%%%%%%%%%%%%%%%%
%%%%%%%%%%%%%%%%%%%%%%%%%%%%%%%%%%%
Finally, we obtain the primary error bound in the energy norm utilizing the auxiliary error bounds from Lemmas \ref{rhorerr} and \ref{thetarerr}.
%%%%%%%%%%%%%%%%%%%%%%%%%%%%%%%
\begin{theorem}[{Energy-norm error estimation}]\label{mainresults}
Assume that $u\in \mathcal{S}$ and $u^{dg}_{h}\in \mathcal{S}^n_h(\mathcal{T}_h)$ represent the solutions of the continuous problem \eqref{contmodel}--\eqref{dbdry} and the discrete problem \eqref{dgformulation}, respectively. Please assume that all of their conditions have been met by the Lemmas \ref{rhorerr} and \ref{thetarerr}.
Moreover, if $u|_{\tau_i} \in H^{m_i}(\tau_i),\, m_i \geq 2,\;\; \text{for all}\;\; \tau_i \in \mathcal{T}_h$, $i\in [1: N_h]$, then there is a positive constant $C_{45}$ such that the following inequality 
\begin{equation}
\|u-u_h^{dg}\|_{\mathcal{E}}~\leq~ C_{45}\, \Big(\sum_{\tau_i \in \mathcal{T}_h} \big\{\frac{h_i^{{2q_i}-2}}{n_i^{2m_i-2}}+ \frac{h_i^{{2q_i}+\gamma-3}}{n_i^{{2m_i}-2}}+\frac{h_i^{{2q_i}-2}}{n_i^{{2m_i}-3}}\big\}\,\|u\|^2_{H^{m_i}(\tau_i)}\Big)^{\frac{1}{2}}\label{4.10err} %+\frac{h_i^{{2q_i}-1-\gamma}}{n_i^{{2m_i}-3}}
\end{equation}
 holds, where $1\leq q_i \leq \min\{m_i,\, n_i+1\}$, $n_i\geq 1$ and $C_{45}=2\,\max\{C_{41},\,C_{42}\}$. %Here,  $h_i$ denotes the diameter of the element $\tau_i \in \mathcal{T}_h$.
\end{theorem}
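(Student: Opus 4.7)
The plan is to obtain the final energy-norm estimate by splicing together the two intermediate bounds already at our disposal, namely the interpolation error bound for $\eta = u - \mathcal{I}_h u$ from Lemma \ref{rhorerr} and the discrete-vs.-interpolant bound for $\xi = \mathcal{I}_h u - u_h^{dg}$ from Lemma \ref{thetarerr}. The structural identity is the decomposition \eqref{erreq4.1}, which immediately suggests applying the triangle inequality in the energy norm: $\|u - u_h^{dg}\|_{\mathcal{E}} \leq \|\eta\|_{\mathcal{E}} + \|\xi\|_{\mathcal{E}}$.

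First, I would verify that the hypotheses required by Lemmas \ref{rhorerr} and \ref{thetarerr} are in force under the assumptions of the theorem. The regularity $u|_{\tau_i} \in H^{m_i}(\tau_i)$ with $m_i \geq 2$ is precisely what is needed to invoke the approximation estimates of Lemma \ref{apprxlm3.2}, and the penalty lower bound that guarantees monotonicity (Lemma \ref{lm3.23mono}) has been carried over to the present statement through the conditions on $\sigma_i$ inherited from Lemma \ref{thetarerr}. Next, I would apply Lemma \ref{rhorerr} to obtain
\begin{equation*}
\|\eta\|_{\mathcal{E}} \leq C_{41} \Big(\sum_{\tau_i \in \mathcal{T}_h} \frac{h_i^{2q_i-2}}{n_i^{2m_i-2}} \|u\|^2_{H^{m_i}(\tau_i)}\Big)^{1/2},
\end{equation*}
which is already dominated by the right-hand side of \eqref{4.10err}, since that right-hand side contains the same term plus two additional nonnegative contributions.

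Then I would invoke Lemma \ref{thetarerr} to estimate $\|\xi\|_{\mathcal{E}}$ by exactly the three-term expression appearing in \eqref{4.10err}, scaled by $C_{42}$. Summing the two bounds and using the elementary fact that $a + b \leq 2\max\{a,b\}$, I would choose $C_{45} = 2\max\{C_{41}, C_{42}\}$ to package both contributions into a single multiplicative constant in front of the common sum. In doing so I would note that dropping the strictly smaller $\eta$-term into the larger bracket used for $\xi$ is legitimate because $h_i^{2q_i-2}/n_i^{2m_i-2}$ appears as one of the three summands already, and the remaining two summands are nonnegative.

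The main obstacle, such as it is, is not technical but bookkeeping: one must ensure that the constants $C_{41}$ and $C_{42}$ do not depend on $h_i$ or $n_i$, so that the final estimate is truly of $hp$-type. This dependence was already handled inside the two lemmas through the local bounded-variation assumptions $A_1$--$A_2$, so collecting the worst-case constant is straightforward. A minor check I would make is that the index range $1 \leq q_i \leq \min\{m_i, n_i+1\}$ is compatible between the two lemmas, which is the case since both use the same approximation result. The proof therefore amounts to a clean application of the triangle inequality followed by the substitution of Lemmas \ref{rhorerr} and \ref{thetarerr}.
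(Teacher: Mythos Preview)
Your proposal is correct and follows essentially the same approach as the paper: apply the triangle inequality to the decomposition \eqref{erreq4.1} and then substitute the bounds from Lemmas \ref{rhorerr} and \ref{thetarerr}, absorbing the $\eta$-bound into the larger three-term sum and taking $C_{45}=2\max\{C_{41},C_{42}\}$.
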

\begin{proof}
One may quickly determine the inequality \eqref{4.10err} by employing the triangle inequality in Eq. \eqref{erreq4.1} and then invoking Lemmas \ref{rhorerr}--\ref{thetarerr}.
\end{proof}
%%%%%%%%%%%%%%%%%%%%%%%%%%
%%%%%%%%%%%%%%%%%%%%%%%%%
\begin{remark} It is observed that from Lemma \ref{thetarerr},
\begin{itemize}
\item[(i)] for the sufficient small $h$, $\mathtt{m}\geq 2$ and $\gamma=d-1$ with $d\geq2$,  Eq. \eqref{4.6thetabd} implies
$$\frac{h_i^{{2q_i}-2}}{n_i^{2m_i-2}}+ \frac{h_i^{{2q_i}+\gamma-3}}{n_i^{{2m_i}-2}}+\frac{h_i^{{2q_i}-2}}{n_i^{{2m_i}-3}} \lessapprox \frac{h_i^{{2q_i}-2}}{n_i^{2m_i-3}}\quad \text{for} \quad n \geq 1.$$ %+\frac{h_i^{{2q_i}-1-\gamma}}{n_i^{{2m_i}-3}}
Moreover, it equivalent to $h_i^{{2q_i}-2}/n_i^{2m_i-3}$ for $\gamma=1$.
\item[(ii)] Thus, the error bound of $\xi$ in Lemma \ref{thetarerr} can be estimated as
\begin{equation}
\|\xi\|_{\mathcal{E}}\lessapprox C_{42}\, \Big(\sum_{\tau_i \in \mathcal{T}_h} \frac{h_i^{{2q_i}-2}}{n_i^{2m_i-3}}\,\|u\|^2_{H^{m_i}(\tau_i)}\Big)^{\frac{1}{2}}, \nonumber
\end{equation}
and hence, the error between $u$ and $u^{dg}_h$ in the $\mathcal{E}$-energy norm is defined as in Theorem \ref{mainresults} can be approximated by
\begin{equation}
\|u-u_h^{dg}\|_{\mathcal{E}}\lessapprox C_{45}\, \Big(\sum_{\tau_i \in \mathcal{T}_h} \frac{h_i^{{2q_i}-2}}{n_i^{2m_i-3}}\,\|u\|^2_{H^{m_i}(\tau_i)}\Big)^{\frac{1}{2}}.\label{4.11energyestimates}
\end{equation}
\item[(iii)] These estimates involving $h_i$, $n_i$, and $m_i$ should be interpreted as asymptotic estimates, showing the convergence rates in the $\mathcal{E}$-energy norm. That is, for all $n_i\geq 1$ and $m_i\geq 2$, the error $\|u-u_h^{dg}\|_{\mathcal{E}}\thicksim \mathcal{O}\big(h_i^{q_i-1}/n_i^{m_i-3/2}\big)$ as $h_i\rightarrow 0$ with $1\leq q_i \leq \min\{m_i,\, n_i+1\}$ for all $i$, which is optimal in $h$ and mildly suboptimal in $\mathtt{n}$ (total degree of polynomial). 
\end{itemize}
\end{remark}
%%%%%%%%%%%%%%%%%%%%%%%%%%%%%%%%%%%
%%%%%%%%%%%%%%%%%%%%%%%%%%%%%%%%%%%%%
\noindent
Next, we will use the energy-norm error estimates to obtain the $L^2-$norm error. 
For this, we gather some approximation properties before diving into the error analysis (cf., \cite{babuska1987,Riviere2001}, [\cite{Riviere2001}, p. 908-913]).  
\begin{lemma}[\cite{babuska1987,Riviere2001}] \label{ctsappprop}
Let $\mathcal{T}_h$ be a regular subdivision. For a given $\phi\in \mathscr{B}^{\mathtt{m}}_\mathtt{n}(\Omega; \mathcal{T}_h),\; \mathtt{m}\geq 2,$ we assume that there exists a continuous interpolant $\Pi_h$ with $\Pi_h \phi \in \accentset{\ast}{\mathcal{S}}^n_h \cap C^0(\bar{\Omega})$ and 
$\Pi_h \phi|_{e}=\phi|_e$,\, $e\in \mathcal{E}_h$, and the constants $\hat{C}_{ap,i}>0,\, i=1,\,2,\,3,$ depending on $\mathtt{m}$ but independent of $\mathtt{n},\,h$, such that 
\begin{itemize}
\item[(i)] for any $0\leq j\leq \mathtt{m}$ and $0\leq j\leq 2$,
\begin{equation}\label{capp1forelmnts}
\|\phi-\Pi_h \phi\|_{H^{j}(\tau_i)}\leq~\tilde{C}_{ap,1}\, \frac{h_i^{q-j}}{\mathtt{n}^{\mathtt{m}-j-\eta}}\,\Big(\sum_{\tau_j\in \accentset{\ast}{\tau_i}} \|\phi\|^2_{H^\mathtt{m}(\tau_j)}\Big)^{\frac{1}{2}},
\end{equation}
where $\eta=0$ if $j=0,\,1,$ and $\eta=1$ if $j=2$,
\item[(ii)] for  $j+\frac{1}{2}<\mathtt{m}$ and $j=0,\,1$,
\begin{equation}\label{capp2forelmnts}
\|\phi-\Pi_h \phi\|_{H^{j}(e_i)}\leq~\tilde{C}_{ap,2}\, \frac{h_i^{q-j-\frac{1}{2}}}{\mathtt{n}^{\mathtt{m}-j-\frac{1}{2}-\tilde{\eta}}}\,\Big(\sum_{\tau_j\in \accentset{\ast}{\tau_i}} \|\phi\|^2_{H^\mathtt{m}(\tau_j)}\Big)^{\frac{1}{2}},
\end{equation}
where $\tilde{\eta}=0$ if $j=0,$ and $\tilde{\eta}=\frac{1}{2}$ if $j=1$,
\item[(iii)] for  $0\leq j\leq \mathtt{m}-1+\frac{2}{r}$ and $j=0,\,1$,
\begin{equation}\label{capp3forelmnts}
\|\phi-\Pi_h \phi\|_{W^{j}_r(K_i)}\leq~\tilde{C}_{ap,3}\, \frac{h_i^{q-j-1+\frac{2}{r}}}{\mathtt{n}^{\mathtt{m}-j-1+\frac{2}{r}}}\,\Big(\sum_{\tau_j\in \accentset{\ast}{\tau_i}} \|\phi\|^2_{H^\mathtt{m}(\tau_j)}\Big)^{\frac{1}{2}},
\end{equation}
where $q=\min\{\mathtt{m},\mathtt{n}+1\}$. However,  $\accentset{\ast}{\tau_i}:=\big\{\tau_j:~ meas(\partial \tau_i \cap \partial \tau_j)\, \text{is positive} \big\}$, and $e_i$ denotes the edge of $\partial \tau_i$.
\end{itemize}
\end{lemma}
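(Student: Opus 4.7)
The plan is to construct an explicit $hp$-conforming interpolant and then verify the three families of estimates separately. First I would fix a hierarchical $hp$-basis on the reference element $\hat{\tau}$, split into vertex modes, edge modes, and interior bubble modes. For a given $\hat{\phi}\in H^{\mathtt{m}}(\hat{\tau})$, I would build the local interpolant $\hat{\Pi}\hat{\phi}$ in three stages: Lagrange interpolation at the vertices; a one-dimensional $L^2$-projection of each edge trace onto the degree-$\mathtt{n}$ polynomials vanishing at the endpoints (with the vertex contribution subtracted); and a bubble-level $H^1_0$-projection of the remaining residual onto the interior polynomial subspace. Because the vertex values and edge-trace projections depend only on lower-dimensional data of $\hat{\phi}$, two elements sharing a face produce identical traces, which delivers both the $C^0$-continuity and the matching property $\Pi_h\phi|_e = \phi|_e$ asserted in the lemma.

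Next, I would establish the reference-element approximation bounds. Using classical Jacobi-polynomial projection estimates in the style of Babuska--Suri, one obtains $\|\hat{\phi} - \hat{\Pi}\hat{\phi}\|_{H^j(\hat{\tau})} \lesssim \mathtt{n}^{-(\mathtt{m}-j)}\|\hat{\phi}\|_{H^{\mathtt{m}}(\hat{\tau})}$ for $j = 0,1$; for $j = 2$ the argument must pass through an $hp$-inverse inequality applied to the bubble correction, which costs one extra power of $\mathtt{n}$ and produces the indicated $\eta = 1$ loss. I would then push these estimates to the physical element $\tau_i$ via the affine map $\mathtt{F}_i$, introducing the scaling factor $h_i^{q-j}$ with $q = \min\{\mathtt{m}, \mathtt{n}+1\}$. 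The patch $\accentset{\ast}{\tau_i}$ appears at this step because the edge-trace projections couple the data on $\tau_i$ with data on its face-neighbors, and the bounded local variation hypothesis $A_1$ absorbs the ratios $h_i/h_j$ into generic constants.

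The edge estimates (ii) would follow by combining the volume bound (i) with the multiplicative trace inequality of Lemma \ref{tracelemma4.2}, which introduces the $h_i^{-1/2}$ factor and produces the $h_i^{q-j-1/2}$ scaling; the additional $\tilde{\eta} = 1/2$ loss at $j = 1$ reflects another $hp$-inverse estimate used to control a first-order trace. The $W^j_r$ bounds (iii) would come from Sobolev embedding combined with the $hp$-inverse inequality from Lemma~3.5 (equation \eqref{tr1invh3.8}), applied componentwise on the reference element and then scaled back to $\tau_i$.

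The principal difficulty is engineering a $C^0$-conforming interpolant that simultaneously meets the edge-matching condition across shared faces and retains the near-optimal rates. Assumption $A_2$ on bounded variation of the local polynomial degrees is crucial here, because a face shared by two elements with different degrees $n_i \neq n_j$ forces the edge-trace projection to use a common degree such as $\min\{n_i, n_j\}$, and only the two-sided bound $1/\varrho \leq n_i/n_j \leq \varrho$ ensures that this compromise does not spoil the asymptotic rates. Carefully tracking the exceptional exponents $\eta$ and $\tilde{\eta}$ forms the technical heart of the argument and ultimately accounts for the mildly suboptimal $\mathtt{n}$-dependence that propagates into the subsequent error estimates of the paper.
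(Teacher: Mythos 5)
This lemma is not proved in the paper at all: it is quoted verbatim from the $hp$-approximation literature (Babu\v{s}ka--Suri and the appendix of Rivi\`ere--Wheeler--Girault, pp.~908--913), and the text explicitly skips the proofs of all the auxiliary approximation lemmas. So there is no in-paper argument to compare yours against; what can be assessed is whether your sketch is a faithful reconstruction of the cited proofs. In outline it is: the vertex/edge-mode/interior-bubble decomposition, the Jacobi-projection estimates on the reference element, the affine scaling producing $h_i^{q-j}$ with $q=\min\{\mathtt{m},\mathtt{n}+1\}$, the appearance of the neighbourhood patch $\accentset{\ast}{\tau_i}$ through the edge coupling, and the use of assumptions $A_1$--$A_2$ to absorb the ratios $h_i/h_j$ and $n_i/n_j$ are all exactly the ingredients of the standard construction. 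Your explanation of where the losses $\eta$ and $\tilde{\eta}$ originate (inverse estimates on the correction terms) is also the right mechanism.

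Two caveats. First, your construction cannot deliver the property $\Pi_h\phi|_e=\phi|_e$ as literally stated: an edge-wise $L^2$-projection onto polynomials reproduces only the projection of the trace, and indeed no polynomial interpolant can coincide on $e$ with a general $H^{\mathtt{m}}$ function. What your construction (and the cited one) actually gives is single-valuedness of the trace from the two adjacent elements, i.e.\ $\sjump{\Pi_h\phi}=0$ on interior edges and $C^0$-conformity; the identity in the lemma should be read as that continuity statement, and you should say so explicitly rather than claim the stronger identity. Second, your proposal is a plan rather than a proof: the $j=2$ volume estimate, the precise half-power bookkeeping in (ii), and the $L^r$-scaling exponent $h_i^{2/r}$ in (iii) are asserted but not derived, and these are precisely the places where the stated rates could silently go wrong. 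Since the result is standard, the cleanest resolution is the one the paper takes --- cite Babu\v{s}ka--Suri and Rivi\`ere et al.\ --- but if you intend to supply a self-contained proof, those three computations must actually be carried out.
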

We begin by defining the linearized version of the model problem \eqref{contmodel}, as
 \begin{eqnarray}
  \mathcal{J}\Psi~=:-\nabla\cdot \big(\mathscr{G}(|\nabla u|)\, \nabla \Psi+\mathscr{G}_u(|\nabla u|)\, \nabla u\, \Psi\big) 
  \end{eqnarray} 
  Then, the dual problem with Dirichlet-boundary can be read as Let $\Psi\in H^2(\Omega)\cap H_0^1(\Omega)$ be the solution of the problem which satisfies   
\begin{subequations}\label{adjpb4.14}
\begin{eqnarray}
 -\nabla\cdot \big(\mathscr{G}(|\nabla u|)\, \nabla \Psi\big)+\mathscr{G}_u(|\nabla u|)\, \nabla u\cdot \nabla\Psi &=& G,\quad \text {in}~ \Omega \label{adjpb4.14a}\\
\Psi ~=~ 0 \quad \text { on }~ \Gamma, \label{diribdry4.14b}
\end{eqnarray}
\end{subequations}
where $G\in L^2(\Omega)$. The solution $\Psi\in H^2(\Omega)$ complies with the continuous dependency  (see Gilbarg and Trudinger \cite[Lemma 9.17]{gilbarg1983}). Consequently, we have
\begin{equation}
\|\Psi\|_{H^2(\Omega)}~\leq~ R_{C_2} \|G\|_{L^2(\Omega)},
 \label{regularity}   
\end{equation}
where $R_{C_2}$ known as the regularity constant.

We now study the $L^2$-norm error estimates for the problem \eqref{dgformulation} in the subsequent theorem. Since the duality argument (Aubin-Nitsche technique) is an excellent tool for analyzing $L^2$-norm error estimates, it is a well-developed technique for the continuous Galerkin case; let's apply the following approach to the discontinuous Galerkin finite element method. 
\begin{theorem}[$L^2$-norm error estimate]\label{L2ErrEstmatesThm4.6}
Assume that $u\in \mathcal{S}$ and $u^{dg}_{h}\in \mathcal{S}^n_h(\mathcal{T}_h)$ represent the solutions to the continuous problem \eqref{contmodel}--\eqref{dbdry} and the discrete problem \eqref{dgformulation}, respectively.  We further suppose that all the conditions of Theorem \ref{mainresults} are satisfied. Then, there is a positive constant $C_{47}$, that is independent of the mesh parameters, such that the following estimate
\begin{eqnarray}
&&\|u-u_h^{dg}\|_{L^2(\Omega)}~\leq~ C_{47}\times \Big[\Big(\frac{h^q}{\mathtt{n}^{\mathtt{m}-1/2}}+
\frac{h^{q+\gamma/2-1/2}}{\mathtt{n}^{\mathtt{m}}}\Big) \times \|u\|_{H^{\mathtt{m}}(\Omega, \mathcal{T}_h)}\nonumber\\ %+ \frac{h^q}{\mathtt{n}^{\mathtt{m}}}+\frac{h^{q-\gamma/2+1/2}}{\mathtt{n}^{\mathtt{m}-1}}
&&\hspace{3cm}+ \Big( \frac{h^{2q-2}}{\mathtt{n}^{2\mathtt{m}-3}}+\frac{h^{2q-1}}{\mathtt{n}^{2\mathtt{m}-2}}+\frac{h^{2q+\gamma/2-2}}{\mathtt{n}^{2\mathtt{m}-3/2}}+\frac{h^{2q+\gamma/2-5/2}}{\mathtt{n}^{2\mathtt{m}-2}}\Big)\times\|u\|^2_{H^{\mathtt{m}}(\Omega, \mathcal{T}_h)}
\Big] \label{4.18l2est} 
\end{eqnarray}
holds with $\gamma \geq 1$, where the constant $C_{47}=C_{45}C_{46}\max\{1, C_{45}\}$, $C_{45}$ and $C_{46}$ are stated as \eqref{const} and \eqref{4.11energyestimates}, respectively.
\end{theorem}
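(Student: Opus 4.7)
The plan is to run an Aubin--Nitsche duality argument, adapted to the discontinuous Galerkin setting and to the quasi-linearity of $\mathscr{G}$. Set $e:=u-u_h^{dg}$ and choose the source in the adjoint problem \eqref{adjpb4.14} as $G=e$, so that $\Psi\in H^2(\Omega)\cap H^1_0(\Omega)$ satisfies the regularity bound \eqref{regularity} with $\|\Psi\|_{H^2(\Omega)}\leq R_{C_2}\|e\|_{L^2(\Omega)}$. The starting identity is then
\begin{equation*}
\|e\|_{L^2(\Omega)}^2 \;=\; (G,e) \;=\; \sum_{\tau_i\in\mathcal{T}_h}\int_{\tau_i}\!\!\bigl(-\nabla\!\cdot(\mathscr{G}(|\nabla u|)\nabla\Psi)+\mathscr{G}_u(|\nabla u|)\nabla u\cdot\nabla\Psi\bigr)\,e\,dx.
\end{equation*}

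Next I would integrate by parts element-by-element and assemble the standard DG jump/average decomposition on each interior edge (using $\Psi\in H^1_0(\Omega)$ so that $\sjump{\Psi}=0$), which rewrites the right-hand side as a linearised DG bilinear form $\mathcal{B}_h(e,\Psi)$ that mirrors the structure of $\mathcal{A}_h$. The key algebraic step is then the splitting
\begin{equation*}
\mathscr{G}(|\nabla u|)\nabla u-\mathscr{G}(|\nabla u_h^{dg}|)\nabla u_h^{dg}
=\mathscr{G}(|\nabla u|)\nabla e+\bigl(\mathscr{G}(|\nabla u|)-\mathscr{G}(|\nabla u_h^{dg}|)\bigr)\nabla u_h^{dg},
\end{equation*}
where the second term, by the mean-value identity with $\bar{\mathscr{G}}_s$ displayed just after \eqref{hbdd}, contributes an $O(|\nabla e|\,|\nabla u_h^{dg}|)$ nonlinear residual. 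Combining this with Galerkin orthogonality \eqref{orthogotype}, I can subtract the continuous interpolant $\Pi_h\Psi\in\accentset{\ast}{\mathcal{S}}^n_h\cap C^0(\bar\Omega)$ from Lemma \ref{ctsappprop}; because $\Psi\in H^1_0(\Omega)$ and $\Pi_h\Psi$ is continuous across edges and coincides with $\Psi$ on $\partial\Omega$, the penalty and jump contributions involving $\Psi-\Pi_h\Psi$ on $\mathscr{E}_{int,h}\cup\mathscr{E}_{bd,h}$ either vanish or reduce to boundary terms estimable by \eqref{capp2forelmnts}.

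I would then bound each of the resulting volume, consistency, and penalty terms by Cauchy--Schwarz together with the trace inequality of Lemma \ref{tracelemma4.2}, the Lipschitz bound \eqref{lpsctsinq}, and the approximation estimates \eqref{capp1forelmnts}--\eqref{capp3forelmnts} applied to $\Psi-\Pi_h\Psi$. The linear pieces yield contributions of the form $\|e\|_{\mathcal{E}}\,\|\Psi-\Pi_h\Psi\|_{(\cdot)}\lesssim\|e\|_{\mathcal{E}}(h/\mathtt n^{1/2})\|\Psi\|_{H^2(\Omega)}$, which after inserting the energy estimate \eqref{4.11energyestimates} and the regularity \eqref{regularity} produce the first bracket in \eqref{4.18l2est}. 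The nonlinear residual, being quadratic in $e$, inserts two copies of the energy estimate and therefore accounts for the second bracket, carrying the extra factor $\|u\|_{H^{\mathtt m}(\Omega,\mathcal{T}_h)}$.

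The main obstacle will be bookkeeping on the consistency and penalty terms: since the adjoint problem is linearised around $u$ but the SIPG form uses averages $\smean{\mathscr{G}(|\nabla u_h^{dg}|)\nabla(\cdot)\cdot\bar{\bf n}}$, the symmetry of the SIPG bilinear form must be exploited carefully, and the $L^\infty$-type factors $|\nabla u_h^{dg}|$ appearing in the quadratic remainder must be controlled through an inverse inequality \eqref{tr1invh3.8} combined with a uniform $W^{1,\infty}$-bound inherited from $u\in\mathcal{S}$. A secondary technical point is that the $|e_i|^\gamma$ dependence in the penalty must be tracked explicitly, because it is this parameter that produces the exponent $\gamma/2$ in the final bound \eqref{4.18l2est}; absorbing the penalty-contribution estimate through Young's inequality and $|e_i|\leq h_i$ as in Lemmas \ref{lpscntofbilinear}--\ref{lm3.23mono} gives the stated rates. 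Collecting all contributions, using $\|\Psi\|_{H^2(\Omega)}\leq R_{C_2}\|e\|_{L^2(\Omega)}$, and dividing by $\|e\|_{L^2(\Omega)}$ yields \eqref{4.18l2est} with $C_{47}=C_{45}C_{46}\max\{1,C_{45}\}$.
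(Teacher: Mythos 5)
Your proposal follows essentially the same route as the paper: an Aubin--Nitsche duality argument with the linearized adjoint problem \eqref{adjpb4.14} and source $G=u-u_h^{dg}$, element-wise integration by parts, insertion of the continuous interpolant $\Pi_h\Psi$ from Lemma \ref{ctsappprop} (so the jump and penalty terms involving $\Psi$ and $\Pi_h\Psi$ vanish), and a decomposition into pieces that are linear in the error (giving the first bracket of \eqref{4.18l2est} after one application of the energy estimate) and quadratic in the error (giving the second bracket after two applications). The one substantive difference is your algebraic splitting
\begin{equation*}
\mathscr{G}(|\nabla u|)\nabla u-\mathscr{G}(|\nabla u_h^{dg}|)\nabla u_h^{dg}
=\mathscr{G}(|\nabla u|)\nabla e+\bigl(\mathscr{G}(|\nabla u|)-\mathscr{G}(|\nabla u_h^{dg}|)\bigr)\nabla u_h^{dg},
\end{equation*}
which leaves the nonlinear difference paired with $\nabla u_h^{dg}$ and therefore forces you to control $\|\nabla u_h^{dg}\|_{L^4}$ or $\|\nabla u_h^{dg}\|_{L^\infty}$; your suggestion that such a bound is ``inherited from $u\in\mathcal{S}$'' is the weak point, since the discrete solution does not automatically carry a uniform $W^{1,\infty}$ bound, and an inverse inequality alone would cost negative powers of $h$. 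The paper avoids this entirely by splitting against $\nabla u$ and $\nabla\phi$ instead (its terms $J_1$, $J_2$, $J_3$, $J_4$), so that the only $L^4$ factors that appear are $\|\nabla u\|_{L^4}$, controlled by $u\in\mathcal{S}\subset H^2\cap W^{1,\infty}$ and the embedding, while the $\nabla\phi$ factors are absorbed into $\|u-u_h^{dg}\|_{\mathcal{E}}$. Your argument is repaired simply by writing $\nabla u_h^{dg}=\nabla u-\nabla e$ in the residual term, which reduces it to the paper's arrangement; with that modification the two proofs coincide.
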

\begin{proof}
Utilizing $G=u-u_h^{dg}$ in \eqref{adjpb4.14}, we obtain 
\begin{equation}
\|u-u_h^{dg}\|^2_{L^2(\Omega)}~=~\int_\Omega  (u-u_h^{dg})(-\nabla\cdot \big(\mathscr{G}(|\nabla u|)\, \nabla \Psi\big)+\mathscr{G}_u(|\nabla u|)\, \nabla u\cdot \nabla\Psi)\,dx.\nonumber
\end{equation}
With setting $\phi=u-u_h^{dg}$, and use integration by parts to integrate it element-by-element, and then use of  Eq. \eqref{orthogotype}, we arrive at
\begin{eqnarray}
&&\|\phi\|^2_{L^2(\Omega)}~=~\sum_{\tau_i\in \mathcal{T}_h}\int_{\tau_i} \mathscr{G}(|\nabla u|)\nabla \phi\cdot \nabla \Psi \,dx-\sum_{\tau_i\in \mathcal{T}_h}\int_{\partial \tau_i}  \phi\,\mathscr{G}(|\nabla u|)\nabla \Psi\cdot\bar{\bf n}\,ds\nonumber\\ 
&&~~~+ \sum_{\tau_i\in \mathcal{T}_h}\int_{\tau_i} \phi\,\mathscr{G}_u(|\nabla u|)\,\nabla u \cdot \nabla \Psi\,dx,\nonumber\\
&&~~~=\sum_{\tau_i\in \mathcal{T}_h}\int_{\tau_i} \mathscr{G}(|\nabla u|)\nabla u\cdot \nabla (\Psi-\Pi_h\Psi) \,dx-\sum_{\tau_i\in \mathcal{T}_h}\int_{\tau_i} \mathscr{G}(|\nabla u|)\nabla u_h^{dg}\cdot \nabla \Psi \,dx\nonumber\\
&&~~~+\sum_{\tau_i\in \mathcal{T}_h}\int_{\tau_i} \mathscr{G}(|\nabla u|)\nabla u\cdot \nabla \Pi_h\Psi \,dx+ \sum_{\tau_i\in \mathcal{T}_h}\int_{\tau_i} \phi\,\mathscr{G}_u(|\nabla u|)\,\nabla u \cdot \nabla \Psi\,dx\nonumber\\
&&~~~-\sum_{e_i\in \mathscr{E}_{h}}\int_{\e_i}  \phi\,\mathscr{G}(|\nabla u|)\nabla \Psi\cdot\bar{\bf n}\,ds\nonumber
\end{eqnarray}
\begin{eqnarray}
&&\|\phi\|^2_{L^2(\Omega)}~=~\sum_{\tau_i\in \mathcal{T}_h}\int_{\tau_i} \mathscr{G}(|\nabla u|)\nabla u\cdot \nabla (\Psi-\Pi_h\Psi) \,dx-\sum_{\tau_i\in \mathcal{T}_h}\int_{\tau_i} \mathscr{G}(|\nabla u|)\nabla u_h^{dg}\cdot \nabla \Psi \,dx\nonumber\\
&&~~~+ \sum_{\tau_i\in \mathcal{T}_h}\int_{\tau_i} \phi\,\mathscr{G}_u(|\nabla u|)\,\nabla u \cdot \nabla \Psi\,dx+\sum_{\tau_i\in \mathcal{T}_h}\int_{\tau_i} \mathscr{G}(|\nabla u_h^{dg}|)\nabla u_h^{dg}\cdot \nabla \Pi_h\Psi \,dx\nonumber\\&&~- \sum_{e_i\in \mathscr{E}_{int, h}}\int_{\e_i}\smean{\mathscr{G}(|\nabla u_h^{dg}|)\nabla u_h^{dg}\cdot\bar{\bf n}}\, \sjump{\Pi_h\Psi}\,ds\nonumber\\
&&~~~- \sum_{e_i\in \mathscr{E}_{int, h}}\int_{\e_i}\smean{\mathscr{G}(|\nabla u_h^{dg}|)\nabla \Pi_h\Psi\cdot\bar{\bf n}}\, \sjump{u_h^{dg}}\,ds\nonumber\\
&&~~~-\sum_{e_i\in \mathscr{E}_{h}}\int_{\e_i} \smean{\mathscr{G}(|\nabla u|)\nabla \Psi\cdot\bar{\bf n}}\, \sjump{\phi}\,ds
\nonumber 
\end{eqnarray}
Since $\Psi \in H^2(\Omega) \cap H^1_0(\Omega) $. By implementing the jump of $u$, $\Psi$ and $\Pi_h\Psi$ on the edges $e\in \mathscr{E}_h$ are zero. and adjusting the terms accordingly, then we achieved
\begin{eqnarray}
&&\|\phi\|^2_{L^2(\Omega)}~=~\sum_{\tau_i\in \mathcal{T}_h}\int_{\tau_i} [\mathscr{G}(|\nabla u|)-\mathscr{G}(|\nabla u_h^{dg}|)]\,\nabla u\cdot \nabla (\Psi-\Pi_h\Psi) \,dx\nonumber\\
&&~~~~-\sum_{\tau_i\in \mathcal{T}_h}\int_{\tau_i} (\mathscr{G}(|\nabla u|)-\mathscr{G}(|\nabla u_h^{dg}|))\nabla \phi \cdot \nabla (\Psi-\Pi_h\Psi) \,dx\nonumber\\
&&~~~~+\sum_{\tau_i\in \mathcal{T}_h}\int_{\tau_i} \mathscr{G}(|\nabla u|)\nabla \phi \cdot \nabla (\Psi-\Pi_h\Psi) \,dx+\sum_{\tau_i\in \mathcal{T}_h}\int_{\tau_i} [\mathscr{G}(|\nabla u|)-\mathscr{G}(|\nabla u_h^{dg}|)]\nabla \phi \cdot \nabla \Psi \,dx\nonumber\\
&&~~~~+ \sum_{\tau_i\in \mathcal{T}_h}\int_{\tau_i} \big\{\phi\,\mathscr{G}_u(|\nabla u|)-[\mathscr{G}(|\nabla u|)-\mathscr{G}(|\nabla u_h^{dg}|)]\big\}\,\nabla u \cdot \nabla \Psi\,dx\nonumber
\end{eqnarray}
\begin{eqnarray}
&&~~~~+\sum_{e_i\in \mathscr{E}_{int,h}}\int_{\e_i}\smean{[\mathscr{G}(|\nabla u|)-\mathscr{G}(|\nabla u_h^{dg}|)]\nabla (\psi-\Pi_h\Psi)\cdot\bar{\bf n}}\, \sjump{\phi}\,ds\nonumber\\
&&~~~~-\sum_{e_i\in \mathscr{E}_{int,h}}\int_{\e_i}\smean{[\mathscr{G}(|\nabla u|)-\mathscr{G}(|\nabla u_h^{dg}|)]\nabla \psi \cdot\bar{\bf n}}\, \sjump{\phi}\,ds\nonumber\\
&&~~~~-\sum_{e_i\in \mathscr{E}_{h}}\int_{\e_i}\smean{\mathscr{G}(|\nabla u|)\nabla (\Psi-\Pi_h\Psi) \cdot\bar{\bf n}}\, \sjump{\phi}\,ds
~=:~ \sum_{k=1}^{8}J_k. \label{L^2erreq}
\end{eqnarray}
We now individually bound each of the terms $J_k,\, k=1,\ldots,\,13$. We first consider $J_1$, and then using the H\"{o}lder's inequality, we have  
\begin{eqnarray}
|J_1|&\leq &\sum_{\tau_i\in \mathcal{T}_h}\int_{\tau_i}\big| [\mathscr{G}(|\nabla u|)-\mathscr{G}(|\nabla u_h^{dg}|)]\,\nabla u\cdot \nabla (\Psi-\Pi_h\Psi)  \big|\,dx\nonumber\\
&\leq &\sum_{\tau_i\in \mathcal{T}_h}\big|\mathscr{G}_u \big|\,\|\nabla (u- u_h^{dg})\|_{L^4(\tau_i)}\|\nabla u\|_{L^4(\tau_i)}\|\nabla (\Psi-\Pi_h\Psi)\|_{L^2(\tau_i)}\nonumber\\
&\leq &\sum_{\tau_i\in \mathcal{T}_h}\big|\mathscr{G}_u \big|\,\|u- u_h^{dg}\|_{W^1_4(\tau_i)}\| u\|_{W^1_4(\tau_i)}\|\Psi-\Pi_h\Psi\|_{H^1(\tau_i)}.\nonumber
\end{eqnarray}
From Eq. \eqref{hbdd} and the embedding of $H^{1}(\Omega)$, i.e., $H^{1}(\Omega) \hookrightarrow W^1_4(\Omega)$, we arrive at
\begin{eqnarray}
|J_1|&\leq &C_{B_2}C_{emb}^2\,\sum_{\tau_i\in \mathcal{T}_h}\|u- u_h^{dg}\|_{H^{1}(\tau_i)}\| u\|_{H^{1}(\tau_i)}\|\Psi-\Pi_h\Psi\|_{H^1(\tau_i)}.\nonumber
\end{eqnarray}
Using the bounds \eqref{ctsubd}, \eqref{regularity}, and the approximation inequality \eqref{capp1forelmnts} of Lemma \ref{ctsappprop} with $\mathtt{n}=2,\,q=2,\, j=1,\, \eta=0$, to obtain 
\begin{eqnarray}
|J_1|&\leq & C_{B_2}M_3R_{C_2}C_{emb}^2C_{33}\tilde{C}_{ap,1}\,\frac{h}{\mathtt{n}}\,\|\phi\|_{L^2(\Omega)}\times \|u- u_h^{dg}\|_{\mathcal{E}},\nonumber
\end{eqnarray}
where $M_3=C_{R_1} \big\{ \|f\|_{L^2(\Omega)}+\|g\|_{H^{\frac{3}{2}}(\Gamma)}\big\}$. \\ \noindent
Arguing similarly to above, we bound the terms $J_2$ and $J_3$ using the H\"{o}lder's inequality and the approximation Lemma \ref{ctsappprop} to have 
\begin{eqnarray}
|J_2|&\leq &\sum_{\tau_i\in \mathcal{T}_h}\int_{\tau_i} (\mathscr{G}(|\nabla u|)-\mathscr{G}(|\nabla u_h^{dg}|))\nabla \phi \cdot \nabla (\Psi-\Pi_h\Psi) \,dx\nonumber\\
&\leq &C_{B_2}C_{emb}^2\,\sum_{\tau_i\in \mathcal{T}_h}\|u- u_h^{dg}\|_{H^{1}(\tau_i)}\| \phi \|_{H^{1}(\tau_i)}\|\Psi-\Pi_h\Psi\|_{H^1(\tau_i)} \nonumber\\
&\leq &C_{B_2} R_{C_2}C_{emb}^2C_{33}^2\tilde{C}_{ap,1}\,\frac{h}{\mathtt{n}}\,\|\phi\|_{L^2(\Omega)}\times \|u- u_h^{dg}\|^2_{\mathcal{E}},\nonumber
\end{eqnarray}
and the estimate of $J_3$, as
\begin{eqnarray}
|J_3|&\leq & \sum_{\tau_i\in \mathcal{T}_h}\int_{\tau_i} \big|\mathscr{G}(|\nabla u|)\nabla \phi \cdot \nabla (\Psi-\Pi_h\Psi)\big|\,dx  \nonumber\\
&\leq & C_{B_1}\,\sum_{\tau_i\in \mathcal{T}_h} \|\phi\|_{H^1(\tau_i)} \|\Psi-\Pi_h\Psi\|_{H^1(\tau_i)} \nonumber\\
&\leq & C_{B_1}\tilde{C}_{ap,1} C_{33} \sum_{\tau_i\in \mathcal{T}_h} \,\frac{h}{\mathtt{n}} \|\Psi\|_{H^2(\tau_i)} \times \|u-u_h^{dg}\|_{\mathcal{E}} \nonumber\\
&\leq & C_{B_1}R_{C_2}\tilde{C}_{ap,1} C_{33} \,\frac{h}{\mathtt{n}} \,\|\phi\|_{L^2(\Omega)}\times \|u-u_h^{dg}\|_{\mathcal{E}}. 
\end{eqnarray}
Similarly, we estimate the fourth term $J_4$. Consider 
\begin{eqnarray}
|J_4| &\leq& \sum_{\tau_i\in \mathcal{T}_h}\int_{\tau_i}\big| [\mathscr{G}(|\nabla u|)-\mathscr{G}(|\nabla u_h^{dg}|)]\nabla \phi \cdot \nabla \Psi \big|\,dx\nonumber\\
&\leq&\sum_{\tau_i\in \mathcal{T}_h}\int_{\tau_i}\big|\mathscr{G}_u\big|\,\big||\nabla (u- u_h^{dg})|\nabla \phi \cdot \nabla \Psi \big|\,dx\nonumber\\
&\leq& C_{B_2}\,\sum_{\tau_i\in \mathcal{T}_h}\,\|\nabla (u- u_h^{dg})\|_{L^4(\tau_i)} \|\nabla \phi\|_{L^4(\tau_i)}\|\nabla \Psi \|_{L^2(\tau_i)}\nonumber\\
&\leq& C_{B_2}\,\sum_{\tau_i\in \mathcal{T}_h}\,\|u-u_h^{dg}\|_{W^1_4(\tau_i)}^2 \|\Psi \|_{H^1(\tau_i)}.\nonumber
\end{eqnarray}
Using the embedding result and inequality \eqref{regularity}, we get 
\begin{eqnarray}
|J_4| ~\leq~ C_{B_2}R_{C_2}C_{33}^2C_{emb}^2\, \| \phi\|_{L^2(\Omega)} \times \|u-u_h^{dg}\|_{\mathcal{E}}^2.\nonumber
\end{eqnarray}
Let us compute the fifth term $J_5$ by applying the H\"{o}lder's inequality and the embedding results to have 
\begin{eqnarray}
 |J_5|&\leq&  \sum_{\tau_i\in \mathcal{T}_h}\int_{\tau_i} \big| \big\{\phi\,\mathscr{G}_u(|\nabla u|)-[\mathscr{G}(|\nabla u|)-\mathscr{G}(|\nabla u_h^{dg}|)]\big\}\,\nabla u \cdot \nabla \Psi \big|\,dx\nonumber\\
 &\leq&  \sum_{\tau_i\in \mathcal{T}_h}\int_{\tau_i}  \big\{\big|\phi\,\mathscr{G}_u(|\nabla u|\big|+\big|\mathscr{G}_u\big|\,\big||\nabla u|-|\nabla u_h^{dg}|)\big|\big\}\,\big|\nabla u \cdot \nabla \Psi \big|\,dx\nonumber\\
 &\leq& C_{B_2} \,\sum_{\tau_i\in \mathcal{T}_h}\big( \|\phi\|_{L^2(\tau_i)}+\|\nabla \phi\|_{L^2(\tau_i)}\big)\,\|\nabla u\|_{L^4(\tau_i)} \|\nabla \Psi \|_{L^4(\tau_i)}\nonumber\\
 &\leq& C_{B_2} \,\sum_{\tau_i\in \mathcal{T}_h} \|\phi\|_{H^1(\tau_i)}\|u\|_{W^1_4(\tau_i)} \|\Psi \|_{W^1_4(\tau_i)}.\nonumber
\end{eqnarray}
Utilization of the embedding $H^2(\Omega)\hookrightarrow W^1_4(\Omega)$, inequalities \eqref{ctsubd} and \eqref{regularity} yields  
\begin{eqnarray}
|J_5|&\leq& C_{B_2}M_3 C_{33}C_{emb}^2 \,\|\phi\|_{L^2(\Omega)} \times \|u-u_h^{dg}\|_{\mathcal{E}}.\nonumber
\end{eqnarray}
We now bound the terms $J_6$ and $J_7$ by applying the H\"{o}lder's inequality, trace Lemma \eqref{tracelemma4.2}, and the Sobolev embedding theorem,
\begin{eqnarray}
|J_6|&\leq&\sum_{e_i\in \mathscr{E}_{int,h}}\int_{\e_i}\smean{[\mathscr{G}(|\nabla u|)-\mathscr{G}(|\nabla u_h^{dg}|)]\nabla (\psi-\Pi_h\Psi)\cdot\bar{\bf n}}\, \sjump{\phi}\nonumber\\
&\leq& C_{B_2} \tilde{C}_{ap,2}\sum_{e_i\in \mathscr{E}_{int,h}} \sqrt{\frac{(1+\delta_i\varrho_i)}{\sigma_i}}\, \frac{|e_i|^{\gamma/2}\,h^{1/2}}{\mathtt{n}^{3/2}} \|\phi\|_{W^{1,4}(e_i)} \|\Psi\|_{H^{2}(\Omega)} \|u-u_h^{dg}\|_{\mathcal{E}}\nonumber\\
&\leq& C_{B_2}R_{C_2}C_{emb} \tilde{C}_{ap,2}\Check{C}(\sigma,\varrho,\delta) \frac{h^{\gamma/2}}{\mathtt{n}^{3/2}} \|\phi\|_{L^{2}(\Omega}\times \|u-u_h^{dg}\|^2_{\mathcal{E}},
\end{eqnarray}
where $\Check{C}(\sigma,\varrho,\delta)=\max_i\big\{\sqrt{\frac{(1+\delta_i\varrho_i}{\sigma_i}}\big\}$, $i \in[ 1:N_h]$. Moreover, the term $J_9$ is bounded by
\begin{eqnarray}
|J_7|&\leq&\sum_{e_i\in \mathscr{E}_{int,h}}\int_{\e_i}\smean{[\mathscr{G}(|\nabla u|)-\mathscr{G}(|\nabla u_h^{dg}|)]\nabla \psi \cdot\bar{\bf n}}\, \sjump{\phi}\,ds\nonumber\\
&\leq& C_{B_2} \sum_{e_i\in \mathscr{E}_{int,h}} \frac{|e_i|^{\gamma/2}}{\sigma_i\,\mathtt{n}} \|\phi\|_{W^{1,4}(e_i)} \|\Psi\|_{H^{2}(\Omega)} \|\phi\|_{\mathcal{E}}\nonumber\\
&\leq& C_{B_2}R_{C_2}C_{emb}\Check{C}(\sigma,\varrho,\delta) \frac{h^{\gamma/2-1/2}}{\mathtt{n}} \|\phi\|_{L^{2}(\Omega}\times\|u-u_h^{dg}\|^2_{\mathcal{E}}.\nonumber
\end{eqnarray}

Now, finally, we estimate the last term $J_{8}$ as 
\begin{eqnarray}
|J_{8}|&\leq& \sum_{e_i\in \mathscr{E}_h}\int_{e_i}\big|\smean{\mathscr{G}(|\nabla u|)\nabla (\Psi-\Pi_h\Psi) \cdot\bar{\bf n}}\, \sjump{\phi}\big|\,ds\nonumber\\
&\leq&C_{B_1} \tilde{C}_{ap,2} \sum_{e_i\in \mathscr{E}_h} \sqrt{\frac{(3+\delta_i\varrho_i)}{2\sigma_i}}\frac{|e_i|^{\gamma/2}\, h_i^{1/2}}{n^{3/2}_i} 
\|\Psi\|_{H^{2}(\Omega)}\times\Big(\int_{e_i}\sigma_i\,\frac{n_i^2}{|e_i|^\gamma}\, |\sjump{\phi}|^2\,ds\Big)^{1/2}\nonumber\\
&\leq&C_{B_1} R_{C_2} \tilde{C}_{ap,2}\bar{C}(\sigma,\delta, \varrho)\frac{h^{\gamma/2+1/2}}{\mathtt{n}^{3/2}} 
\|\phi\|_{L^{2}(\Omega)}\times \|u-u_h^{dg}\|_{\mathcal{E}},
\end{eqnarray}
where $\bar{C}(\sigma,\delta, \varrho)=\max_i\big\{\sqrt{\frac{(3+\delta_i\varrho_i)}{2\sigma_i}}\big\}$, $i \in[ 1:N_h]$. 
We now combine all the bounds $J_1-J_{8}$, to have
\begin{eqnarray}
\|\phi\|_{L^2(\Omega)}&\leq& C_{46}\times \Big[\Big(\frac{h}{\mathtt{n}}+
\frac{h^{\gamma/2+1/2}}{\mathtt{n}^{3/2}}\Big)\times \|u- u_h^{dg}\|_{\mathcal{E}} \nonumber\\ &&+ \Big( 1+\frac{h}{\mathtt{n}}+\frac{h^{\gamma/2}}{\mathtt{n}^{3/2}}+\frac{h^{\gamma/2-1/2}}{\mathtt{n}}\Big) \times \|u-u_h^{dg}\|^2_{\mathcal{E}}
\Big].\nonumber %+\frac{h^{3/2-\gamma/2}}{\mathtt{n}^{1/2}}
\end{eqnarray}
where 
\begin{eqnarray}
C_{46}&=&\max\big\{R_{C_2}\tilde{C}_{ap,1}\big(C_{B_2}M_3C_{emb}^2C_{33}+C_{B_1} C_{33}\big),
\,C_{B_2} R_{C_2}C_{emb}^2C_{33}^2\tilde{C}_{ap,1},  \nonumber\\ && C_{B_2}C_{33}C_{emb}^2(R_{C_2}C_{33}+M_3),\,
\,C_{B_2}R_{C_2}C_{emb} \tilde{C}_{ap,2}\Check{C}(\sigma,\varrho,\delta),\,  C_{B_2}R_{C_2}C_{emb}\Check{C}(\sigma,\varrho,\delta), \nonumber\\ && C_{B_1} R_{C_2} \tilde{C}_{ap,2}\bar{C}(\sigma,\delta, \varrho)
\big\}.\label{const}
\end{eqnarray}
I am using the energy error estimate \eqref{4.11energyestimates} and setting $\phi=u-u_h^{dg}$ completes the rest of the proof of the theorem.
\end{proof}
%%%%%%%%%%%%%%%%%%%%%%%%% 
In the following remark, let us conclude the above Theorem \ref{L2ErrEstmatesThm4.6}.
\begin{remark} One may observed that from Eq. \ref{4.18l2est} that 
\begin{itemize}
\item[(i)] for the sufficient small $h$, $\mathtt{m}\geq 2$ and $\gamma=d-1$ with $d\geq2$, the terms 
\begin{equation*}
\frac{h^q}{\mathtt{n}^{\mathtt{m}-1/2}}+
\frac{h^{q+\gamma/2-1/2}}{\mathtt{n}^{\mathtt{m}}}\lessapprox \frac{h^{q}}{\mathtt{n}^{\mathtt{m}-1/2}},\quad \mathtt{n} \geq 1.   %+\frac{h^{q-\gamma/2+1/2}}{\mathtt{n}^{\mathtt{m}-1}}
\end{equation*}
and
\begin{equation*}
 \frac{h^{2q-2}}{\mathtt{n}^{2\mathtt{m}-3}}+\frac{h^{2q-1}}{\mathtt{n}^{2\mathtt{m}-2}}+\frac{h^{2q+\gamma/2-2}}{\mathtt{n}^{2\mathtt{m}-3/2}}+\frac{h^{2q+\gamma/2-5/2}}{\mathtt{n}^{2\mathtt{m}-2}} \lessapprox \frac{h^{2q-2}}{\mathtt{n}^{2\mathtt{m}-3}}, \quad \mathtt{n} \geq 1.   %
\end{equation*}
\item[(ii)] Furthermore, the $L^2$-norm error in Theorem \ref{L2ErrEstmatesThm4.6} can be estimated by 
\begin{equation}
\|u-u_h^{dg}\|_{L^2(\Omega)}\lessapprox C_{47}\,\frac{h^{q}}{\mathtt{n}^{\mathtt{m}-1/2}}\, \|u\|_{H^{\mathtt{m}}(\Omega, \mathcal{T}_h)}.\label{4.11energyestimates}
\end{equation}
Since $\frac{h^{2q-2}}{\mathtt{n}^{2\mathtt{m}-3}} \leq \frac{h^{q}}{\mathtt{n}^{\mathtt{m}-1/2}}$ for $\mathtt{n} \geq 1$, $\mathtt{m} \geq 2$.
\item[(iii)] For all $\mathtt{n}\geq 1$ and $\mathtt{m}\geq 2$, the $L^2$-norm convergence is given by  $\mathcal{O}\big(h^{q}/\mathtt{n}^{\mathtt{m}-1/2}\big)$, which is optimal in $h$ and sub-optimal in $\mathtt{n}$ (total degree of polynomial). 
\end{itemize}
\end{remark}
%%%%%%%%%%%%%%%%%%%%%%%%%%%%%%%%%%%
The following section confirms the error analysis we established in the previous section.
%%%%%%%%%%%%%%%%%%%%%%%%%%%%%%%%%%%%%%%%%%%%%%%%%%%%%%%%%%%%%%%%%%%%%%%%%%%%%%%
\section{Numerical Assessments}
In this section, we illustrate the theoretical findings with two examples. We set up a simple elastic unit-square domain with a manufactured solution in the first example. In contrast, the second example is considered a domain under anti-plane shear loading containing a single crack in two-dimensional. The numerical results are obtained by a computational code developed using \textsf{FreeFEM$++$} as a frontend wrapper \cite{MR3043640}. 
The computational domain is discretized using shape-regular triangles.  The material parameter (shear modulus $\mu$) and $\alpha$ are kept as $1.0$ unless stated otherwise. We will compute the errors in the $L^2$-norm and $\mathcal{E}$-energy norm. Further, the order of convergence is presented for both $L^2$ and energy norms. The following algorithm illustrates the steps taken in the entire computational procedure. 
%%%%%%%%%%  ////////////////////////  ALGORITHM for Example-2 ??///////???/
\begin{table}[h!] \label{Algorithm}
\begin{center}
\addtolength{\tabcolsep}{10pt}
\begin{tabular}{ l l}
 \hline
 \multicolumn{2}{l}{\cellcolor{red!15}\texttt{Algorithm $5.1$:\;\;Adaptive algorithm for nonlinear strain-limiting model}} \\ 
 \hline 
 \multicolumn{2}{l}{Given tolerance $(tol)$, the parameters $\alpha,\, \beta,\, \mu$ and the source function $f$} \vspace{0.2cm} \\
 \hline    \vspace{0.3cm}
 \textbf{Step-I}\cellcolor{brown!8}&\cellcolor{brown!8} Initialize the mesh $\mathcal{T}_h=\mathcal{T}_h^n$ with mesh size $k=2^n$  \vspace*{0.2cm}\\
 
\textbf{Step-II}\cellcolor{brown!8} &\cellcolor{brown!8} Compute $u_{h, n}^{dg}$ by solving the discrete model problem \eqref{dgformulation} on $\mathcal{T}_h^n$   \\ 

\cellcolor{brown!8}  &\cellcolor{brown!8} \hspace{0.5cm}  Compute the {\it \textbf{ err} } and absolute value of the gradient, i.e., $|\nabla u_{h, n}^{dg}|$ \vspace*{0.2cm} \\

\textbf{Step-III} \cellcolor{brown!8}&\cellcolor{brown!8} {\bf While}\;  $ {\it \textbf{ err} } \geq tol$,\; {\bf do}  \\ %abs(\nabla u_{h, n}^{dg})

\cellcolor{brown!8}&\cellcolor{brown!8} \hspace{0.5cm} Compute $u_{h, n}^{dg}$ by solving the discrete model problem \eqref{dgformulation} on $\mathcal{T}_h^n$\\

\cellcolor{brown!8}  &\cellcolor{brown!8} \hspace{0.5cm}  using Picards-iteration method \\

\cellcolor{brown!8}  &\cellcolor{brown!8} \hspace{0.5cm}  Compute the {\it \textbf{ err} } and the absolute value of the gradient \\
            
\cellcolor{brown!8}&\cellcolor{brown!8} \hspace{1 cm} {\bf If}\;  $ {\it \textbf{ err} } < tol$,\; {\bf do}  \\ %abs(\nabla u_{h, n}^{dg})
\cellcolor{brown!8}&\cellcolor{brown!8} \hspace{1.5 cm}  \textbf{Break}; \\
\cellcolor{brown!8}&\cellcolor{brown!8} \hspace{1 cm} {\bf End If} \\
\cellcolor{brown!8}&\cellcolor{brown!8} \hspace{0.5cm}  Compute $L^2$ \& $\mathcal{E}$-errors,  stress and strain  $T_{13},\, T_{23},\, \epsilon_{13}$ and \, $\epsilon_{13}$ \\
\cellcolor{brown!8}&\cellcolor{brown!8} {\bf End While} \vspace{0.2cm}\\

\textbf{Step-IV}\cellcolor{brown!8}&\cellcolor{brown!8} \texttt{Go To {\bf Step-I} for refining the mesh}\\ 
\cellcolor{brown!8}&\cellcolor{brown!8} \hspace{0.5cm}  Set $n=n+1$, mesh size $k=2^{n+1}$ and transfer the solution $u_{h, n}^{dg}$\\ 

\cellcolor{brown!8}&\cellcolor{brown!8} \hspace{0.5cm}  Repeat the steps\\
 \hline
\end{tabular}
\end{center}
\end{table}
%%%%%%%%%%%%%%%%%%%%%%%%%%%%%%%%%%%%%%%%%%%%
Additionally, the percentage relative error in the $L^2$ and the energy $\mathcal{E}$ are calculated by using the following  formulas 
\begin{equation*}
\% ~L^2: R. E. = \frac{\|e\|_{L^2}}{\|u\|_{L^2}}\times 100, \quad \text{and} \quad \% ~\mathcal{E}: R. E. = \frac{\|e\|_{\mathcal{E}}}{\|u\|_{\mathcal{E}}}\times 100,
\end{equation*}
and the experimental order of convergence (EOC) by
\begin{equation*}
EOC = \frac{log(E_{i})-log(E_{i-1})}{log(h_i)-log(h_{i-1})},
\end{equation*}
where $E_{i}$'s and $h_{i}$'s denote the errors and the corresponding mesh size.  
 %%5%%%%%%%
\begin{exam}[A simple elastic domain] \label{exm1}
Let $\Omega$ be a unit-square domain, i.e., $\Omega:=[0,1] \times [0,1]$. Consider the manufactured solution 
$$u(x,y) = x^{5/2}(1-x)y^{5/2}(1-y)$$
of a special type of sub-class of the theoretical model \eqref{contmodel}-\eqref{dbdry}. That is,
%%%%%%%%%%%
\begin{subequations}
\begin{eqnarray}
- \nabla \cdot \left(\frac{\nabla u }{2\left(1 + |\nabla u|\right)} \right)&=&f \quad ~~~~~\text { in } ~~ \Omega, \label{contmodelnu}\\
u &=& 0 ~ \quad ~~~~ \text { on } ~~ \Gamma \label{dbdrynu}
\end{eqnarray}
\end{subequations}
with $\Gamma:= \bar{\Gamma}_1\cup \bar{\Gamma}_2\cup \bar{\Gamma}_3 \cup \bar{\Gamma}_4$
\begin{figure}[h!]
\centering
\begin{tikzpicture}[scale=1.25]
\filldraw[draw=black, thick] (0,0) -- (4,0) -- (4,4) -- (0,4) -- (0,0);
\shade[inner color=red! 25, outer color=blue!25] (0,0) rectangle (4,4);
\node at (-0.35,-0.24)   {$(0,0)$};
\node at (4.24,-0.24)   {$(1,0)$};
\node at (-0.35,4.24)   {$(0,1)$};
\node at (4.25, 4.25)   {$(1,1)$};
\node at (-0.3, 2)   {$\Gamma_{4}$};
\node at (4.4, 2)   {$\Gamma_{2}$};
\node at (2, -0.26)   {$\Gamma_{1}$};
\node at (2, 4.3)   {$\Gamma_{3}$};
%%%%%%%%%%%%%%%%
    \def\xOrigin{5.5}
    \def\yOrigin{0.5} 
    % Draw x-y axes
    \draw[->, color=red, thick] (\xOrigin, \yOrigin) -- (\xOrigin + 1, \yOrigin) node[right] {$x$};
    \draw[->, color=blue, thick] (\xOrigin, \yOrigin) -- (\xOrigin, \yOrigin + 1) node[above] {$y$};
\end{tikzpicture}
\caption{A domain and the boundary indicators.}
\label{fig:h-conv}
\end{figure}
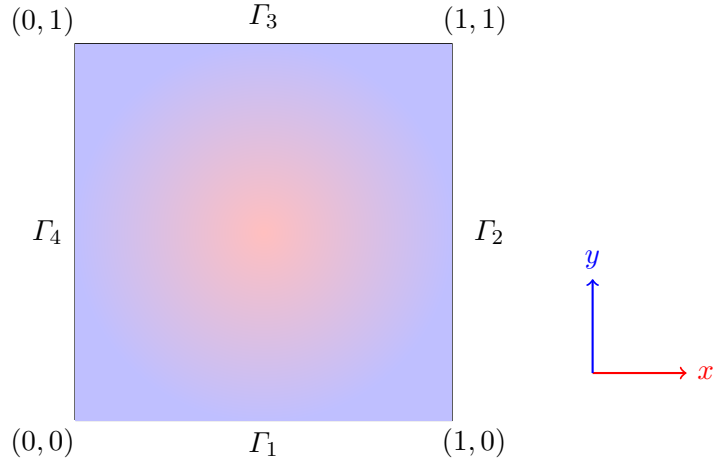
\end{exam}
%%%%%%%%%%%%%%  TEXT FOR EX-1

We compute the source function $f(x,y)$ by using the exact solution $u(x,y)$ in the problem  \eqref{contmodelnu}. We set up the DGFEM to approximate the solution to the above boundary value problem \eqref{contmodelnu}-\eqref{dbdrynu}. The computational mesh with $128 \times 128$ elements and the corresponding surface mesh is depicted in n Figure \ref{Ex1unifsfmesh}. Figure \ref{Ex1exactcgsolalpha} shows the profiles of the exact solution (left) and the solution (right) obtained from the proposed numerical method. The DGFEM computations are done for the fixed penalty parameter $\sigma = 100$  on uniform meshes. We have used the piece-wise $P_2$ discontinuous elements in DGFEM. The nonlinear problem is linearized using Picard's algorithm.

For the $L^2$-norm and the energy norm $\mathcal{E}$, we studied the errors and their corresponding experimental order of convergence (EOC) using the $h$-version refinement, respectively.  Please refer to Tables \ref{table1examp1p1dc}-\ref{tab2ex1p3dc} for the convergence rates of the DGFEM solution.   From the Tables \ref{table1examp1p1dc}-\ref{tab2ex1p3dc}, one may observe that the obtained EOCs are optimal using $h$-refinement. The Figure \ref{Ex1L2H1eoc} shows the convergence plot for the $h$-refinement. Note that the left picture in Figure \ref{Ex1L2H1eoc} shows the $L^2$-norm convergence; however, the energy $\mathcal{E}$-norm convergence is reflected in the right picture of Figure \ref{Ex1L2H1eoc}. 

Further,  the convergence results for the $p$-version are presented in Table \ref{pversionref}. It is clear that the $L^2$-norm and energy norm $\mathcal{E}$ errors are decreasing, while the degree of the polynomials ($n_i$) is increasing. We have used the discontinuous polynomials P1dc,\, P2dc,\, P3dc, and P4dc of order $1,\,2,\,3$ and $4$, respectively. The mesh size is kept fixed in all computations, $h=1/30$. The table \ref{pversionref} contains the percentage of the relative error in the $L^2$ and the energy norm $\mathcal{E}$, where $e=u-u_h^{dg}$. Figure \ref{pversion} indicates that the percentage errors decrease with increasing polynomial order.  

Moreover, the Figure \ref{hpversion} (left) shows the log-log plot of $L^2$ and energy $\mathcal{E}$ errors in the $hp-$refinement, while the right plot depicts the \% relative errors for both the refinements. For this numerical test, we simultaneously allow refinements ($h$ and $p$).  The results are taken at the various mesh size ($h$) $1/5$, $1/10$, $1/15$ and $1/20$, respectively, however, only up to $4^{th}$ order polynomials are considered.  Table \ref{hptable} shows that the $hp$-version convergence is faster than the $p$-version. Since \% relative errors reach zero only at the mesh size  $20\times 20$, one can see in Figure \ref{hpversion}.
%%%%%%%%%  FIGURE EX-1  //////
%%%%%%%%%%%%     Table-1 for Example -2
\begin{figure}[t!] \label{table1examp1p2dc}
\centering
\begin{subfigure}[b]{0.5\textwidth}
\centering
\includegraphics[width=0.99\linewidth]{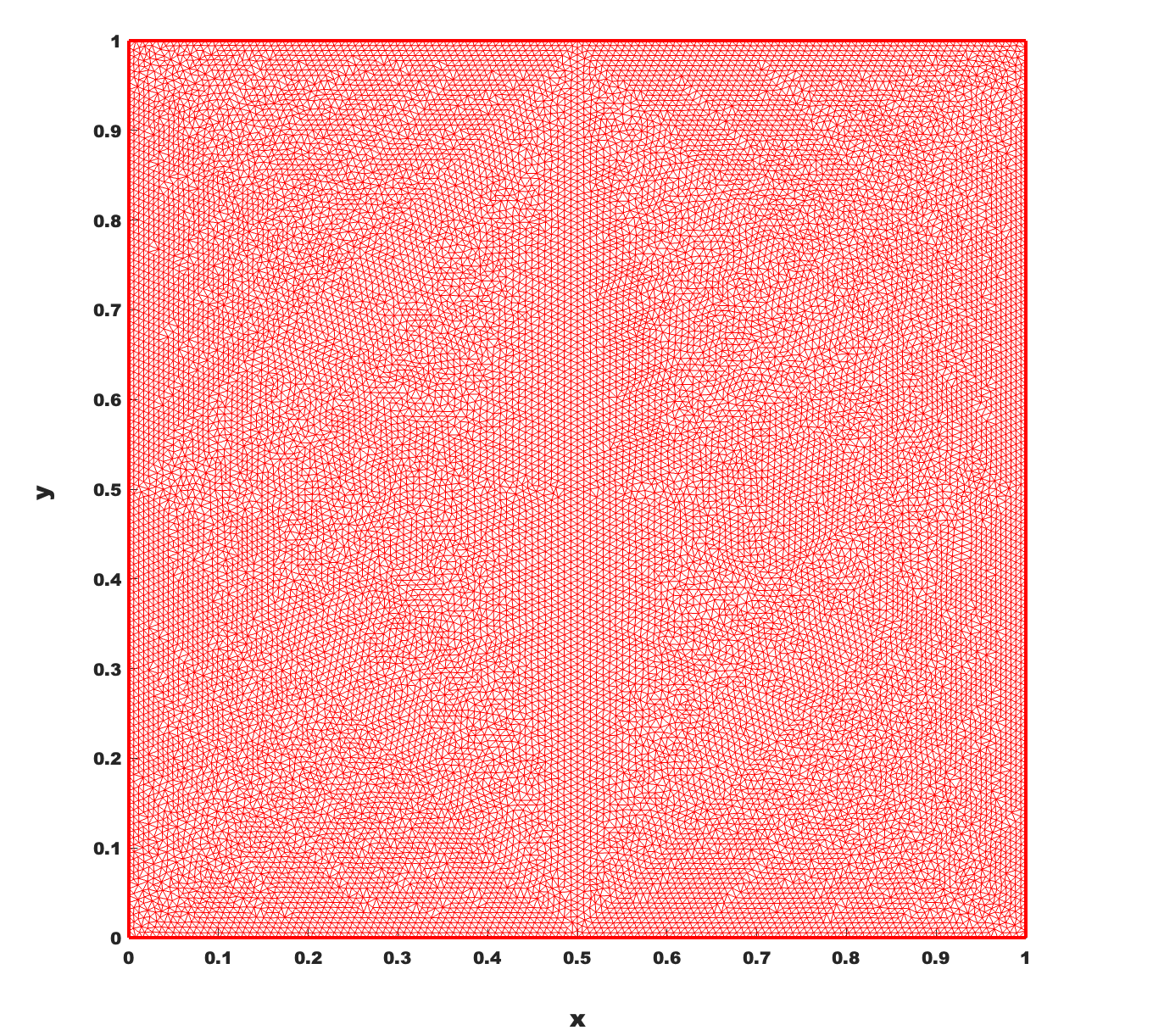} 
\end{subfigure}
\hfill
\begin{subfigure}[b]{0.49\textwidth}
\centering
\includegraphics[width=0.99\linewidth]{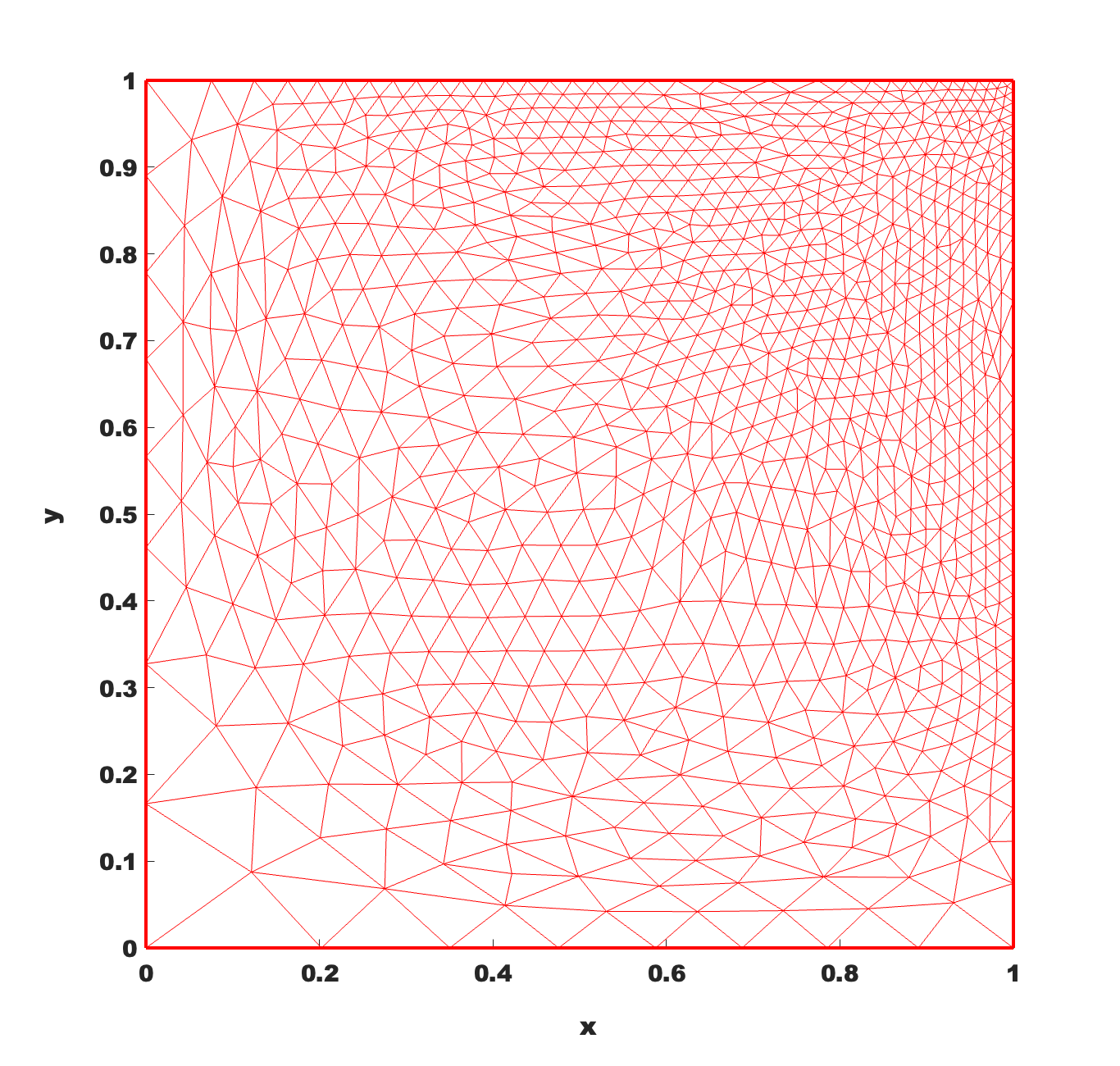}
\end{subfigure}
\caption{Uniform mesh with mesh size $128\times 128$ and the corresponding adaptive meshes.} \label{Ex1unifsfmesh}%\vspace{1.5cm}
% \end{figure}
% %%%%%%%%%%%%%%
% \begin{figure} \label{Ex2Lsolsfmesh}
\centering
\begin{subfigure}[b]{0.5\textwidth}
\centering
\includegraphics[width=0.99\linewidth]{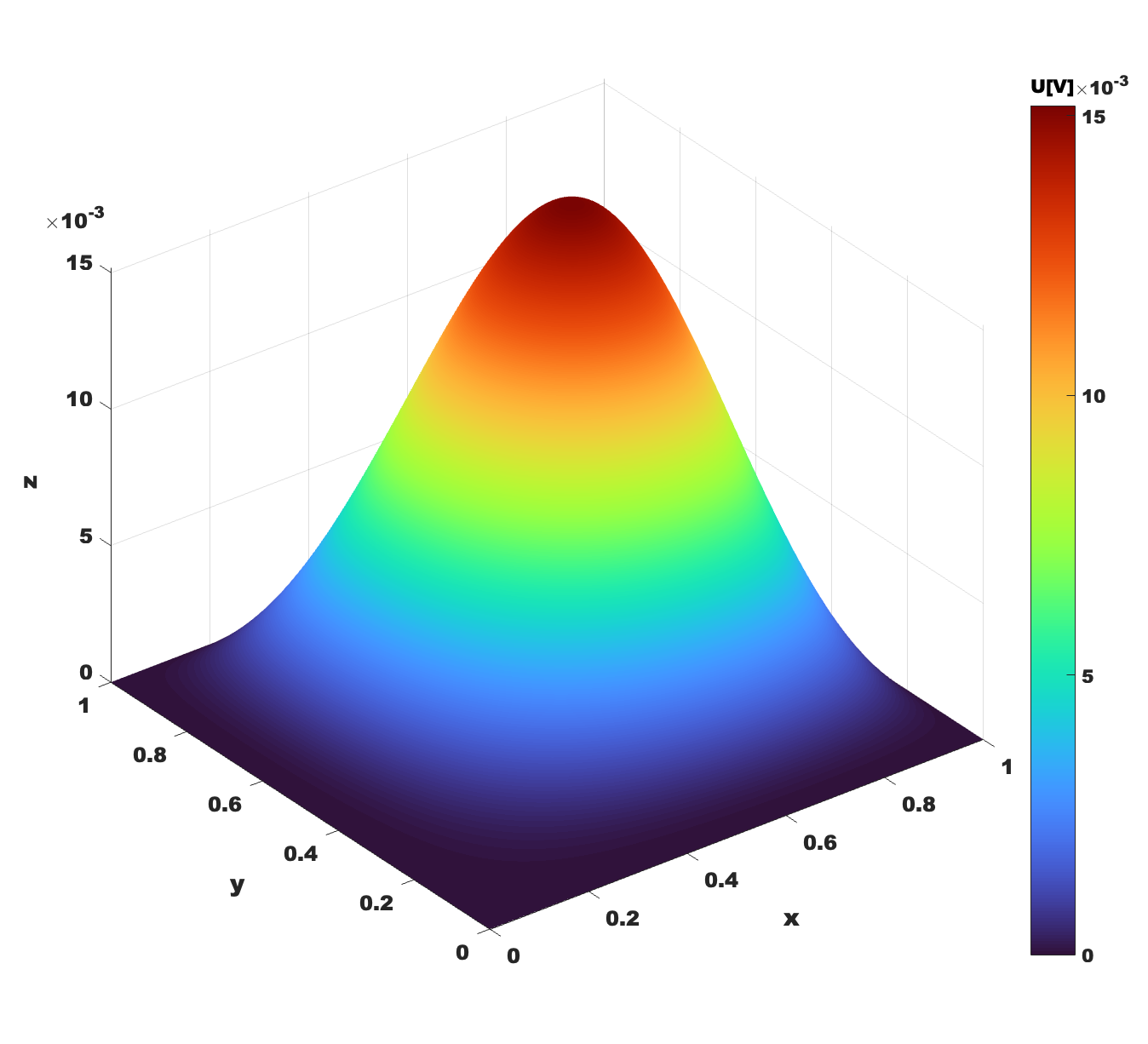} 
\end{subfigure}
\hfill
\begin{subfigure}[b]{0.49\textwidth}
\centering
\includegraphics[width=0.99\linewidth]{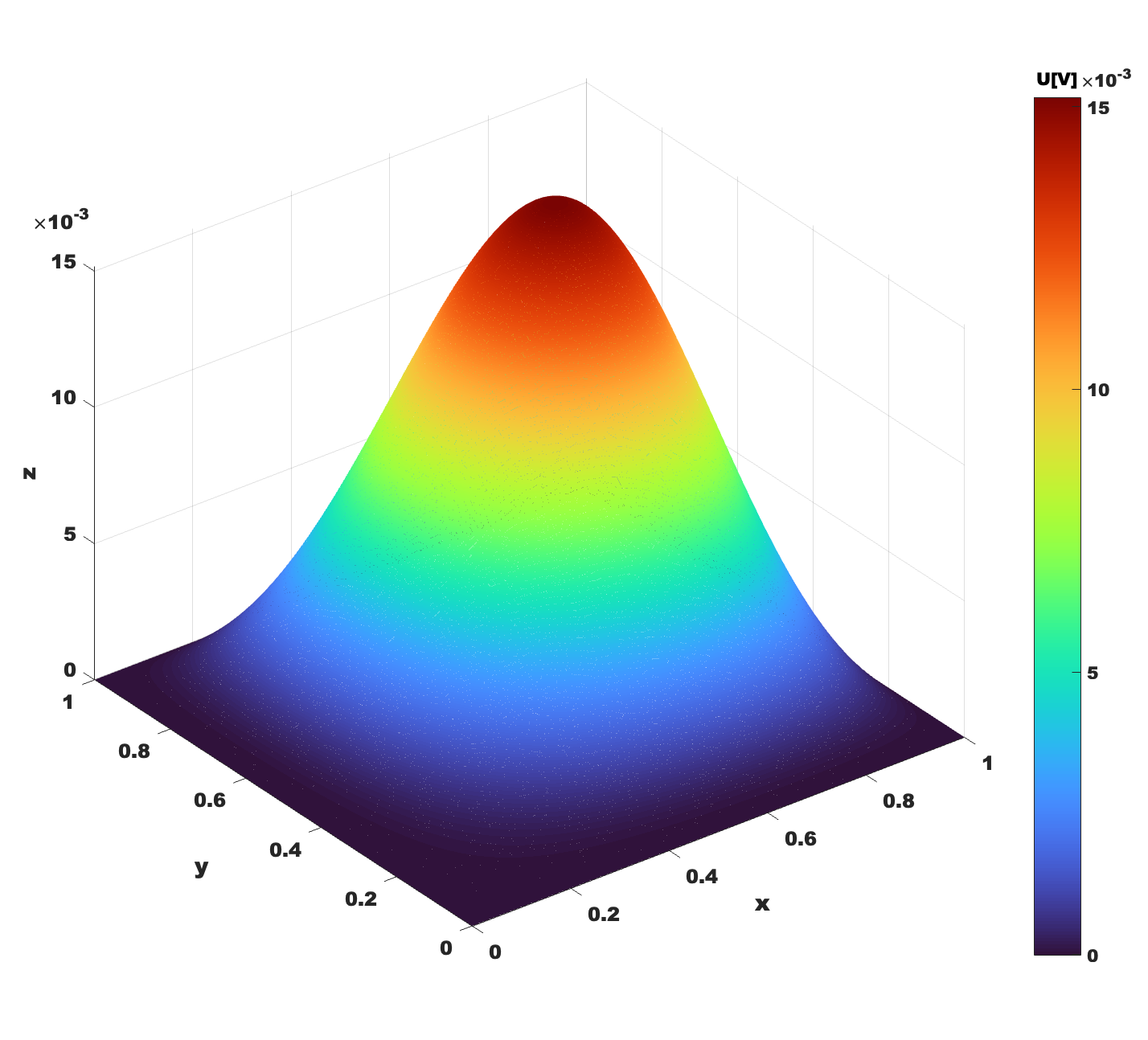}
\end{subfigure}
\caption{Profiles of Exact solution and the approximated DG  solution on the uniform mesh for the penalty $\sigma_i=10000$ and  fixed parameters $\alpha=1.0,\, \beta=1.0$ and $\mu=1.0$.} \label{Ex1exactcgsolalpha}
\end{figure}
%%%%%%%%%%%%     Table-1 for Example -1
\begin{table}[H]
\begin{center}
\caption{$L^2$-Error and $\mathcal{E}$-Error and their EOCs for P1dc-Elements} \label{table1examp1p1dc} % with parameters $\alpha=1.0,\, \beta=1.0,\, \mu=0.5.$
\addtolength{\tabcolsep}{10.5pt}
\begin{tabular}{ |c||c|c|c| c|}
 \hline
 \multicolumn{5}{|c|}{\cellcolor{red!15} Parameters: $\alpha=1.0,\, \beta=1.0,\, \mu=0.5$} \\
 \hline
\cellcolor{brown!8} Mesh Size & $L^2$-Error & \cellcolor{brown!8} EOC &  $\mathcal{E}$-Error & \cellcolor{brown!8} EOC \\
 \hline
\cellcolor{brown!8}  $4\times 4$   & 2.3746e-02 &  \cellcolor{brown!8} --    &  3.3881e-02    & \cellcolor{brown!8} --  \\

\cellcolor{brown!8}  $8\times 8$   & 4.9324e-03 &  \cellcolor{brown!8} 2.2673   &  1.7014e-02    & \cellcolor{brown!8} 0.9937 \\

\cellcolor{brown!8} $16\times 16$ &  1.0958e-03  & \cellcolor{brown!8}  2.1703   &   8.4519e-03       & \cellcolor{brown!8} 1.0094 \\

\cellcolor{brown!8} $32\times 32$ & 2.4976e-04 & 
\cellcolor{brown!8}  2.1333 & 4.1834e-03   &     \cellcolor{brown!8}  1.0146  \\

\cellcolor{brown!8} $64\times 64$ &  5.9817e-05   & \cellcolor{brown!8}  2.0619   & 2.0285e-03   & \cellcolor{brown!8}  1.0443\\

\cellcolor{brown!8} $128\times 128$& 1.4763e-05  & \cellcolor{brown!8}  2.0186     &  9.6630e-04 &     \cellcolor{brown!8} 1.0699 \\
\hline
\end{tabular}
\end{center}
\end{table} %\vspace{1.5cm}
%%%%%%%%%% %%%%%%%%%%%% 
%%%%%%%%%%%%     Table-21 for Example -1
\begin{table}[H]
\begin{center}
\caption{$L^2$-Error and $\mathcal{E}$-Error and their EOCs for P2dc-Elements.} \label{tab1ex1}
\addtolength{\tabcolsep}{13.5pt}
\begin{tabular}{ |c||c|c|c| c|}
 \hline
 \multicolumn{5}{|c|}{\cellcolor{red!15} Parameters: $\alpha=1.0,\, \beta=1.0,\, \mu=0.5$} \\
 \hline
\cellcolor{brown!8} Mesh Size & $L^2$-Error & \cellcolor{brown!8} EOC &  $\mathcal{E}$-Error & \cellcolor{brown!8} EOC \\
 \hline
 \cellcolor{brown!8}  $4\times 4$   &  3.5072e-05 &  \cellcolor{brown!8} --   &  5.1831e-04   & \cellcolor{brown!8} --  \\
 
\cellcolor{brown!8}  $8\times 8$ & 3.9039e-06     &  \cellcolor{brown!8} 3.1673   &  1.5306e-04   & \cellcolor{brown!8} 1.7597 \\

\cellcolor{brown!8} $16\times 16$ & 4.5434e-07   & \cellcolor{brown!8} 3.1031   &  4.3642e-05  & \cellcolor{brown!8} 1.8103  \\

\cellcolor{brown!8} $32\times 32$ & 5.6623e-08   &\cellcolor{brown!8}   3.0043      &  1.1306e-05  &     \cellcolor{brown!8}  1.9487  \\

\cellcolor{brown!8} $64\times 64$ &  7.1381e-09   & \cellcolor{brown!8}  2.9877   &  2.8369e-06  & \cellcolor{brown!8} 1.9946 \\

\cellcolor{brown!8} $128\times 128$&  9.0387e-10   & \cellcolor{brown!8}  2.9872     &  7.0218e-07  &     \cellcolor{brown!8} 2.0144 \\
\hline
\end{tabular}
\end{center}
% \end{table}
%\begin{table}[H]
\begin{center}
\caption{$L^2$-Error and $\mathcal{E}$-Error and their EOCs for P3dc-Elements}\label{tab2ex1p3dc}
\addtolength{\tabcolsep}{13.5pt}
\begin{tabular}{ |c||c|c|c| c|}
 \hline
 \multicolumn{5}{|c|}{\cellcolor{red!15} Parameters: $\alpha=1.0,\, \beta=1.0,\, \mu=0.5$} \\
 \hline
\cellcolor{brown!8} Mesh Size & $L^2$-Error & \cellcolor{brown!8} EOC &  $\mathcal{E}$-Error & \cellcolor{brown!8} EOC \\
 \hline
\cellcolor{brown!8}  $4\times 4$   &  4.3190e-06 &  \cellcolor{brown!8} --    &   2.4363e-05  & \cellcolor{brown!8} -- \\
 
\cellcolor{brown!8}  $8\times 8$   & 3.1804e-08  &  \cellcolor{brown!8} 3.7634   &  2.5915e-06   & \cellcolor{brown!8} 3.2328\\

\cellcolor{brown!8} $16\times 16$ &  2.2998e-08   & \cellcolor{brown!8}  3.7896  &  2.9123e-07  & \cellcolor{brown!8}  3.1536 \\

\cellcolor{brown!8} $32\times 32$ & 1.4669e-09   & \cellcolor{brown!8}  3.9707    &  3.5124e-08  &   \cellcolor{brown!8}  3.0516 \\

\cellcolor{brown!8} $64\times 64$ &  8.9770e-11   & \cellcolor{brown!8}    4.0304 &   4.3837e-09  & \cellcolor{brown!8} 3.0022  \\

\cellcolor{brown!8} $128\times 128$& 3.1763e-13   &\cellcolor{brown!8}    4.0856   & 5.4810e-10  &    \cellcolor{brown!8} 2.9996 \\
 \hline
\end{tabular}
\end{center}
% \end{table} 
% %%%%%%%%%  FIGURE EX-1  //////
% %%%%%%%%%% %%%%%%%%%%%%     Table-2 for Example -1   EOC (Experimental Order of Convergence)
% \begin{table}[htp]
\begin{center}
\caption{Convergence results in $p$-version refinement by fixing mesh size $30\times 30$.}\label{pversionref}
\addtolength{\tabcolsep}{9pt}
\begin{tabular}{ |c|c|c|c| c|}
 \hline
 \multicolumn{5}{|c|}{\cellcolor{red!15} Parameters: $\alpha=1.0,\, \beta=1.0,\, \mu=0.5$} \\
 \hline
\cellcolor{brown!8} $\underset{(n_i)}{\text{Poly. degree}}$ & $\|e\|_{L^2}$ & \cellcolor{brown!8} {$\%$}  $\frac{\|e\|_{L^2}}{\|u\|_{L^2}}$&  $\|e\|_{\mathcal{E}}$ & \cellcolor{brown!8} {$\%$}$\frac{\|e\|_{\mathcal{E}}}{\|u\|_{\mathcal{E}}}$ \\
 \hline
\cellcolor{brown!8}  1   &  1.0043e-04 &  16.9174  \cellcolor{brown!8}    &  3.7283e-03   & 14.4526\cellcolor{brown!8}  \\
 
\cellcolor{brown!8} 2   &  1.3906e-06 & 2.3427e-02 \cellcolor{brown!8}    & 2.9183e-05   & 1.1398e-01 \cellcolor{brown!8}\\

\cellcolor{brown!8} 3 &  9.0697e-08   & 1.5279e-03 \cellcolor{brown!8}    &  8.7052e-07 & 3.4002e-03\cellcolor{brown!8}   \\

\cellcolor{brown!8} 4 &  7.1810e-09 & 1.2096e-04 \cellcolor{brown!8}     &  4.6539e-08  &  1.8178e-04 \cellcolor{brown!8}   \\
 \hline
\end{tabular}
\end{center}

\begin{center}
\caption{Convergence in $hp$-version refinement.}\label{hptable}
\addtolength{\tabcolsep}{3pt}
\begin{tabular}{|c|c|c|c|c|c|c|}
 \hline
 \multicolumn{6}{|c|}{\cellcolor{red!15} Parameters: $\alpha=1.0,\, \beta=1.0,\, \mu=0.5$} \\
 \hline
\cellcolor{brown!8}  $\underset{(n_i)}{\text{Poly. degree}}$ & NDOF & \cellcolor{brown!8} $\|e\|_{L^2}$ & {$\%$} $\frac{\|e\|_{L^2}}{\|u\|_{L^2}}$ & \cellcolor{brown!8} $\|e\|_{\mathcal{E}}$  & {$\%$} $\frac{\|e\|_{\mathcal{E}}}{\|u\|_{\mathcal{E}}}$  \\ 
 \hline
 
\cellcolor{brown!8}  1   &  $175$ &  \cellcolor{brown!8} 5.0172e-03    &  94.2868  & \cellcolor{brown!8} 2.3242e-02 & 97.3054  \\
 
\cellcolor{brown!8}  2   & $1416$  &  \cellcolor{brown!8} 2.9929e-06   & 5.0282e-02   & \cellcolor{brown!8} 1.0241e-04 & 4.0002e-01 \\

\cellcolor{brown!8} 3 &  $5340$   & \cellcolor{brown!8}  3.0006e-8 &  5.0410e-04  & \cellcolor{brown!8} 1.9620e-06 & 7.6635e-03 \\

\cellcolor{brown!8} 4  & $14100$   & \cellcolor{brown!8}   1.32024e-09   &  2.2180e-05  &   \cellcolor{brown!8} 7.0856e-08  & 2.7676e-04 \\
 \hline
\end{tabular}
\end{center}
\end{table} 
%%%%%%%%%%%%%%  ///////////////  %%%%%%%%%% EOC  Figure for Example-1
\begin{figure} [H] 
    \centering
\begin{subfigure}[b]{0.5\textwidth}
\centering
\includegraphics[width=0.99\linewidth]{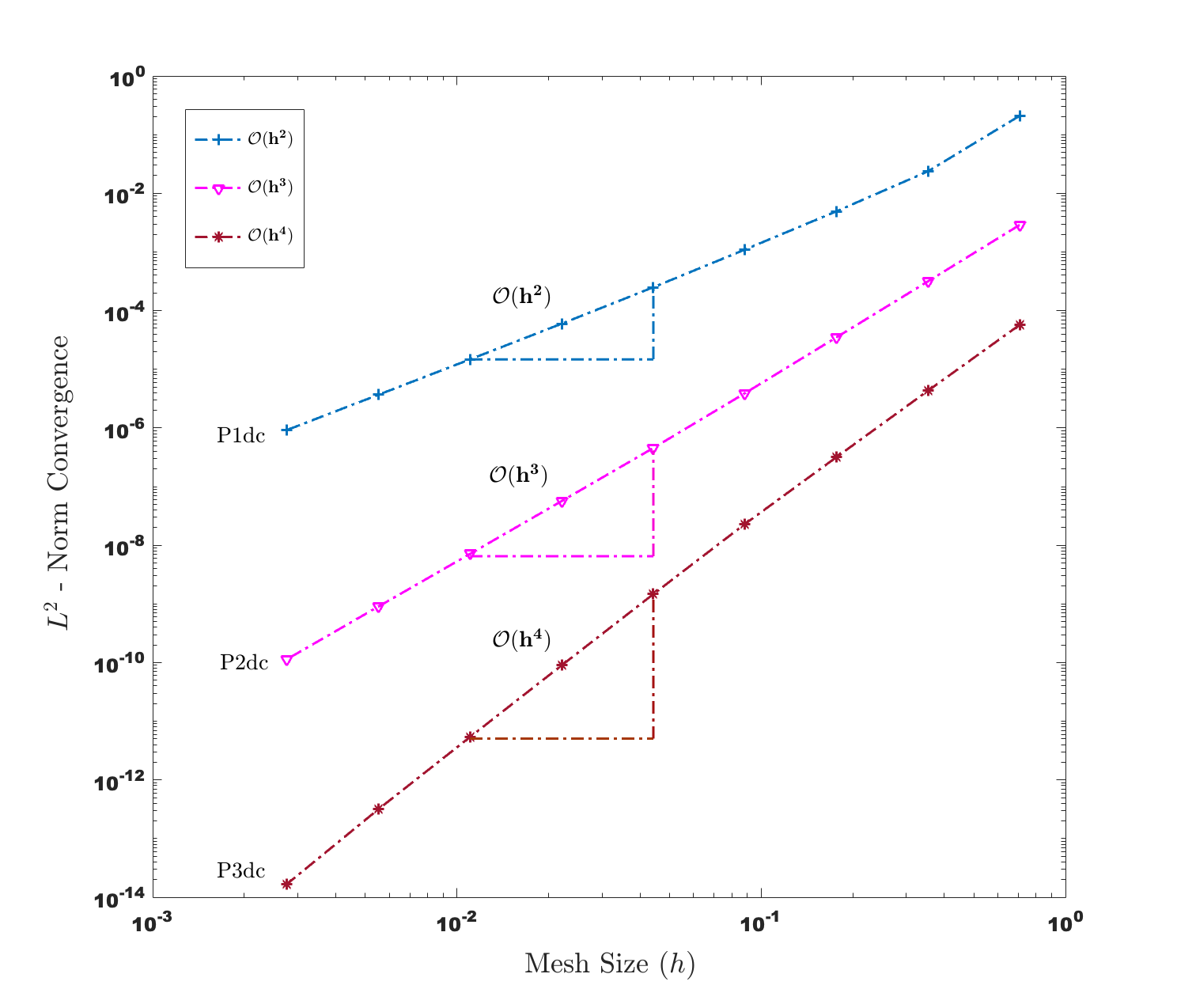} 
    \end{subfigure}
    \hfill
    \begin{subfigure}[b]{0.49\textwidth}
    \centering
 \includegraphics[width=1.0\linewidth]{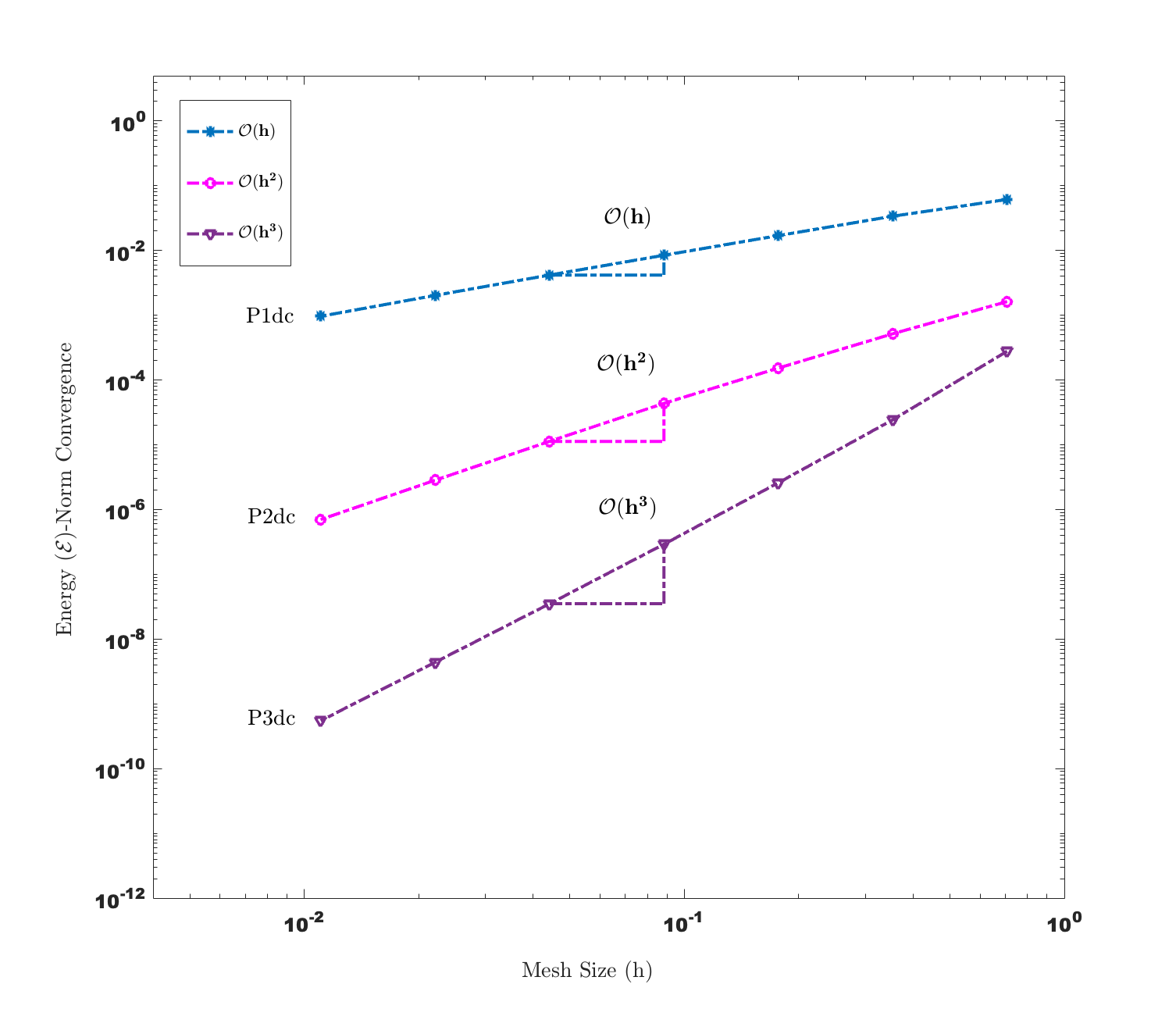} 
    \end{subfigure}
\caption{Plots of $L^2$-norm and energy ($\mathcal{E}$)-norm convergence in different polynomial spaces P1dc, P2dc, and P3dc, respectively} \label{Ex1L2H1eoc}
% \end{figure}
% %%%%%%%%%%%%%%%%%%%%%%%%%%%
% %%%%%%%%%%%%%%  /////////////// %%%%%%%% EOC  Figure for Example-1
% \begin{figure} [H] 
    \centering
\begin{subfigure}[b]{0.5\textwidth}
\centering
\includegraphics[width=0.99\linewidth]{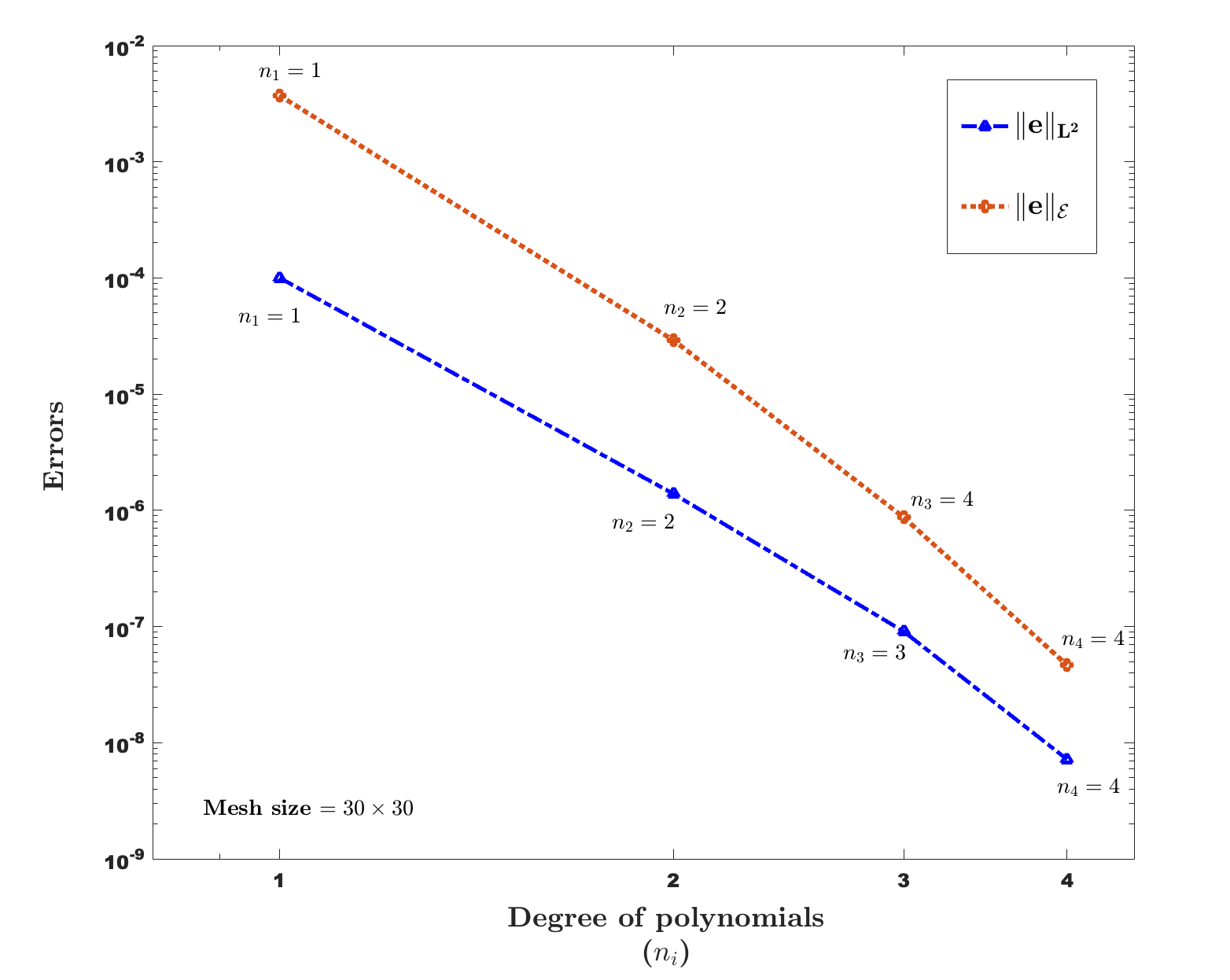} 
    \end{subfigure}
    \hfill
    \begin{subfigure}[b]{0.49\textwidth}
    \centering
 \includegraphics[width=1.0\linewidth]{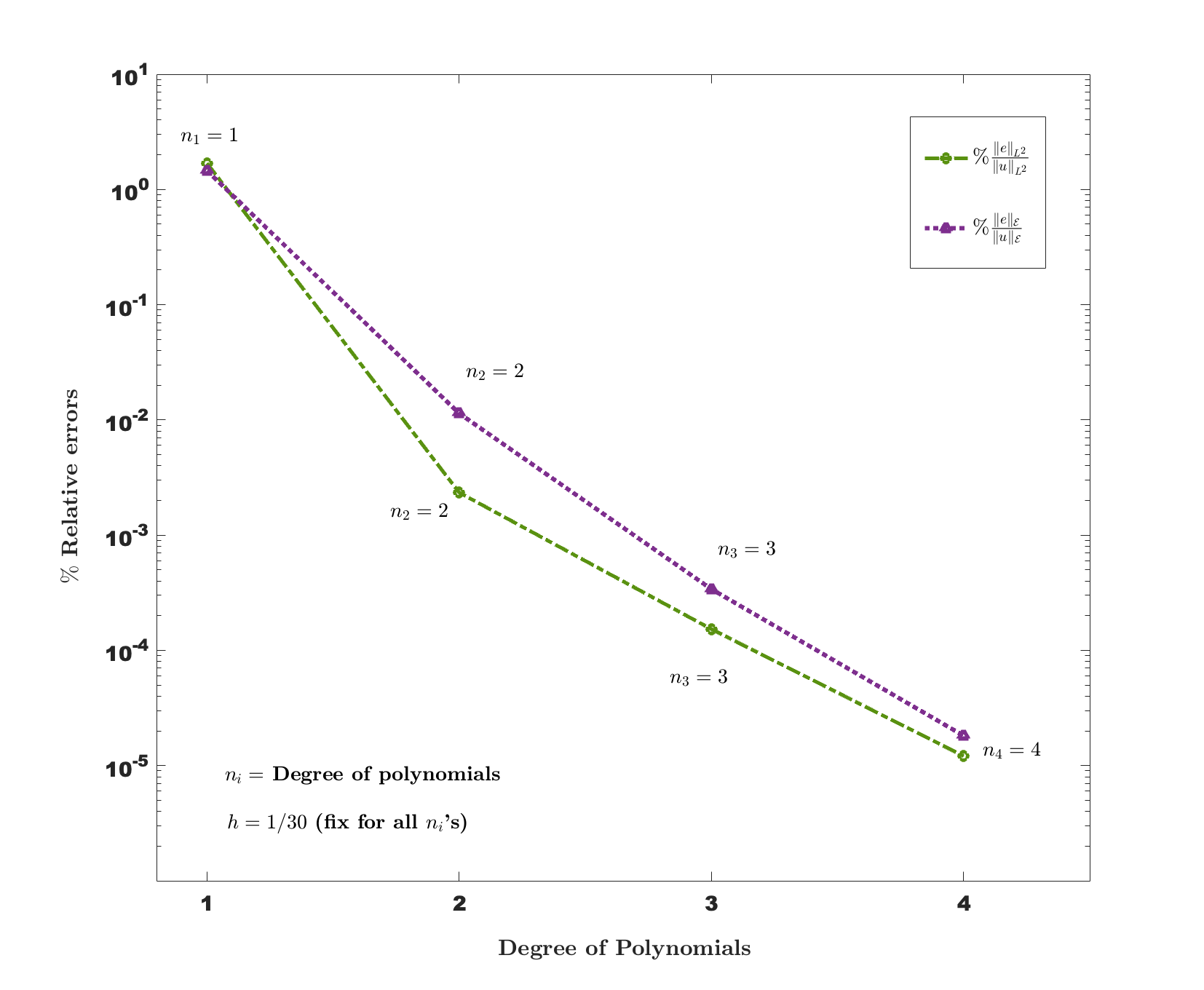} 
    \end{subfigure}
\caption{Convergence results for $p$-version refinement} \label{pversion} 
% \end{figure}
% %%%%%%%%%%%%%%%%%%%%
% %%%%%%%%%%%%%%  //////////%%%%%%% EOC  Figure for Example-1
% \begin{figure} [H] 
 \centering
\begin{subfigure}[b]{0.5\textwidth}
\centering
\includegraphics[width=0.99\linewidth]{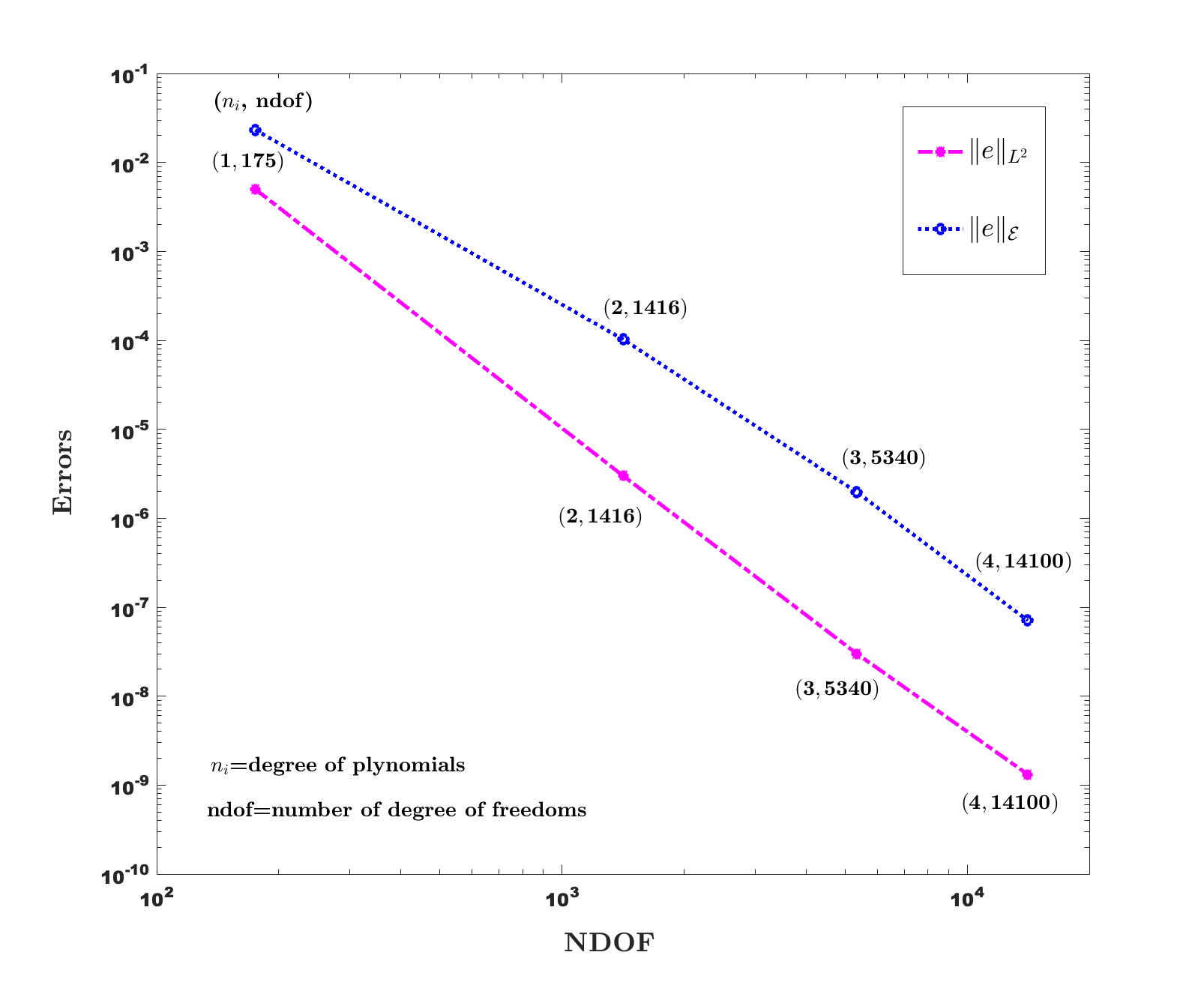} 
    \end{subfigure}
    \hfill
    \begin{subfigure}[b]{0.49\textwidth}
    \centering
 \includegraphics[width=1.0\linewidth]{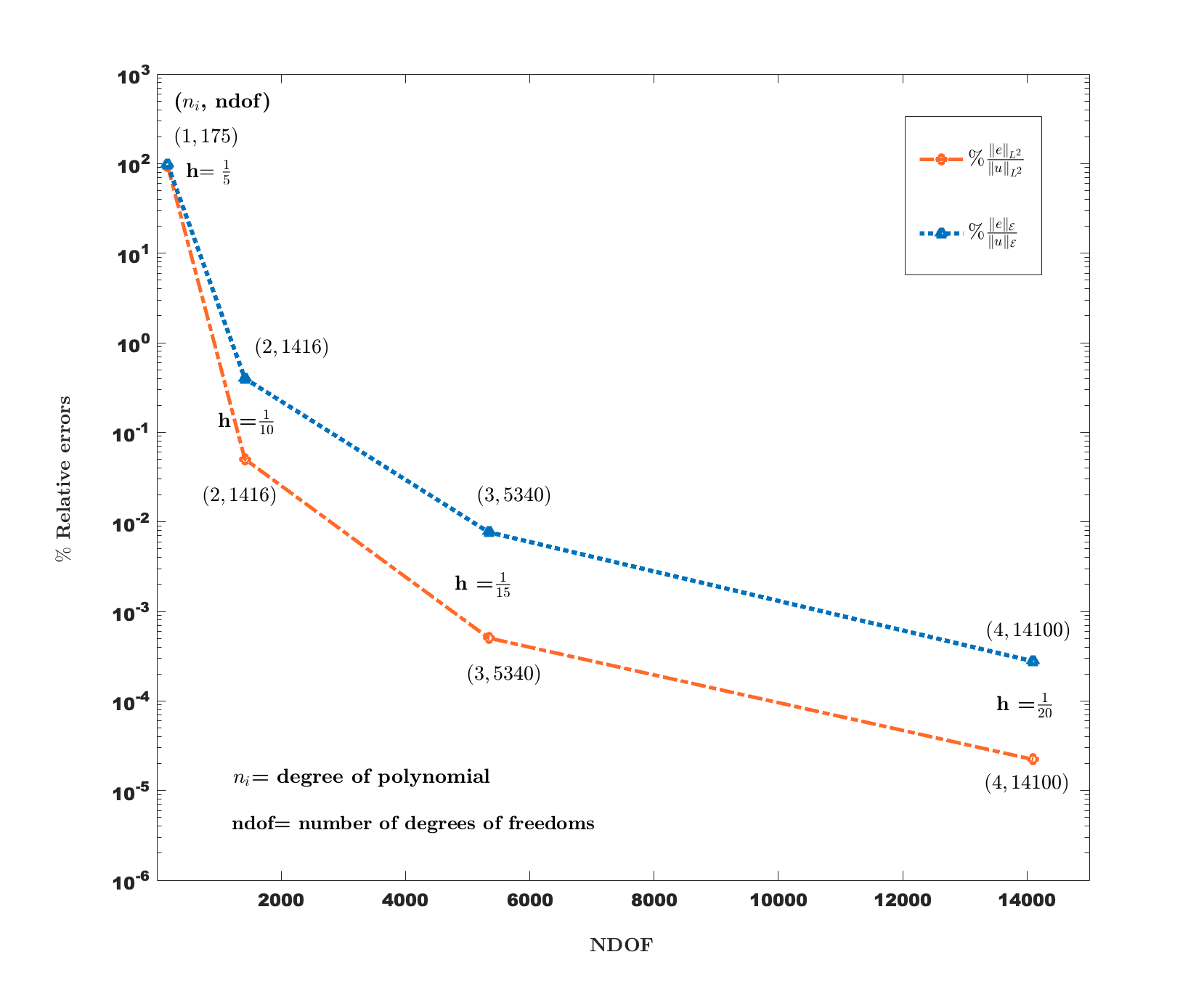} 
    \end{subfigure}
\caption{Convergence results for $hp$-version refinement} \label{hpversion} 
\end{figure}
%%%%%%%%%%%%%%%%%%%%%%%%%%%
%%%%%%%%%%%%%%%%%%%%%%%%%%%%%% EXAMPLE-2    %%%%%%%%%%%%%%%%%
%%%%%%%%%%%%%%%%%%%%%%%%%%%%%% EXAMPLE-2    %%%%%%%%%%%%%%%%
\begin{exam}[Elastic domain with a single crack] \label{crackdomainex-2}
An elastic domain containing a single crack under anti-plane shear loading
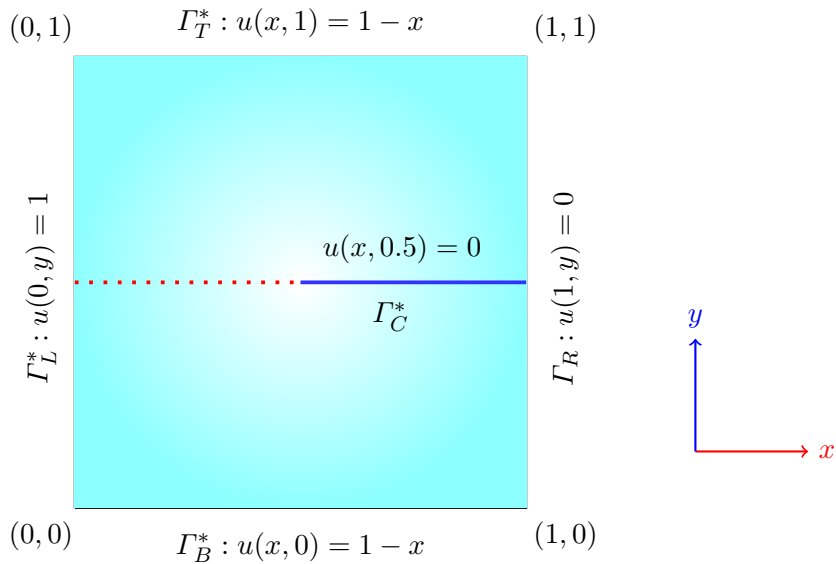
\begin{figure}[H]
\centering
\begin{tikzpicture}[scale=1.5]
\filldraw[fill=red!8, draw=black, thick] (0,0) -- (4,0) -- (4,4) -- (0,4) -- (0,0);
\shade[outer color=cyan! 45, inner color=white] (0,0) rectangle (4,4);
\draw [line width=0.5mm, blue!80]  (2,2) -- (4,2);
\draw [loosely dotted, line width = 0.5mm, red] (0,2) -- (2,2);
\node at (-0.3,-0.25){$(0,0)$};
\node at (4.35,4.25){$(1,1)$};
\node at (4.35,-0.25){$(1,0)$};
\node at (-0.3,4.25){$(0,1)$};
\node at (-0.3, 2.9)[anchor=east, rotate=90]{$\Gamma^{*}_L: u(0,y)=1$};
\node at (4.35, 2.9)[anchor=east, rotate=90]{$\Gamma_R: u(1,y)=0$};
\node at (2, -0.35){$\Gamma^{*}_{B}: u(x,0)=1-x$};
\node at (2, 4.3){$\Gamma^{*}_{T}: u(x,1)=1-x$};
\node at (2.8, 1.7){$\Gamma^{*}_{C}$};
\node at (2.9, 2.3){$u(x,0.5)=0$};
%%%%%%%%%%%%%%%%%
% Specify the origin position
\def\xOrigin{5.5}
\def\yOrigin{0.5} 
% Draw x-y axes
\draw[->, color=red, thick] (\xOrigin, \yOrigin) -- (\xOrigin + 1, \yOrigin) node[right] {$x$};
\draw[->, color=blue, thick] (\xOrigin, \yOrigin) -- (\xOrigin, \yOrigin + 1) node[above] {$y$};
\end{tikzpicture}
\caption{A square domain containing a single edge crack. }
\end{figure}
\end{exam}
%%%%%%%%%%%%%%  TEXT FOR EX-2
In this example, we test our DGFEM algorithm for a case of an elastic domain containing a single edge crack under anti-plane shear loading. Such a model is instrumental in describing the behavior of materials like brittle solids or polymers. Anti-plane shear conditions can cause fractures under torsional or twisting loads. The static anti-plane shear crack model describes how a material deforms and the stresses that arise when subjected to shear forces acting in a direction perpendicular to a plane. Within the linearized theory of elasticity, at the crack tip, both stresses and strains exhibit a singular behavior. It is interesting to test whether the model studied in this paper produces the same singularity in stresses and bounded strains in the neighborhood of the crack tip. 

The computational details for this example are as follows: the mesh is fixed at $h=1/30$, model parameters are taken as $\alpha=2.0,\, \beta=2.0$, material parameter, shear modulus, is fixed as $1.0$. The computations are done for different penalty parameters in the DG method, i.e., $\sigma =1.0,\, 10,\, 100,\, 1000,\, 10000$. The algorithm~$5.1$ is utilized in the computations. Figure \ref{Ex2bt=0.0} depicts the uniform mesh (left) and the adaptive mesh (right). One may observe that the adaptive meshes are well reflected near the crack tip, and such a mesh helps capture the singular behavior of the stress field. Figures~\ref{dgsolsigex2p1}--\ref{dgsolsigex2p3} show the DG solution at the different penalty parameters. The right of Figure~\ref{dgsolsigex2p3} depicts the DGFEM solution for the linear problem, i.e., for $\beta=0.0$, on the computational mesh.

Figure~\ref{Ex2T23E23betas} shows the effect of $\beta$ on the crack-tip stresses and strains. The computations are done for fixed $\alpha$. It is clear from the figure that the proposed model predicts stress concentration similar to the linear model. However, the strain growth is far slower in the nonlinear model with higher $\beta$ values.
%%%%%%%%%%%%     Table-1 for Example -2
%%%%%%%%%%%%%%%%%%%%%
 \begin{figure}[H]
    \centering
\begin{subfigure}[b]{0.5\textwidth}
\centering
\includegraphics[width=0.99\linewidth]{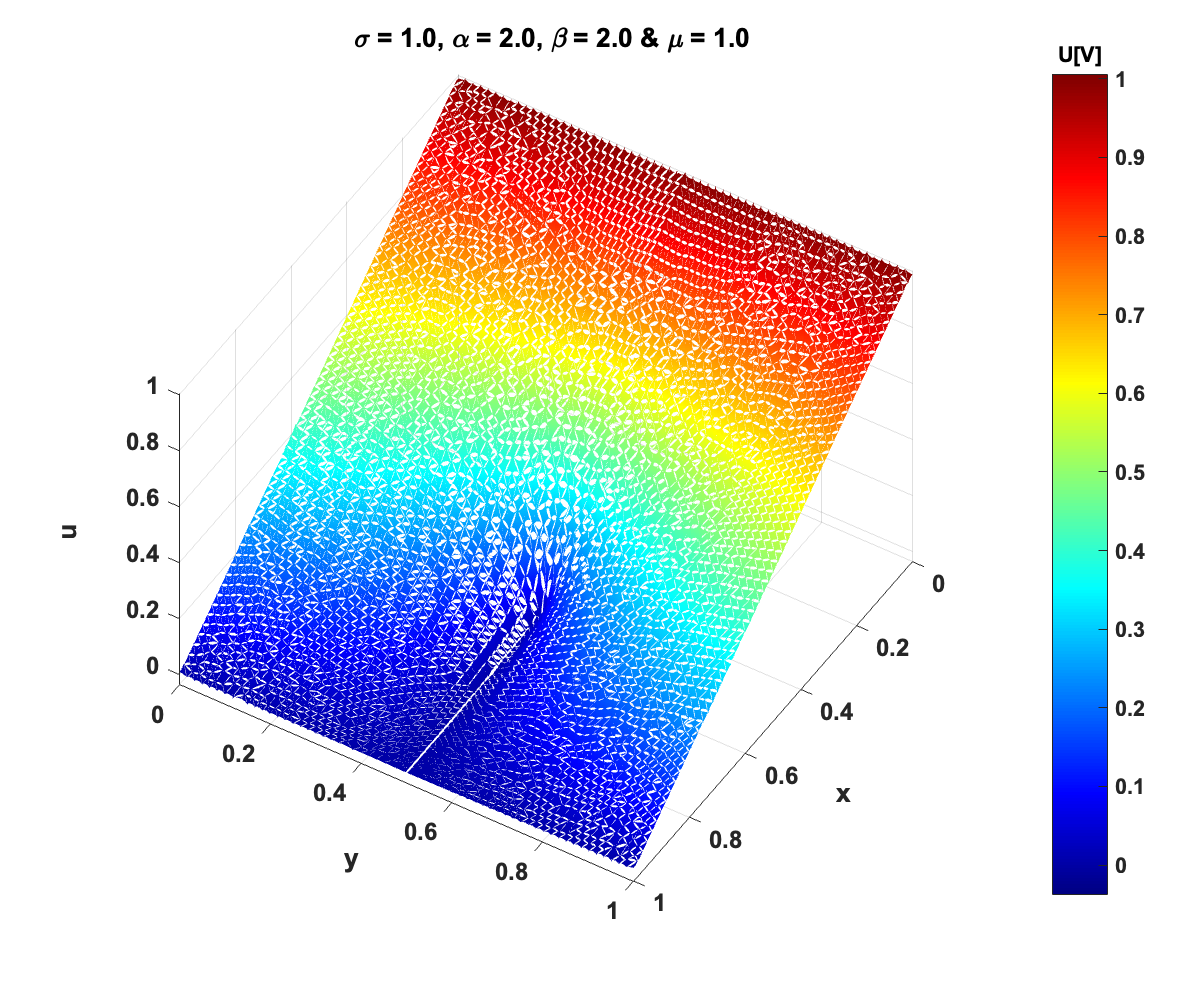} 
    \end{subfigure}
    \hfill
    \begin{subfigure}[b]{0.49\textwidth}
    \centering
 \includegraphics[width=0.99\linewidth]{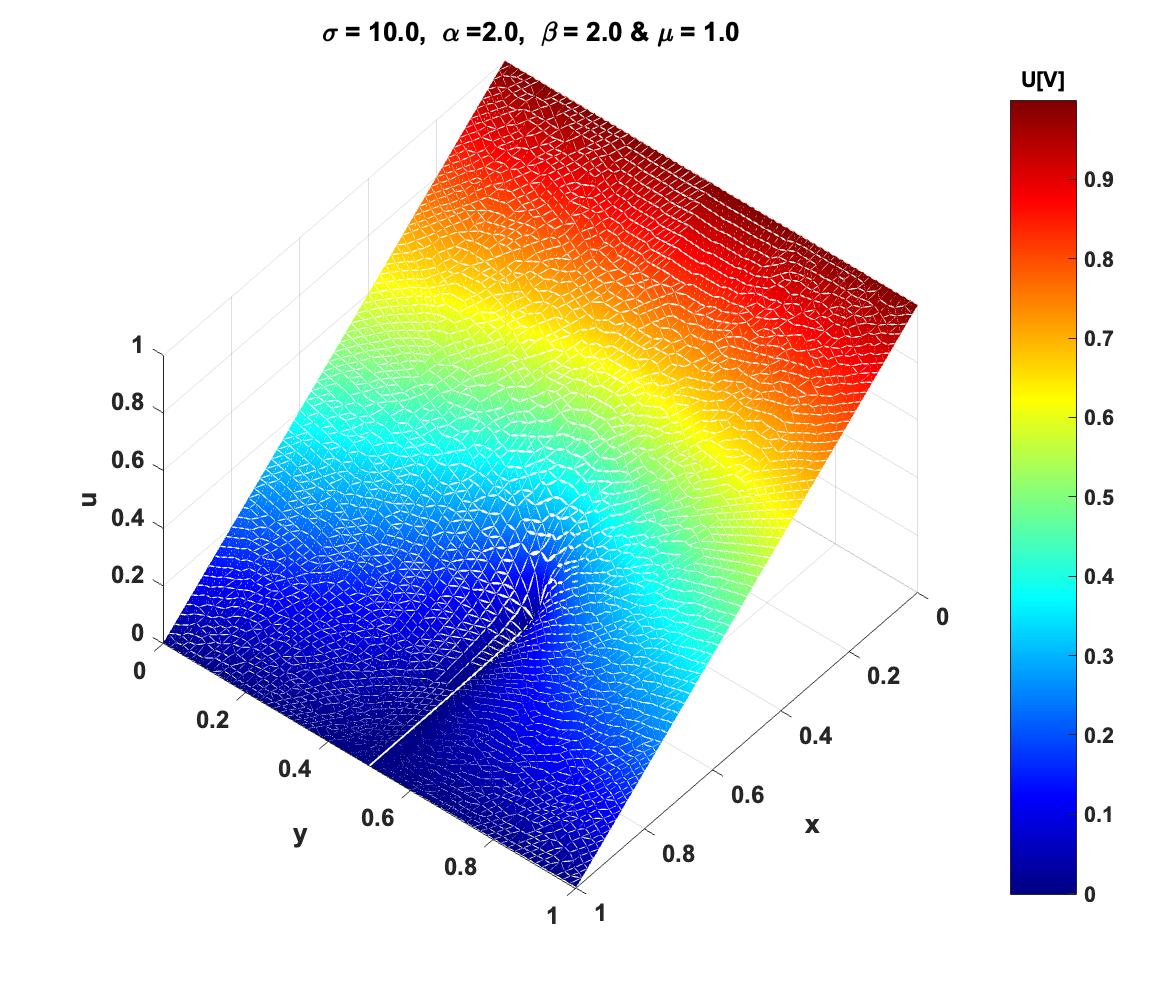} 
    \end{subfigure}
\caption{Profiles of DG solution with penalty parameters $\sigma=1.0 \, \& \, 10.0$.} \label{dgsolsigex2p1} 
% \end{figure}
% %%%%%%%%%%%%    figure-1 for Example -2
%  \begin{figure} \label{dgsol1001000}
\centering
\begin{subfigure}[b]{0.49\textwidth}
\centering
\includegraphics[width=0.99\linewidth]{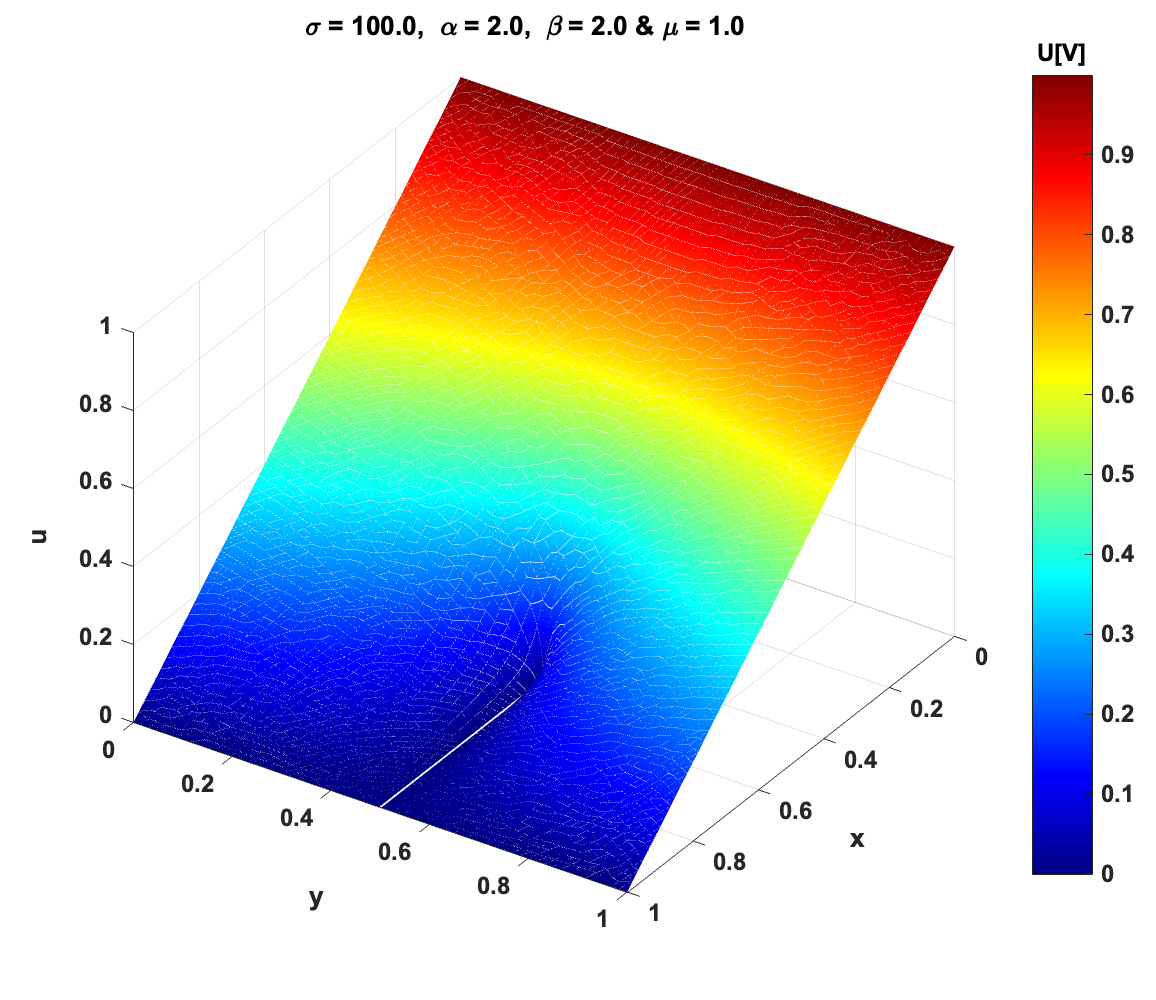} 
 \end{subfigure}
\hfill
\begin{subfigure}[b]{0.49\textwidth}
\centering
\includegraphics[width=0.99\linewidth]{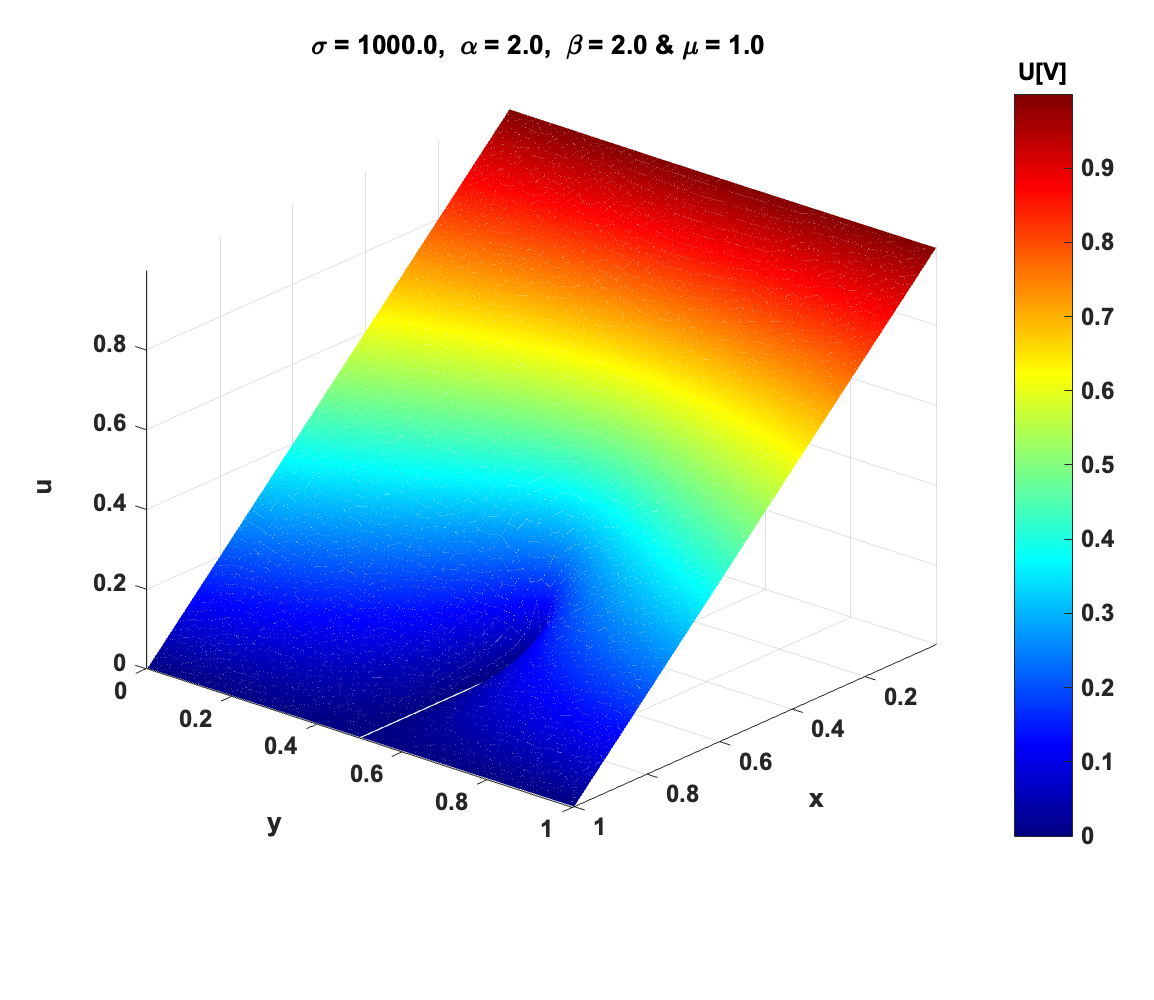} 
\end{subfigure}
\caption{Profiles of DG solution with penalty parameters $\sigma=10^2 \, \&\, 10^3$.} \label{dgsolsigex2p2} 
% \end{figure}
% %%%%%%%%%%%%    figure-1 for Example -2
% %%%%%%%%%%%%    figure-1 for Example -2
% \begin{figure}[h!] 
    \centering
\begin{subfigure}[b]{0.49\textwidth}
    \centering
 \includegraphics[width=0.99\linewidth]{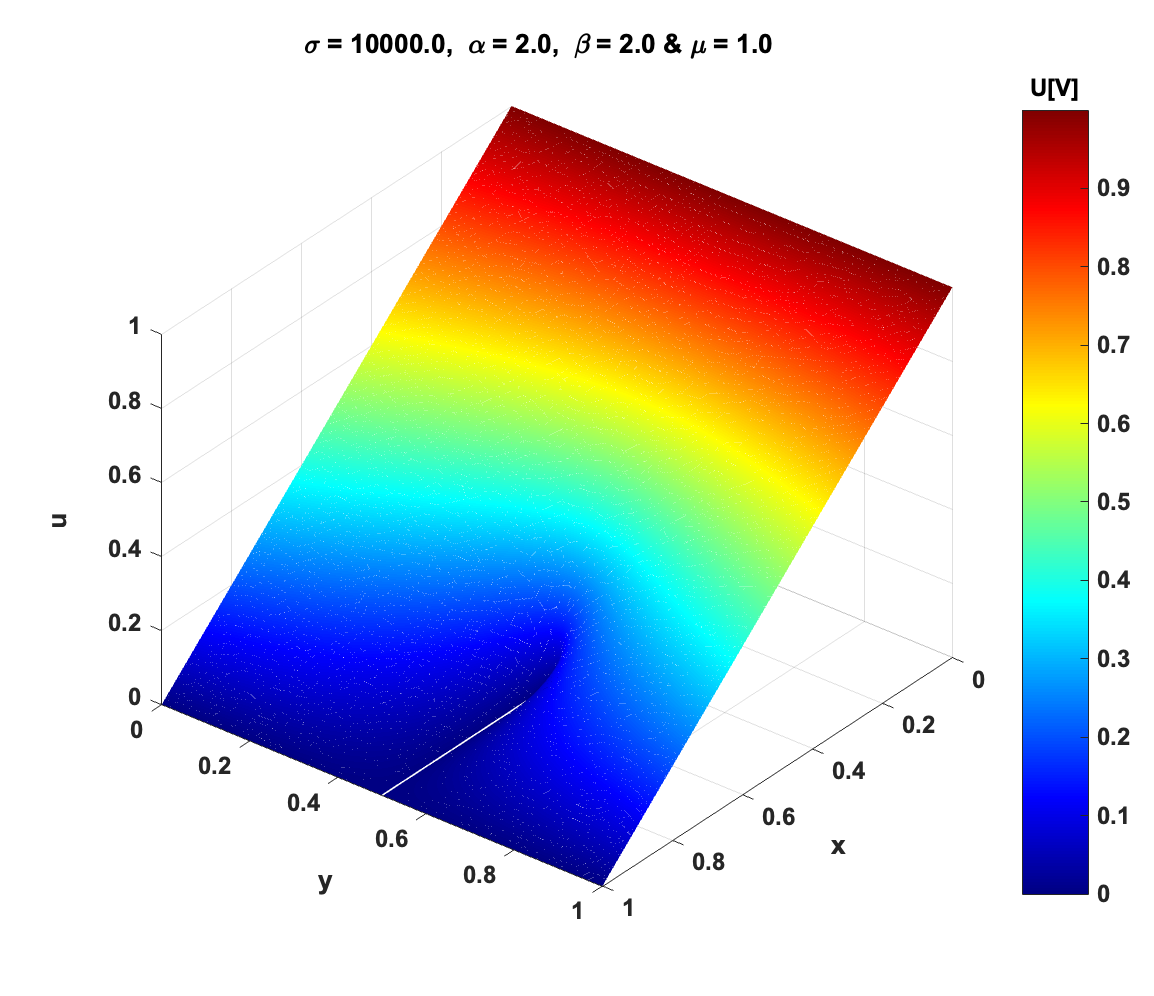} 
    \end{subfigure}
    \hfill
    \begin{subfigure}[b]{0.49\textwidth}
    \centering
 \includegraphics[width=1.0\linewidth]{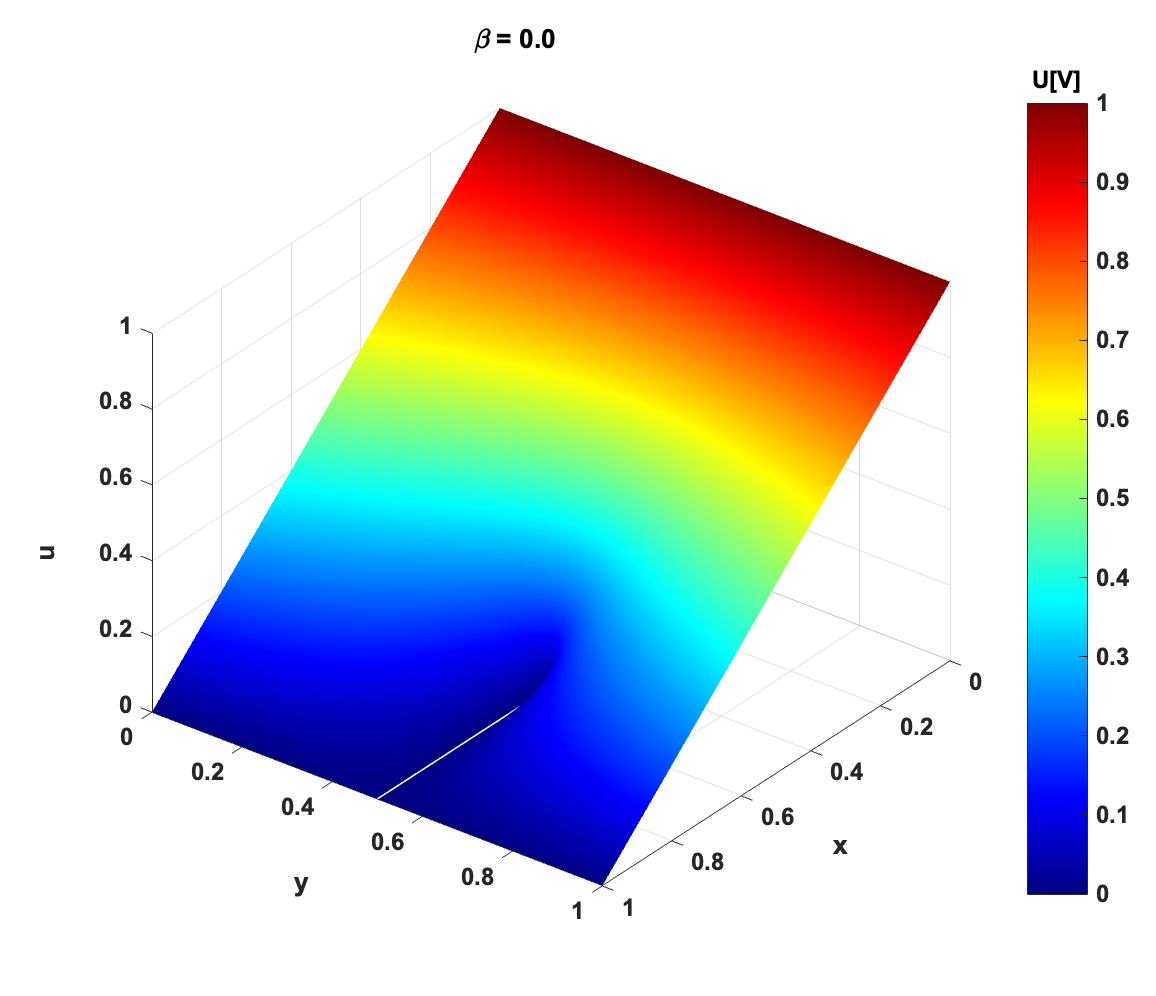} 
    \end{subfigure}
\caption{Profiles of DG solution for parameters $\sigma=10^4, \alpha=2.0,\, \beta=2.0, \, \mu=1.0$ and the approximate solution with $\beta=0.0$.}  \label{dgsolsigex2p3}
\end{figure}
%////////// Final Figures
%%%%%%%%%%%%%%%%%%%%%
\begin{figure}[H] 
    \centering
\begin{subfigure}[b]{0.49\textwidth}
\centering
\includegraphics[width=1.1\linewidth]{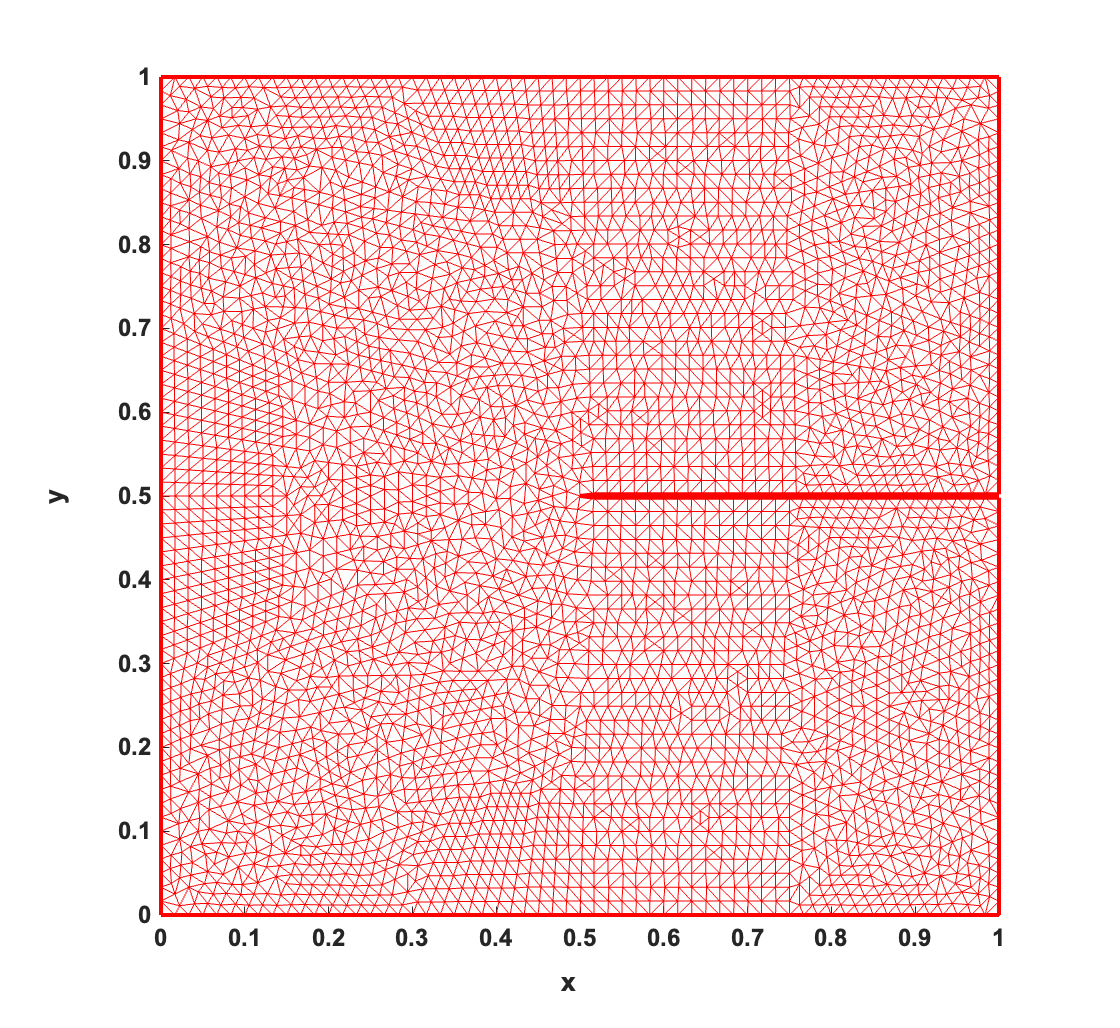} 
    \end{subfigure}
 \hfill
    \begin{subfigure}[b]{0.49\textwidth}
    \centering
 \includegraphics[width=1.12\linewidth]{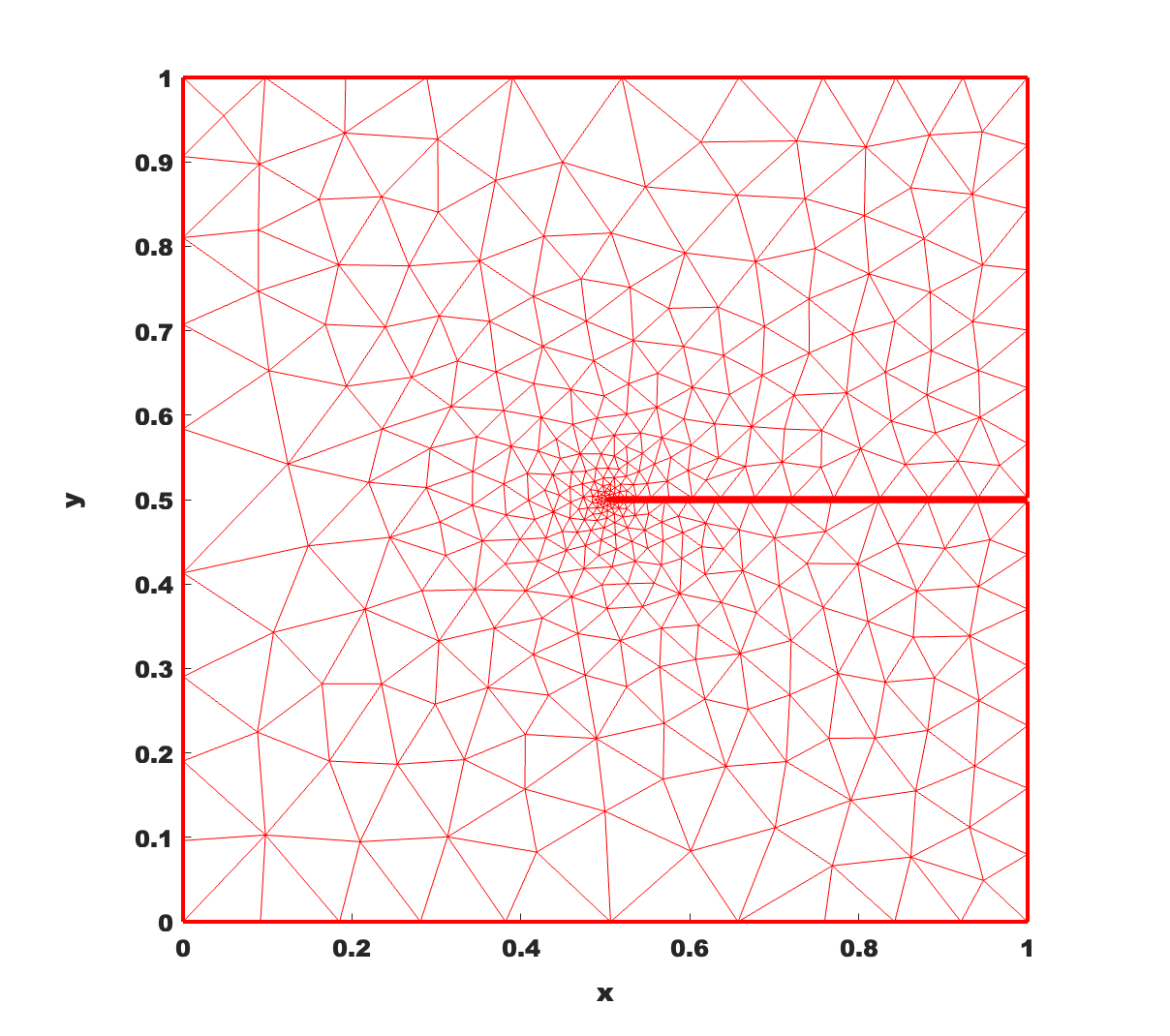} 
    \end{subfigure}
\caption{Uniform mesh with size $30\times 30$ and the corresponding Adaptive meshes.} \label{Ex2bt=0.0}\vspace{1.3cm}
%  \end{figure}
%  %%%%%%%%%%%%%%%%%%%%%%%%
% %%%%%%%%%%%%    figure-1 for Example -2
% \begin{figure}[H] 
\centering
\begin{subfigure}[b]{0.49\textwidth}
\centering
\includegraphics[width=1.1\linewidth]{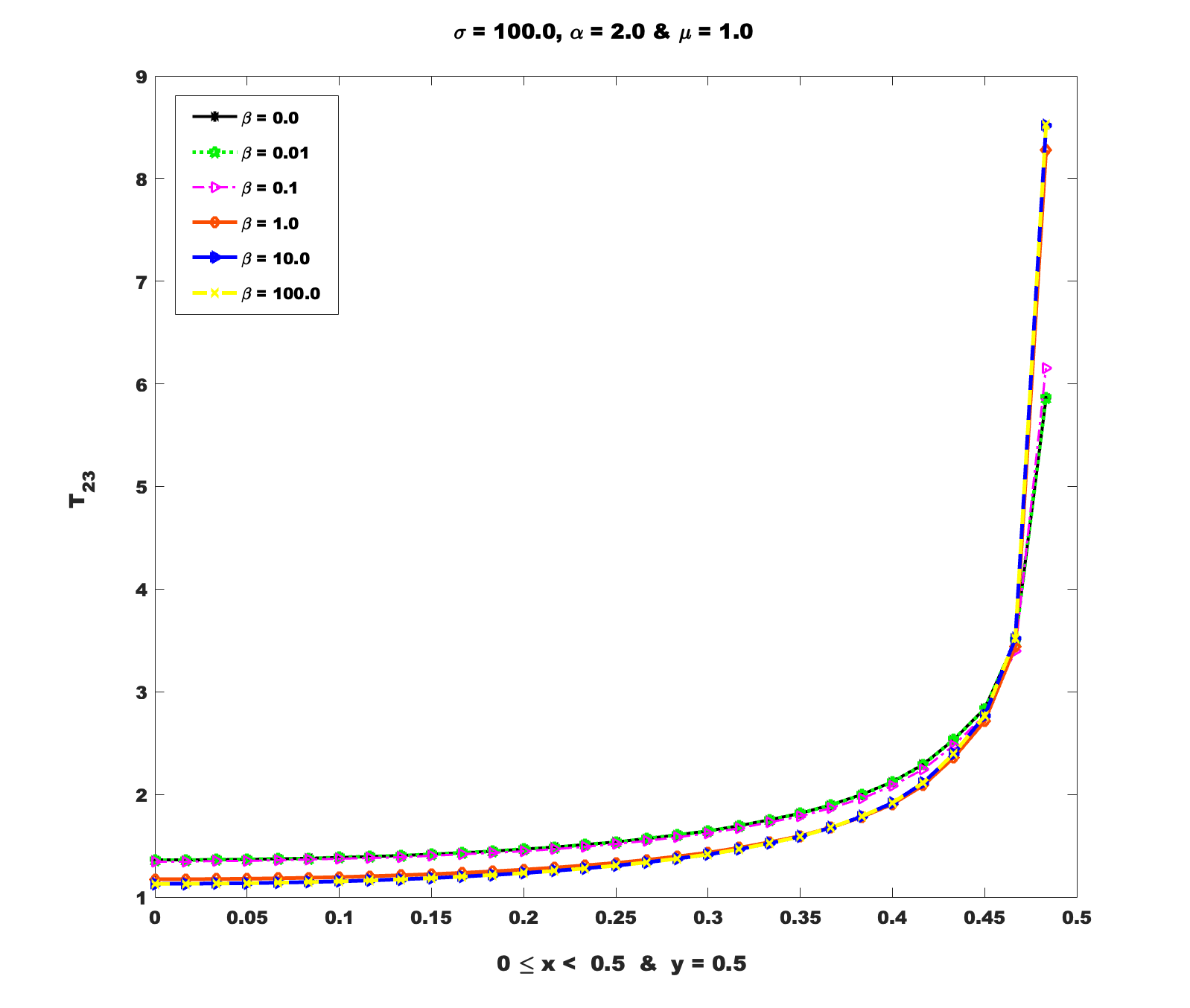} 
    \end{subfigure}
    \hfill
    \begin{subfigure}[b]{0.49\textwidth}
    \centering
 \includegraphics[width=1.1\linewidth]{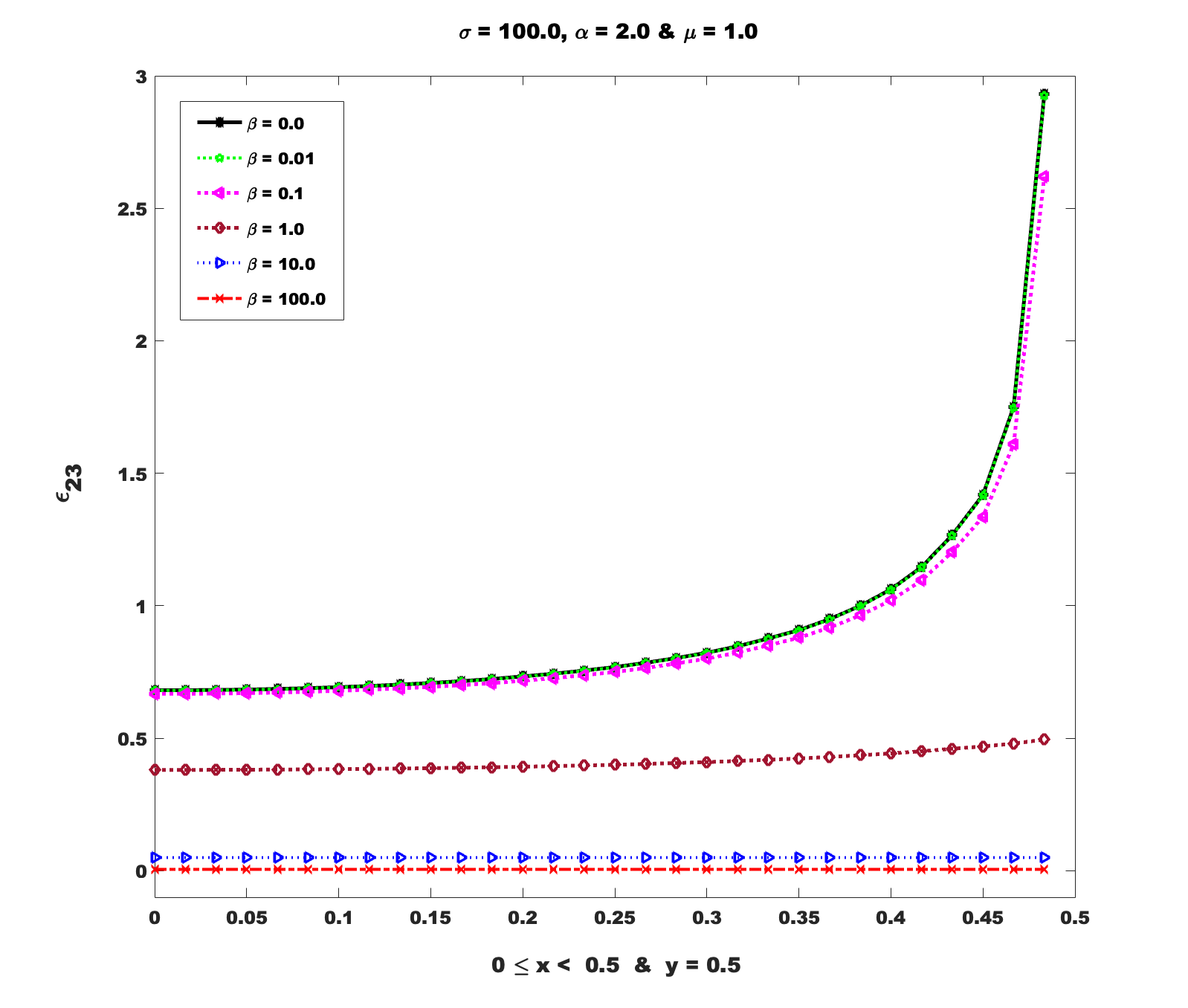} 
    \end{subfigure}
\caption{The stress  $T_{23}$  and strain $\epsilon_{23}$ for $\beta=0.0, \,0.01,\, 0.1,\, 1.0,\, 10\,\& \, 100$ computed on $0\leq x<0.5$ and $y=0.5$.}\label{Ex2T23E23betas}  \vspace{1.3cm}
 \end{figure}
% % %%%%%%%%%% %%%%%%%%%%%%  
%%%%%%%%%%%%%%  ///////////////  %%%%%%%%%% EOC  Figure for Example-2
%%%%%%%%%%  //////////////////////// 
%%%%%%%%%%%%%%%%%%%%%%%%%%%%%%%%%%%%%%%%%%%%%%%%%%%%%
\section{Conclusion} 
In this study, we have addressed the issue of devising an $hp$-local discontinuous Galerkin finite element to approximate the solution to a quasi-linear elliptic boundary value problem. Such a problem arises in modeling the response of a geometrically linear elastic body using an algebraically nonlinear constitutive relationship. The proposed response function is monotone and Lipschitz continuous. Our continuous DG formulation is well-posed, and further, for the discrete problem, we utilize the Ritz representation theory to show the existence of a unique solution. Then, the local polynomial degree $n_i$ for each triangle $\tau_i$, $\tau_i\in \mathcal{T}_h$, is enforced to approximate the solution variable for the discrete problem.
We also derive the apriori error estimates in the energy and $L^2$ norms. For the sufficient small $h$, $\mathtt{m}\geq 2$ and $\gamma=d-1$ with $d\geq2$, the derived error estimates are optimal in $h$  and suboptimal in $\mathtt{n}$ (total degree of polynomial).  The numerical test presented for a case of manufactured solution delineates the performance of the proposed locally adaptive DG scheme. The method presented in this paper can be easily used to characterize the crack-tip stress and strain in the material body under anti-plane shear containing a single crack. We have found that the stress is maximum along a line leading to the crack tip, but the strain growth is insignificant. Therefore, one can utilize the crack-tip stresses to define evolution criteria to study quasi-static and dynamic propagation problems in this class of new material models. Such an investigation is an automatic extension of the current research work. 
%%%%%%%%%%%%%%%%%%%%%%%%%%%%%%%%%%%%%%%%%%%%%%%%
\section{Acknowledgement.}
This material is based upon work supported by the National Science Foundation under Grant No. 2316905. 
\bibliographystyle{plain}
\bibliography{references}
\end{document}